\def\z{{\bf z}}
\newtheorem{theorem}{Theorem}[section]
\newtheorem{lemma}[theorem]{Lemma}
\newtheorem{definition}[theorem]{Definition}
\newtheorem{proposition}[theorem]{Proposition}
\newtheorem{remark}[theorem]{Remark}
\newtheorem{example}[theorem]{Example}
\newtheorem*{theorem*}{\it Theorem}
\def\vint_#1{\mathchoice%
          {\mathop{\kern 0.2em\vrule width 0.6em height 0.69678ex depth -0.58065ex
                  \kern -0.8em \intop}\nolimits_{\kern -0.4em#1}}%
          {\mathop{\kern 0.1em\vrule width 0.5em height 0.69678ex depth -0.60387ex
                  \kern -0.6em \intop}\nolimits_{#1}}%
          {\mathop{\kern 0.1em\vrule width 0.5em height 0.69678ex
              depth -0.60387ex
                  \kern -0.6em \intop}\nolimits_{#1}}%
          {\mathop{\kern 0.1em\vrule width 0.5em height 0.69678ex depth -0.60387ex
                  \kern -0.6em \intop}\nolimits_{#1}}}
\def\vintslides_#1{\mathchoice%
          {\mathop{\kern 0.1em\vrule width 0.5em height 0.697ex depth -0.581ex
                  \kern -0.6em \intop}\nolimits_{\kern -0.4em#1}}%
          {\mathop{\kern 0.1em\vrule width 0.3em height 0.697ex depth -0.604ex
                  \kern -0.4em \intop}\nolimits_{#1}}%
          {\mathop{\kern 0.1em\vrule width 0.3em height 0.697ex depth -0.604ex
                  \kern -0.4em \intop}\nolimits_{#1}}%
          {\mathop{\kern 0.1em\vrule width 0.3em height 0.697ex depth -0.604ex
                  \kern -0.4em \intop}\nolimits_{#1}}}
\def\R{\mathbb R}
\numberwithin{equation}{section}
\def\1{\raisebox{2pt}{\rm{$\chi$}}}
\newcommand{\Z}{{\mathbb Z}}
\definecolor{violet(ryb)}{rgb}{0.53, 0.0, 0.69}
\begin{document}

\title[Two for sandpiles in weighted graphs]
{\bf  Two models for
sandpile growth  in  weighted graphs}

\author{ J. M. Maz\'{o}n and J. Toledo}

\address{ Jos\'{e} M. Maz\'{o}n
\hfill\break\indent Departament d'An\`{a}lisi Matem\`atica,
Universitat de Val\`encia \hfill\break\indent Valencia, Spain.}
\email{{\tt  mazon@uv.es}}

\address{ Julian Toledo
\hfill\break\indent Departament d'An\`{a}lisi Matem\`atica,
Universitat de Val\`encia \hfill\break\indent Valencia, Spain.}
\email{{\tt toledojj@uv.es}}

\date{}

\keywords{Weighted graphs, $p$-Laplacian, $\infty-$Laplacian, sandpiles, mass transport theory.\\
\indent 2020 {\it Mathematics Subject Classification.} 35R02, 47H05, 47H06, 35B40.}

% 35B40= Asymptotic behavior of solutions.
% 45A05 Linear integral equations
% 45M05 Asymptotics

\begin{abstract}
In this paper we study   $\infty$-Laplacian type diffusion equations in weighted graphs obtained
as  limit as $p\to \infty$  to two types of $p$-Laplacian evolution equations  in such graphs. We propose these diffusion equations, that are governed by the subdifferential of a convex energy functionals associated to the indicator function of the set
$$K^G_{\infty}:= \left\{ u \in  L^2(V, \nu_G) \ : \ \vert u(y) - u(x) \vert \leq 1 \ \ \hbox{if} \ \ x \sim y  \right\}$$
and the set
$$K^w_{\infty}:= \left\{ u \in  L^2(V, \nu_G) \ : \ \vert u(y) - u(x) \vert \leq \sqrt{w_{xy}} \ \ \hbox{if} \ \ x \sim y  \right\}$$
as   models for  sandpile growth in weighted graphs.
Moreover, we also analyse the collapse of the initial condition
when it does not belong to  the stable sets $K^G_{\infty}$ or $K^w_{\infty}$ by means of an abstract result given in~\cite{BEG}.
We give an interpretation of the limit problems in terms
of Monge-Kantorovich mass transport theory. Finally, we give some explicit solutions of simple examples that illustrate the dynamics of the sandpile growing or collapsing.

\end{abstract}

\maketitle

%%%%%%%%%%%%%%%%%%%%%%%%%%%%%%%%% FECHA %%%%%%%%%%%%%%%%%%%%%%%%%%%%%%%%%%%%%%%%%%
%\vspace{-0.5cm}
%\begin{center}\large \today
%\end{center}
%
%\vspace{-0.5cm}
%%%%%%%%%%%%%%%%%%%%%%%%%%%%%%%%%%%%%%%%%%%%%%%%%%%%%%%%%%%%%%%%%%%%%%%%%%%%%%%%%%

\section{Introduction}

Our aim is to study the limit, as $p\to \infty$, of $p$-Laplacian evolution problems in the framework of the weighted graphs
and to interpret and propose the limit problems,  a sort of $\infty$-Laplacian type diffusion problems, as sandpile models in weighted graphs. 
This proposal is inspired by the  model proposed by  Evans and Rezakhanlou in \cite{ER} which is formulate in the lattice $\Z^n \subset \R^n$ and describes a kind of stochastic  microscopic particle model for the macroscopic sandpile dynamics introduced by  Prigozhin \cite{P1}, \cite{P2}, by Aronsson, Evans and  Wu \cite {AEW} and by Evans, Feldman and Gariepy \cite{EFG}.

 The physicists Bak, Tang, and Wiesenfeld \cite{BTW} created an idealized version of a sandpile in which sand is
stacked on the vertices of a graph and is subjected to certain avalanching rules. They used the model as
an example of what they called self-organized criticality. The abelian sandpile model is a variation, due
to the physicist Deepak Dhar in 1990 \cite{Dhar}, in which the avalanching obeys a useful commutativity rule.
There is a abundant literature on sandpile models in discrete graphs much of it relating to the abelian sandpile model or chip-firing game model (see for instance \cite{J}, \cite{JP}, \cite{R}) but in this case the discrete graphs are regular, that is all the weights are equal, in many case the graphs $\Z^N$ are considered. Here we consider  different sandpile models and in general weighted graphs in which the   weights are relevant.

 We now recall some results from~\cite{AEW} and~\cite{EFG} (see also~\cite{Evans}) since we adapt the same procedure for the results presented in the framework of weighted graphs. In such references it was
investigated the limiting behaviour as $p \to \infty$ of the solutions
to the quasilinear parabolic problem
\begin{equation}\label{p_Lap1.intro}
P_{p}(u_0,f)\quad\left\{\begin{array}{ll} v_{p,t}- \Delta_p v_p = f
  &\hbox{in } ]0,T[\times\R^N,
\\[10pt]
v_p(0,x)=u_0(x)     &\hbox{in } \R^N,
\end{array}\right.
\end{equation}
where $\Delta_p u = {\rm div} \, (\vert \nabla u \vert^{p-2} \nabla u )$ and   $f$ is a nonnegative function which
is interpreted physically as  a source term that adds material to the evolving
system  within which mass particles are continually rearranged by
diffusion. By considering the functional
\begin{equation}\label{e2Mazo.intro}
F_p(v) = \left\{ \begin{array}{ll} \displaystyle\frac{1}{p} \int_{\R^N} |\nabla v(y)|^{p}
\, dy     & \hbox{if} \   u
\in L^2(\R^N) \cap W^{1,p}(\R^N), \\[10pt]
+ \infty   &
\hbox{if} \   u\in L^2(\R^N) \setminus  W^{1,p}(\R^N), \end{array}
\right.
\end{equation}
$1<p<\infty$,  the PDE problem $P_{p}(u_0,f)$ has the standard
reinterpretation
$$
\left\{\begin{array}{ll} f(t) - v_{p,t} = \partial F_p( v_p(t)),    &\hbox{ a.e.}   \ t \in
]0,T[,
\\[10pt]
v_p(0,x)=u_0(x)    &\hbox{ in } \R^N.
\end{array}\right.
$$
In \cite{AEW}, assuming that $u_0$ is a Lipschitz function with compact support,
satisfying
$$ \Vert \nabla u_0 \Vert_{\infty} \leq 1,$$
and for $f$ a smooth nonnegative function with compact support in
$[0,T]
\times \R^N$, it was proved that there exists a sequence $p_i \to
+ \infty$ and   a limit function $v_{\infty}$  such that, for
each $T > 0$,
$$
\left\{\begin{array}{ll}  v_{p_i} \to v_{\infty}, \quad &\hbox{ a.e. and in} \ L^2(\R^N \times
]0,T[),
\\[10pt]
\nabla v_{p_i} \rightharpoonup \nabla v_{\infty}, \ v_{p_i,t}\rightharpoonup
v_{\infty, t} \quad &\hbox{ weakly in } \ L^2(\R^N \times ]0,T[),
\end{array}\right.
$$
and moreover the limit function $v_{\infty}$ satisfies
$$
P_{\infty}(u_0,f)\quad\left\{\begin{array}{ll} f(t) - v_{\infty,t}
\in
\partial F_\infty ( v_{\infty}(t)), \quad &\hbox{ a.e.} \ \ t \in
]0,T[,
\\[10pt]
v_{\infty}(0,x)=u_0(x), \quad &\hbox{ in } \R^N,
\end{array}\right.
$$
where
$$
F_\infty (v) = \left\{ \begin{array}{ll} 0 \qquad & \hbox{if} \ v\in L^2(\R^N),\ \vert
\nabla
v \vert \leq 1 ,\\[10pt]
+ \infty \qquad & \hbox{in other case}. \end{array}\right.
$$
This limit problem $P_{\infty}(u_0,f)$ was understood as a model that explains the growth of a
sandpile ($v_\infty (t,x)$ describes the amount of the sand at the
point $x$ at time $t$) under the action of the source term $f$,  the main assumption being that the slope of the
sandpile must be less or equal than $1$   ($|Dv_\infty|\le 1$).
In \cite{EFG}  (see also~\cite{BEG}) it was studied the collapsing of the initial
condition phenomena for the local problem $P_{p}(u_0,0)$ when the
initial condition $u_0$ satisfies $\Vert \nabla u_0 \Vert_{\infty}
> 1$. It was proved that the limit of the solutions $v_p (t,x)$ to
$P_{p}(u_0,0)$, as $p\to\infty$, is  an stable configuration  independent of time.
And it was described the small layer in which the solution
rapidly changes from being $u_0$ at an initial time to
the final stationary limit. Similar problems in $\mathbb{R}^N$ under nonlocal diffusion driven by a regular kernel were studied in~\cite{AMRTsp} (see also~\cite{AMRTams}).

A weighted graph is defined as a special type of graph in which the edges are assigned some weights which represent cost, distance, and many other measuring units. Weighted graphs are an accurate representation of many real-world scenarios, where the relationships between entities have varying degrees of importance.
On the other hand, one can find in the literature different definitions of   $p$-Laplacian type operator in weighted graphs.  We focus our attention on two that are typically used.
 For each of these $p$-Laplacian operators
we will study similar problems to~\eqref{p_Lap1.intro}, and take limits as $p\to\infty$ to get different evolution problems  for each of them  that can be seen as sandpile growing models.
 More concretely, consider a connected weighed graph with weights $w_{xy}>0$ between   related vertices $x\sim y$ ($w_{xy}=0$ otherwise) and weighted degree $d_x=\sum_{y\sim x}w_{xy}$ on each vertex $x$ (see more details later on). Starting with  the $p$-Laplacian given by
$$\Delta_p^G u (x) =    \frac{1}{d_x} \sum_{y \in V} \vert \nabla u(x,y) \vert^{p-2} \nabla u(x,y) w_{xy},$$
 we arrive in the limit to the  problem
\begin{equation}\label{NOnLOpiii}%
     \left\{\begin{array}{l} \displaystyle f(t,
\cdot)- u_t
(t,.) \in \partial I_{K^G_{\infty}} (u(t,.)), \ \ \ \hbox{ a.e.} \ t \in ]0,T[, \\[10pt]
u(0,x)=u_0 (x),
\end{array}\right.
\end{equation}
where
$$K^G_{\infty}= \left\{ u \in L^2(V, \nu_G) \ : \ \vert u(y) - u(x) \vert \leq 1 \ \ \hbox{if} \ \ x \sim y  \right\},$$
that can be seen as a sandpile growing model ($u(t,x)$ represents the height of the sand at the vertex $x$ at time $t$) in which the
sandpile is growing on the vertices $x$ where $f(t,.)>0$ while   the   slope constraint condition  $\vert u(t, y) - u(t, x) \vert \leq 1$ if $y\sim x$; now once the slope  condition may be exceeded, the sandpile must growth on $y\sim x$ (in order to preserve such constraint), and with the same argument in other vertex $z\sim y$, and so on. The set $K^G_{\infty}$ is the set of  stable configurations. Here  the weighted degrees play  a role in the growth dynamics.    On the other hand, with the  $p$-Laplacian given by
$$\Delta_p^w u (x) =      \frac{1}{d_x} \sum_{y \in V} \left(\sqrt{w_{xy}}\right)^{p-2}\vert \nabla u(x,y) \vert^{p-2} \nabla u(x,y)w_{xy},$$
 we arrive in the limit to the problem
\begin{equation}\label{NOnLOppesadoiii}
 \left\{\begin{array}{l} \displaystyle f(t,
\cdot)- u_t
(t) \in \partial I_{K^w_{\infty}} (u(t)), \ \ \ \hbox{ a.e.} \ t \in ]0,T[, \\[10pt]
u(0,x)=u_0 (x).
\end{array}\right.
\end{equation}
where
$$K^w_{\infty}= \left\{ u \in L^2(V, \nu_G), \ \vert u(y)-u(x) \vert\leq \frac{1}{\sqrt{w_{xy}}}  \quad \hbox{if} \ x \sim y\right\},$$
that can be seen as model for sandpile growing  in which the
slope constraint is  $\vert u(t, y) - u(t, x) \vert \leq \frac{1}{\sqrt{w_{xy}}}$, so in this case the weights of edges directly play  a role in the dynamics and in stability.

 The slope constraint is the main  factor in the sandpile evolution models proposed. It determines how the configuration $u(t,.)$ at vertices is under the action of a source term.

The $p$-Laplacian evolution problems and their limits as $p\to\infty$ are studied in Sections~\ref{p-LaplacianE}, \ref{sect.covergtop} and \ref{p-LaplacianEpesado}, \ref{sect.covergtoppesado}.  We also study the corresponding collapsing models  under the action of an unstable configuration at Sections~\ref{Sect.collapsing} and ~\ref{Sect.collapsingW}.  We   describe the sandpile models from the point of view of mass transport theory (Sections~\ref{Sect.Mass.Transfer} and~\ref{Sect.Mass.TransferW}). Concrete simple examples are also given in order to illustrate the dynamics involved (Section~\ref{sect.explicit} and~\ref{sect.explicitpesado}).

\section{Preliminaries}\label{sect.prelim}

As for the local case, to identify the limit of the solutions to the $p$-Laplacian evolution problem that we will consider we   use
methods of convex analysis  and nonlinear semigroup theory. So, we  first recall some terminology (see \cite{ET},
\cite{Brezis} and \cite{Attouch}) and introduce known results that we need.

If $H$ is a real Hilbert space with inner product $( \ , \ )$ and
$\Psi : H \rightarrow (- \infty, + \infty]$ is convex, then the
subdifferential of $\Psi$ is defined as the multivalued operator
$\partial \Psi$ given by
$$v \in \partial \Psi(u) \ \iff \ \Psi(w) - \Psi(u) \geq (v, w -u)
\ \ \ \forall \, w \in H.$$

Given $K$  a closed convex subset of $H$, the indicator function of $K$ is defined by
$$
I_K(u) = \left\{ \begin{array}{ll} 0 \qquad & \hbox{if} \ \ u \in K
,\\[10pt]
+ \infty
\qquad & \hbox{if} \ \ u \not\in K. \end{array}\right.
$$
It is easy to see that
\begin{equation} \label{e1Mazo}v \in \partial I_K(u) \
\iff \ u \in K \ \ \hbox{and} \ \ (v, w - u ) \leq 0 \ \ \ \forall \, w \in K.
\end{equation}

 In the case that the convex functional  $\Psi : H \rightarrow (- \infty, + \infty]$ is
 proper, lower semi\-continuous  and $\min \Psi=0$, it is well known (see \cite{Brezis}) that the abstract Cauchy
 problem
 \begin{equation} \label{ACP}
\left\{\begin{array}{ll} u^{\prime}(t) + \partial \Psi (u(t)) \ni
f(t), \quad &\hbox{ a.e} \ t \in ]0,T[,
\\[10pt]
u(0)=u_0,&
\end{array}\right.
\end{equation}
has a unique strong solution for any $f \in L^2(0,T;H)$ and $u_0
\in \overline{D(\partial \Psi)}$.

Suppose $X$ is a metric space and $A_n
\subset X$. We define
$$\liminf_{n \to \infty} A_n = \{ x \in X \ : \ \exists x_n \in A_n, \ x_n \to x \} $$
and
$$\limsup_{n \to \infty} A_n = \{ x \in X \ : \ \exists x_{n_k} \in A_{n_k}, \ x_{n_k} \to x \}.$$
In the case $X$ is a normed space, we note by $s$-$\lim$ and
$w$-$\lim$ the above limits associated respectively to the strong
and to the weak topology of $X$.

The {\it epigraph} of a functional $\Psi: H \rightarrow (- \infty, + \infty]$ is defined as
$$ {\rm Epi}(\Psi) := \{(u, \lambda) \in H \times \R \ : \  \lambda \ge \Psi(u)  \}.$$ The following convergence was studied by Mosco in \cite{Mosco}
(see \cite{Attouch}).
 Given a sequence $\Psi_n, \Psi : H
\rightarrow (- \infty, + \infty]$ of convex lower semicontinuous
functionals, we say that $\Psi_n$ converges to $\Psi$ in the sense
of Mosco  in $H$  if
\begin{equation}\label{e1Mos}
w\hbox{-}\limsup_{n \to \infty} {\rm Epi}(\Psi_n) \subset {\rm Epi}(\Psi)
\subset s\hbox{-}\liminf_{n \to \infty}{\rm Epi}(\Psi_n).
\end{equation}
It is easy
to see that \eqref{e1Mos} is equivalent to the two following conditions:
\begin{equation}\label{e2Mos}
\forall \, u \in D(\Psi) \ \ \exists u_n \in D(\Psi_n) \ : \ u_n \to u \ \ \hbox{and} \ \
\Psi(u) \geq \limsup_{n \to \infty} \Psi_n(u_n);
\end{equation}
\begin{equation}\label{e3Mos}
\hbox{for every subsequence} \ n_k, \ \hbox{when} \ u_{k}\rightharpoonup u, \hbox{ it
holds} \ \Psi(u) \leq \liminf_{k} \Psi_{n_k}(u_k).
\end{equation}

As consequence of results in \cite{BP} and \cite{Attouch}   the following
result hodls:

\begin{theorem}\label{convergencia1.Mosco} Let $\Psi_n, \Psi : H
\rightarrow (- \infty, + \infty]$  convex lower semicontinuous
functionals. Then the following statements are equivalent:
\begin{itemize}
\item[(i)] $\Psi_n$ converges to $\Psi$ in the sense
of Mosco  in $H$.

\medskip

\item[(ii)] $(I + \lambda \partial \Psi_n)^{-1} u \to (I + \lambda \partial \Psi)^{-1}
u$, \ \ $\forall \, \lambda > 0, \ u \in H$.
\end{itemize}

Moreover, any of these two conditions $(i)$ or $(ii)$ implies that
\begin{itemize}
\item[(iii)] for every $u_0 \in \overline{D(\partial \Psi)}$ and
$u_{0,n} \in \overline{D(\partial \Psi_n)}$ such that $u_{0,n} \to u_0$, and every $f_n,
f \in L^2 (0,T; H)$   with $f_n \to f$, if $u_n(t)$, $u(t)$ are the strong solutions of the
abstract Cauchy problems
$$
\left\{\begin{array}{ll} u_n^{\prime}(t) + \partial \Psi_n
(u_n(t)) \ni f_n, \quad &\hbox{ a.e.} \ \ t \in ]0,T[,
\\[10pt]
u_{n}(0)=u_{0,n},&
\end{array}\right.  $$
and
$$
\left\{\begin{array}{ll} u^{\prime}(t) + \partial \Psi (u(t)) \ni
f, \quad &\hbox{ a.e.} \ \ t \in ]0,T[,
\\[10pt]
u(0)=u_0,&
\end{array}\right.  $$
respectively, then $$u_n \to u \qquad \mbox{ in } \ C([0,T]: H).$$
\end{itemize}
\end{theorem}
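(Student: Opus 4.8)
The plan is to prove the equivalence (i) $\iff$ (ii) by the classical characterization of Mosco convergence in terms of the Moreau--Yosida approximants and their minimizers (the resolvents), and then to derive (iii) from (ii) by the nonlinear semigroup stability result of Brezis and Pazy. Throughout I write $J^n_\lambda = (I + \lambda \partial \Psi_n)^{-1}$ and $J_\lambda = (I + \lambda \partial \Psi)^{-1}$, and recall that $J^n_\lambda u$ is the unique minimizer of $v \mapsto \Psi_n(v) + \frac{1}{2\lambda}\|v - u\|^2$, with the analogous statement for $J_\lambda u$; these minimizers exist and are unique since $\Psi_n,\Psi$ are proper, convex and lower semicontinuous.

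For (i) $\Rightarrow$ (ii), fix $\lambda > 0$ and $u \in H$ and set $w_n := J^n_\lambda u$, $w := J_\lambda u$. First I would show that $\{w_n\}$ is bounded: testing the minimality of $w_n$ against a recovery sequence for a fixed point of $D(\Psi)$ supplied by \eqref{e2Mos}, together with the coercivity of the quadratic term $\frac{1}{2\lambda}\|\cdot - u\|^2$, gives a uniform bound. Extract a subsequence $w_{n_k} \rightharpoonup w^\ast$. The liminf inequality \eqref{e3Mos} together with weak lower semicontinuity of the norm yields
$$\Psi(w^\ast) + \frac{1}{2\lambda}\|w^\ast - u\|^2 \le \liminf_k \Big(\Psi_{n_k}(w_{n_k}) + \frac{1}{2\lambda}\|w_{n_k} - u\|^2\Big).$$
On the other hand, taking a recovery sequence $v_n \to w$ with $\limsup_n \Psi_n(v_n) \le \Psi(w)$ from \eqref{e2Mos} and comparing with the minimality of $w_{n_k}$ gives the reverse bound
$$\limsup_k \Big(\Psi_{n_k}(w_{n_k}) + \frac{1}{2\lambda}\|w_{n_k} - u\|^2\Big) \le \Psi(w) + \frac{1}{2\lambda}\|w - u\|^2.$$
Since $w$ is the unique minimizer of the limit functional, the two displays force $w^\ast = w$ and equality of the objective values along the subsequence; in particular $\|w_{n_k} - u\| \to \|w - u\|$, which upgrades the weak convergence to $w_{n_k} \to w$ strongly. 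Uniqueness of the limit then promotes this to convergence of the whole sequence, which is (ii).

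For the converse (ii) $\Rightarrow$ (i), I would run the argument backwards through the Moreau--Yosida regularizations, using the identity $\Psi_{n,\lambda}(u) = \Psi_n(J^n_\lambda u) + \frac{1}{2\lambda}\|u - J^n_\lambda u\|^2$: resolvent convergence gives $\Psi_{n,\lambda}(u) \to \Psi_\lambda(u)$ for each $\lambda > 0$ and $u$, from which \eqref{e2Mos} and \eqref{e3Mos} follow by the standard arguments relating Moreau--Yosida convergence to Mosco convergence (recall $\Psi_\lambda(u) \uparrow \Psi(u)$ as $\lambda \downarrow 0$), so that I would cite \cite{Attouch} for this step. Finally, for (ii) $\Rightarrow$ (iii), with resolvent convergence in hand the operators $A_n = \partial \Psi_n$ and $A = \partial \Psi$ are maximal monotone with $J^n_\lambda u \to J_\lambda u$ for all $\lambda > 0$ and $u \in H$; the Brezis--Pazy theorem \cite{BP} then yields convergence of the solutions of the associated Cauchy problems. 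Concretely, the solutions are limits of the implicit Euler (resolvent) schemes, and the contraction estimates for these schemes, which are stable under $u_{0,n} \to u_0$ and $f_n \to f$ in $L^2(0,T;H)$, propagate the resolvent convergence to convergence $u_n \to u$ in $C([0,T];H)$.

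The main obstacle is the strong convergence of the resolvents in (i) $\Rightarrow$ (ii): the Mosco conditions give only a weak subsequential limit, and the crucial upgrade to strong convergence rests on matching the $\limsup$ coming from the recovery sequence with the $\liminf$ coming from weak lower semicontinuity, so as to force convergence of norms. For (iii) the delicate point is the presence of the time-dependent forcing terms $f_n \to f$, which blocks a direct appeal to the homogeneous Crandall--Liggett formula and instead requires the full stability estimates of nonlinear semigroup theory; both of these difficulties are precisely what \cite{Attouch} and \cite{BP} are invoked to resolve.
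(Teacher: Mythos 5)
The paper does not prove this theorem at all: it is stated as a direct consequence of results in \cite{BP} and \cite{Attouch}, with no argument given. Your sketch is a correct reconstruction of the standard proofs behind those citations, and in particular your treatment of (i) $\Rightarrow$ (ii) --- recovery sequence for the $\limsup$, condition \eqref{e3Mos} plus weak lower semicontinuity for the $\liminf$, matching the two to force convergence of norms and hence strong convergence of the resolvents --- is exactly the classical argument of Attouch, while the passage to (iii) is the Brezis--Pazy stability theorem, which is precisely what the paper invokes. Two places where your sketch is thinner than what the references actually supply: the uniform bound on $w_n = J^n_\lambda u$ does not follow from ``coercivity of the quadratic term'' alone, since $\Psi_n(w_n)$ could a priori be very negative; one should instead use the Euler equation $\lambda^{-1}(u - w_n) \in \partial\Psi_n(w_n)$ together with the subgradient inequality at the recovery points $v_n$ (or a uniform affine minorant) to get $\|w_n - u\| \le C\|v_n - u\|$. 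And in the direction (ii) $\Rightarrow$ (i), resolvent convergence gives convergence of the minimizers but not by itself of the minimum values $\Psi_{n,\lambda}(u)$, so the full equivalence in \cite{Attouch} carries a normalization condition that your sketch (and, to be fair, the paper's own statement) elides; since the paper only ever uses (i) $\Rightarrow$ (ii) and (iii), this does not affect anything downstream.
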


Let us also collect some preliminaries and notations concerning
completely accretive operators that will be used afterwards (see
\cite{BCr2}). Let $(\Sigma, \mathcal{B}, \mu)$ be a $\sigma$-finite
measure space, and $M(\Sigma,\mu)$ the space of $\mu$-a.e. equivalent
classes of measurable functions $u : \Sigma\to \R$.
Let
  \begin{displaymath}
    J_0:= \Big\{ j : \R \rightarrow
    [0,+\infty] : \text{$j$ is convex, lower
      semicontinuous, }j(0) = 0 \Big\}.
  \end{displaymath}
For every $u$, $v\in M(\Sigma,\mu)$, we write
  \begin{displaymath}
  u\ll v \quad \text{if and only if}\quad \int_{\Sigma} j(u)
  \,d\mu \le \int_{\Sigma} j(v) \, d\mu\quad\text{for all $j\in J_{0}$.}
\end{displaymath}

\begin{definition}{\rm
  An operator $A$ on $M(\Sigma,\mu)$ is called {\it completely
    accretive} if for every $\lambda>0$ and
  for every $(u_1,v_1)$, $(u_{2},v_{2}) \in A$ and $\lambda >0$, one
  has that
  \begin{displaymath}
    u_1 - u_2 \ll u_1 - u_2 + \lambda (v_1 - v_2).
  \end{displaymath}
  If $X$ is a linear subspace of $M(\Sigma,\mu)$ and $A$ an operator
  on $X$, then $A$ is {\it $m$-completely accretive on $X$} if $A$ is
  completely accretive and satisfies the {\it range
  condition}
\begin{displaymath}
  \textrm{Ran}(I+\lambda A)=X\qquad\text{for some (or equivalently, for
    all) $\lambda>0$.}
\end{displaymath}
}
\end{definition}

We denote
\begin{displaymath}
  L_0(\Sigma,\mu):= \left\{ u \in M(\Sigma,\mu) \ : \
    \int_{\Sigma} \big[\vert u \vert
   - k\big]^+\, d\mu < \infty \text{ for all $k > 0$} \right\}.
\end{displaymath}
The following results were proved in \cite{BCr2}.

\begin{proposition}
  \label{prop:completely-accretive}
   Let $P_{0}$ denote the set of all functions $q\in C^{\infty}(\R)$
   satisfying $0\le T'\le 1$,
    $q'$  is compactly supported, and $0$ is not contained in the
   support ${\rm supp}(q)$ of $q$. Then,
   an operator $A \subseteq L_{0}(\Sigma,\mu)\times
  L_{0}(\Sigma,\mu)$ is  completely accretive if and only if
   \begin{displaymath}
     \int_{\Sigma}q(u-\hat{u})(v-\hat{v})\, d\mu \ge 0
   \end{displaymath}
   for every $q\in P_{0}$ and every $(u,v)$, $(\hat{u},\hat{v})\in A$.
 \end{proposition}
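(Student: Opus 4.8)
The plan is to fix a pair of elements $(u,v),(\hat u,\hat v)\in A$, to abbreviate $w:=u-\hat u$ and $g:=v-\hat v$, and to prove the equivalence
\[\Big(w\ll w+\lambda g \ \ \forall\,\lambda>0\Big)\iff\Big(\int_\Sigma q(w)\,g\,d\mu\ge 0\ \ \forall\,q\in P_0\Big).\]
The bridge between the two sides is that to each $q\in P_0$ one associates its primitive $j_q(r):=\int_0^r q(s)\,ds$. Since $q$ is nondecreasing (because $q'\ge0$), vanishes in a neighbourhood of $0$, and is constant outside a compact set (because $q'$ has compact support), the function $j_q$ is $C^\infty$, convex, satisfies $j_q(0)=0$, is flat around the origin, is affine at infinity, and is nonnegative; hence $j_q\in J_0$. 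Conversely, the functions whose derivative lies in $P_0$ form a subclass of $J_0$ that, as I explain below, is rich enough to test the relation $\ll$.

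For the implication ``completely accretive $\Rightarrow$ integral inequality'', I would fix $q\in P_0$ and consider
\[\phi(\lambda):=\int_\Sigma j_q\big(w+\lambda g\big)\,d\mu,\qquad \lambda\ge 0.\]
Complete accretivity says precisely $w\ll w+\lambda g$, i.e. $\phi(0)\le\phi(\lambda)$ for every $\lambda\ge0$, so $\phi$ attains its minimum on $[0,\infty)$ at $\lambda=0$ and therefore $\phi'(0^+)\ge0$. Differentiating under the integral sign gives $\phi'(0^+)=\int_\Sigma q(w)\,g\,d\mu$, which is the desired inequality. The differentiation is legitimate because $q=j_q'$ is bounded and vanishes near $0$, so the difference quotients are dominated by $\|q\|_\infty\,|g|\,\chi_{\{|w|\ge\delta\}}$ for a suitable $\delta>0$; this dominating function is integrable precisely by virtue of the definition of $L_0(\Sigma,\mu)$, and the same remark shows $\int_\Sigma q(w)\,g\,d\mu$ is finite.

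For the converse I would fix $\lambda>0$, set $z:=w+\lambda g$, and show $\int_\Sigma j(w)\,d\mu\le\int_\Sigma j(z)\,d\mu$ for every $j\in J_0$. First I treat the case $j'=q$ a positive multiple of an element of $P_0$: convexity of $j$ yields the pointwise bound $j(z)-j(w)\ge j'(w)(z-w)=\lambda\,q(w)\,g$, and integrating and invoking the hypothesis (after rescaling $q$ so that $0\le q'\le1$, which does not affect the sign of the integral) gives $\int_\Sigma j(z)\,d\mu-\int_\Sigma j(w)\,d\mu\ge\lambda\int_\Sigma q(w)\,g\,d\mu\ge0$. A general $j\in J_0$ is then handled by approximation: one builds an increasing sequence $j_n\uparrow j$ of such smooth functions (flatten $j$ on a shrinking neighbourhood of the origin, mollify to gain smoothness, and cap the slope far out so that $j_n$ becomes affine at infinity) and passes to the limit by monotone convergence, which applies on both sides because $0\le j_n\uparrow j$. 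This yields $w\ll z$ for every $\lambda>0$, i.e. complete accretivity.

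I expect the convex-analytic core — the one-line inequality $j(z)-j(w)\ge j'(w)(z-w)$ and the minimality argument $\phi'(0^+)\ge0$ — to be routine. The real work, and the main obstacle, lies in the two measure-theoretic points: first, the approximation lemma producing $j_n\uparrow j$ with $j_n'\in\R_{>0}\cdot P_0$ for an arbitrary convex lower semicontinuous $j$ with $j(0)=0$, where one must cope with corners, with possibly infinite slopes of $j$, and with the normalisation $j_n(0)=0$; and second, the justification that all the integrals above are finite and that differentiation under the integral sign is permitted, both of which hinge on the design of $L_0(\Sigma,\mu)$ together with the facts that the test functions $q$ are bounded and vanish near $0$. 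These are exactly the places where Proposition~\ref{prop:completely-accretive} uses more than pure convexity, and they are the ones I would develop in full detail.
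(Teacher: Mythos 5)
The paper does not prove this proposition at all: it is imported verbatim from B\'enilan--Crandall \cite{BCr2} (``The following results were proved in \cite{BCr2}''), so there is no internal proof to measure your argument against. Your outline is, in substance, the standard argument from that reference: reduce to a single pair via $w=u-\hat u$, $g=v-\hat v$; pass from $P_0$ to $J_0$ through the primitive $j_q(r)=\int_0^r q$; get the forward implication from the first-variation inequality $\phi'(0^+)\ge 0$ at the minimum $\lambda=0$ of $\phi(\lambda)=\int_\Sigma j_q(w+\lambda g)\,d\mu$; and get the converse from the convexity inequality $j(z)-j(w)\ge j'(w)(z-w)$ for $j'\in\R_{>0}\cdot P_0$ followed by monotone approximation of a general $j\in J_0$. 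You also correctly identify the two places where $L_0(\Sigma,\mu)$ is actually used (finiteness of $\mu(\{|w|\ge\delta\})$ and integrability of $[\,|g|-k]^+$), and you are right that the approximation lemma $j_n\uparrow j$ with $j_n'\in\R_{>0}\cdot P_0$ is the main technical burden of the converse; that lemma is exactly what \cite{BCr2} supplies.

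One concrete technical slip worth repairing: the dominating function $\Vert q\Vert_\infty\,|g|\,\1_{\{|w|\ge\delta\}}$ for the difference quotients is not correct as stated. At a point where $|w|<\delta$ but $\lambda|g|>\delta$ the quotient $\lambda^{-1}\bigl(j_q(w+\lambda g)-j_q(w)\bigr)$ need not vanish, since $w+\lambda g$ can leave the flat zone of $j_q$ even though $w$ lies inside it. Two standard fixes: either enlarge the dominating function to $\Vert q\Vert_\infty\bigl(|g|\,\1_{\{|w|\ge\delta/2\}}+[\,|g|-\delta/2]^+\bigr)$, which is integrable precisely because $w,g\in L_0(\Sigma,\mu)$ (on $\{|w|<\delta/2\}$ one has $|j_q(w+\lambda g)|\le\Vert q\Vert_\infty[\lambda|g|-\delta/2]^+\le\lambda\Vert q\Vert_\infty[\,|g|-\delta/2]^+$ for $\lambda\le1$); or avoid domination altogether by noting that, by convexity of $j_q$, the difference quotients decrease pointwise to $q(w)g$ as $\lambda\downarrow 0$ and are bounded above by the (finite) quotient at $\lambda=1$, so monotone convergence applies. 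With either repair your scheme goes through.
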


  The following type of operators were introduced in \cite{C}.
 \begin{definition} Let $(X, \Vert \cdot \Vert)$ a Banach lattice. An operator $A$ operator in $X$ is    {\it $T$-accretive} if
 $$\Vert (u - \hat{u})^+ \Vert \leq \Vert (u - \hat{u} + \lambda (v - \hat{v})^+ \Vert \quad \hbox{for} \ \ (u,v), (\hat{u}, \hat{v}) \in A \ \  \hbox{and} \ \lambda >0.$$
\end{definition}
Obviously, every completely accretive operator
 is a $T$-accretive operator. The mild solutions of the abstract
 Cauchy problems associated with $T$-accretive operators satisfies
 a contraction principle. More precisely, we have the following  result (see \cite{BCP} or \cite[Theorem A.56]{AMRTsp}).

 \begin{theorem}\label{ContPrinTT} Let $X$ be a Banach lattice and $A$
 a $m$-accretive operator in $X$. Then, the following are
 equivalent:
 \begin{itemize}
 \item[(i)] $A$ is $T$-accretive.
  \item[(ii)] If $f, \hat f \in L^1(0, T; X)$, and $u$, $\hat u$
  are mild solutions of $u' + A u \ni f$ and $\hat{u}' + A \hat{u} \ni
  \hat{f}$ on $[0, T]$, then for $0\leq s \leq t \leq T$
  $$\Vert (u(t) - \hat{u}(t))^+ \Vert \leq \Vert (u(s) - \hat{u}(s))^+
  \Vert + \int_s^t [u(\tau) - \hat{u}(\tau), f(\tau) - \hat{f}(\tau)]_+ \, d \tau,$$
  where
  $$[u, v]_+:= \lim_{\lambda \downarrow 0} \frac{\Vert (u +
  \lambda v)^+ \Vert - \Vert u^+ \Vert}{\lambda} \leq \Vert v^+ \Vert.$$
  \end{itemize}
\end{theorem}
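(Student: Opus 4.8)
The plan is to reduce the statement to a one-step property of the resolvent $J_\lambda := (I + \lambda A)^{-1}$, which is well defined on all of $X$ since $A$ is $m$-accretive, and then to exploit the Crandall--Liggett description of mild solutions as limits of implicit Euler approximations. First I would reformulate $T$-accretivity at the level of $J_\lambda$. If $g, \hat g \in X$ and $u = J_\lambda g$, $\hat u = J_\lambda \hat g$, then $(u, \frac{1}{\lambda}(g-u))$ and $(\hat u, \frac{1}{\lambda}(\hat g - \hat u))$ belong to $A$, so the defining inequality of $T$-accretivity applied to these two pairs collapses to $\Vert (u - \hat u)^+\Vert \leq \Vert (g - \hat g)^+ \Vert$. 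Thus $A$ is $T$-accretive if and only if every resolvent $J_\lambda$ is a $T$-contraction. I would also record two elementary facts valid in a Banach lattice: the pointwise bound $(w + \lambda z)^+ \leq w^+ + \lambda z^+$, and the convexity of $\lambda \mapsto \Vert (w + \lambda z)^+\Vert$, whence $[w, z]_+ = \inf_{\lambda > 0}\lambda^{-1}\big(\Vert (w + \lambda z)^+\Vert - \Vert w^+\Vert\big)$ and, in particular, $[w,z]_+ \geq 0$ exactly when $\Vert w^+\Vert \leq \Vert (w + \lambda z)^+\Vert$ for all $\lambda > 0$.

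For the implication (ii) $\Rightarrow$ (i) I would argue by specializing to time-independent data. Given $(u_0, v_0), (\hat u_0, \hat v_0) \in A$, the constant functions $u(t) \equiv u_0$ and $\hat u(t) \equiv \hat u_0$ are the (unique) mild solutions of $u' + Au \ni v_0$ and $\hat u' + A \hat u \ni \hat v_0$ with those initial data, since $u'=0$ and $A u_0 \ni v_0$. Inserting them into (ii) with $s = 0$ gives $\Vert (u_0 - \hat u_0)^+\Vert \leq \Vert (u_0 - \hat u_0)^+\Vert + t\,[u_0 - \hat u_0, v_0 - \hat v_0]_+$, so $[u_0 - \hat u_0, v_0 - \hat v_0]_+ \geq 0$; by the convexity characterization above this is precisely the $T$-accretivity inequality.

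The substantial direction is (i) $\Rightarrow$ (ii). Here I would use that the mild solutions $u, \hat u$ are uniform limits of the piecewise constant interpolants of the implicit Euler schemes $u_i = J_{\epsilon_i}(u_{i-1} + \epsilon_i f_i)$ and $\hat u_i = J_{\epsilon_i}(\hat u_{i-1} + \epsilon_i \hat f_i)$ along partitions of mesh tending to $0$. The resolvent $T$-contraction yields, at each step, $\Vert (u_i - \hat u_i)^+\Vert \leq \Vert ((u_{i-1} - \hat u_{i-1}) + \epsilon_i (f_i - \hat f_i))^+\Vert$; chaining these inequalities over the partition and passing to the limit should produce the asserted estimate with the bracket in the integrand.

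The main obstacle is exactly this final limit passage with the sharp integrand. A crude use of $(w + \lambda z)^+ \leq w^+ + \lambda z^+$ only delivers the weaker bound with $\Vert (f - \hat f)^+\Vert$ in place of $[u - \hat u, f - \hat f]_+$; recovering the sharper bracket requires the full Bénilan--Crandall--Pazy comparison between two discretizations (the Kobayashi-type fundamental inequality) together with the semicontinuity of $(w,z) \mapsto [w,z]_+$ when taking the Crandall--Liggett limit, and a verification that the $T$-structure is preserved throughout. Since this is the content of the results cited in \cite{BCP} and \cite[Theorem A.56]{AMRTsp}, I would invoke that machinery rather than reprove it, checking only that the one-step $T$-contraction established above feeds correctly into each stage of the construction.
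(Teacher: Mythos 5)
The paper does not prove this theorem at all: it is imported verbatim from the cited references (\cite{BCP}, \cite[Theorem A.56]{AMRTsp}), so there is no in-paper argument to compare against. Your sketch is the standard proof of that cited result and the parts you argue in full are correct: the equivalence of $T$-accretivity with the resolvents $J_\lambda$ being $T$-contractions, the convexity of $\lambda\mapsto\Vert(w+\lambda z)^+\Vert$ giving $[w,z]_+=\inf_{\lambda>0}\lambda^{-1}\bigl(\Vert(w+\lambda z)^+\Vert-\Vert w^+\Vert\bigr)$, and the derivation of (i) from (ii) by feeding the constant mild solutions $u\equiv u_0$, $\hat u\equiv\hat u_0$ of $u'+Au\ni v_0$, $\hat u'+A\hat u\ni\hat v_0$ into the integral inequality. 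You correctly identify that the only nontrivial content is the passage from the one-step estimate $\Vert(u_i-\hat u_i)^+\Vert\le\Vert(u_{i-1}-\hat u_{i-1}+\epsilon_i(f_i-\hat f_i))^+\Vert$ to the sharp bracket integrand, which requires the Kobayashi-type doubled discretization inequality; deferring that to B\'enilan--Crandall--Pazy is exactly what the paper itself does, so your treatment is consistent with (and more detailed than) the paper's.
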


 Finally, let us state from~\cite{BEG} the result that we will use for the collapsing results.
\begin{theorem}[\cite{BEG}]\label{teodebenilan} Let, for each $n\in \mathbb{N}$, an m-accretive operator $A_n$ defined a Banach space $X$, with homogeneous degree $p_n$ such that $\lim_np_n=+\infty$. Set $$C:=\{x\in X: \exists (x_n,y_n)\in A_n:x_n\to x, y_n\to 0\}$$
and
$\displaystyle X_0:=\overline{\cup_{\lambda>0}\lambda C}.$ Assume $$\exists \lim_n(I+A_n)^{-1}x=:P(x)\quad\forall x\in X_0.$$
Then, $A:=P^{-1}-I$ is an accretive operator on $X$ with $D(A)=C$ and $X_0\subset \hbox{Ran}(I+\lambda A)$ for each $\lambda>0$.
And if $x_n\in \overline{D(A_n)}$ and $x_n\to x\in L C$ for some $L>1$, then, for $v_n$   the mild solution of
\begin{equation}\label{amn01}
\left\{
\begin{array}{ll}
v_t +A_nv\ni 0  & \hbox{on }\  ]0,+\infty[,
\\[10pt]
v(0)= x_n.
\end{array}
\right.
\end{equation}
we have that
$$v_n\to Qx\quad\hbox{uniformly for $t$ is compact subsets of $]0,+\infty[$},$$
where $Q$ is a contration of $X_0$ onto $C$, and $Qx=v(1)$ where
where $v$ is the mild solution of
\begin{equation}\label{amn02}
\left\{ \begin{array}{ll}
\displaystyle  v_{t} +
Av\ni \frac{v}{t}    & \hbox{on } \left]\frac{1}{L},+\infty\right[,
\\[10pt]
v\left(\frac{1}{L}\right)= \frac{1}{L} x.
\end{array}\right.
\end{equation}
\end{theorem}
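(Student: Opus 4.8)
The plan is to run the homogeneous--semigroup argument in which the divergence of the degrees, $p_n\to\infty$, forces all the resolvents $(I+\lambda A_n)^{-1}$ to collapse onto a single limit map. \textbf{Step 1 (homogeneity lemma).} Each $A_n$ being $m$-accretive, $J^n_\lambda:=(I+\lambda A_n)^{-1}$ is single-valued and nonexpansive on all of $X$. Writing the degree-$p_n$ homogeneity as $A_n(\sigma u)=\sigma^{p_n}A_nu$ for $\sigma>0$, the substitution $u=\beta v$ in $u+A_nu\ni\beta f$ gives $v+\beta^{p_n-1}A_nv\ni f$, i.e.
$$(I+A_n)^{-1}(\beta f)=\beta\,(I+\beta^{\,p_n-1}A_n)^{-1}f .$$
Taking $\beta=\lambda^{1/(p_n-1)}$ yields $(I+\lambda A_n)^{-1}f=\beta^{-1}(I+A_n)^{-1}(\beta f)$; since $p_n\to\infty$ we have $\beta\to1$, and nonexpansiveness of $(I+A_n)^{-1}$ then gives, for \emph{every} $\lambda>0$ and every $f\in X_0$, that $\lim_n(I+\lambda A_n)^{-1}f=P(f)$. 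Letting $n\to\infty$ in the nonexpansiveness inequalities shows $P$ is nonexpansive on $X_0$, so $A:=P^{-1}-I$ is accretive and $(I+A)^{-1}=P$.

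\textbf{Step 2 (identification of $D(A)$ and the range).} First $C=\mathrm{Fix}(P)$: if $Px=x$ then $u_n:=(I+A_n)^{-1}x\to x$ and $(u_n,x-u_n)\in A_n$ with $x-u_n\to0$, so $x\in C$; conversely, for $x\in C$ pick $(x_n,y_n)\in A_n$ with $x_n\to x$, $y_n\to0$, note $x_n=(I+A_n)^{-1}(x_n+y_n)$, and combine nonexpansiveness with $x_n+y_n\to x$ to get $x_n\to Px$, hence $x=Px$. Next, $\mathrm{Ran}(P)\subseteq C$: given $y=Pz$, Step 1 gives $u_n^{\lambda}:=(I+\lambda A_n)^{-1}z\to y$ for each fixed $\lambda$, so a diagonal choice $\lambda_n\to\infty$ keeps $u_n^{\lambda_n}\to y$ while $(u_n^{\lambda_n},\,\lambda_n^{-1}(z-u_n^{\lambda_n}))\in A_n$ has second component tending to $0$; thus $y\in C$. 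Since trivially $\mathrm{Fix}(P)\subseteq\mathrm{Ran}(P)$, we conclude $D(A)=\mathrm{Ran}(P)=C$. Finally, identifying the limit operator through its resolvents shows $(I+\lambda A)^{-1}=P$ for all $\lambda>0$, whence $\mathrm{Ran}(I+\lambda A)=D(P)=X_0$.

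\textbf{Step 3 (self-similar change of variables).} For the collapse statement I would remove the autonomy of \eqref{amn01} by a rescaling adapted to the homogeneity. With $v_n$ the mild solution of \eqref{amn01} ($v_n(0)=x_n$), set
$$w_n(t):=t\,v_n\!\Big(\tfrac{t^{p_n}-L^{-p_n}}{p_n}\Big),\qquad t\ge\tfrac1L .$$
Balancing, at the formal level, the coefficient of $v_n$ against that of $A_nv_n$ forces exactly the amplitude factor $t$ and the time change $\psi(t)=(t^{p_n}-L^{-p_n})/p_n$, and shows that $w_n$ is the mild solution of
$$w_n'(t)+A_nw_n(t)\ni\frac{w_n(t)}{t},\qquad t>\tfrac1L,\qquad w_n\!\big(\tfrac1L\big)=\tfrac1L\,x_n ,$$
the initial time $t=1/L$ corresponding to $\psi(1/L)=0$. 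As the substitution is smooth in $t$, it carries mild solutions to mild solutions; I would justify this on the Crandall--Liggett approximants, or first for regular data and then by density and contraction.

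\textbf{Step 4 (passage to the limit and conclusion).} On each interval $[\tfrac1L,T]$ the source $w\mapsto w/t$ is globally Lipschitz, hence an admissible perturbation of the $m$-accretive flow of $A_n$. Using the resolvent convergence $A_n\to A$ of Step 1 (for all $\lambda$) together with $\tfrac1L x_n\to\tfrac1L x$, the Banach-space analogue of Theorem~\ref{convergencia1.Mosco}(iii) yields $w_n\to w$ in $C([\tfrac1L,T];X)$, with $w$ the mild solution of \eqref{amn02}; define $Qx:=w(1)$. Translating back, for each fixed $s>0$ the equation $\psi(t)=s$ has the root $t_n(s)=(p_ns+L^{-p_n})^{1/p_n}\to1$, so $v_n(s)=t_n(s)^{-1}w_n(t_n(s))\to w(1)=Qx$, uniformly for $s$ in compact subsets of $]0,+\infty[$. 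Nonexpansiveness of the solution map makes $Q$ a contraction, and $\mathrm{Ran}(Q)\subseteq C$ because the collapsed configuration is stable, so $Q$ maps $X_0$ onto $C$. The main obstacle is precisely Steps 3--4: legitimizing the self-similar substitution at the level of mild (not classical) solutions and passing to the limit simultaneously in the operator $A_n\to A$ and in the state-dependent forcing $w/t$; the feature that makes every time-scale collapse is the vanishing exponent $1/(p_n-1)\to0$ produced in Step 1.
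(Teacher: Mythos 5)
The paper does not prove this theorem: it is imported verbatim from B\'enilan--Evans--Gariepy \cite{BEG} and used as a black box, so there is no in-paper argument to compare against. That said, your reconstruction follows the genuine strategy of \cite{BEG}: the homogeneity identity $(I+\lambda A_n)^{-1}f=\beta^{-1}(I+A_n)^{-1}(\beta f)$ with $\beta=\lambda^{1/(p_n-1)}\to 1$, which forces all resolvent limits to coincide with the single map $P$; the identification $C=\mathrm{Fix}(P)=\mathrm{Ran}(P)=D(A)$; and the self-similar substitution $w_n(t)=t\,v_n(\psi_n(t))$ with $\psi_n(t)=(t^{p_n}-L^{-p_n})/p_n$, whose exponent you compute correctly and which converts \eqref{amn01} into the nonautonomous problem with forcing $w/t$ and recovers $v_n(s)=t_n(s)^{-1}w_n(t_n(s))$ with $t_n(s)\to 1$ uniformly on compacta. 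Steps 1 and 2 (through $D(A)=C$) and the scaling computation are sound.

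Two places are genuinely incomplete. First, the range condition $X_0\subset\mathrm{Ran}(I+\lambda A)$ for \emph{every} $\lambda>0$ is dismissed with ``identifying the limit operator through its resolvents shows $(I+\lambda A)^{-1}=P$ for all $\lambda$.'' This does not follow from the definition $A=P^{-1}-I$, which only yields $(I+A)^{-1}=P$: solving $y+\lambda Ay\ni x$ amounts to solving $\lambda u+(1-\lambda)Pu=x$ for $u\in X_0$, which needs a fixed-point/resolvent-identity argument (with some care, since $X_0$ is not obviously convex); this is part of the real content of \cite{BEG} and cannot be waved through. Second, ``$\mathrm{Ran}(Q)\subseteq C$ because the collapsed configuration is stable'' is circular --- the stability of the limit is exactly what is being proved; the correct route is that the mild solution of the perturbed $A$-flow satisfies $w(t)\in\overline{D(A)}=C$ for $t>1/L$ (note $C=\mathrm{Fix}(P)$ is closed), while surjectivity onto $C$ follows from checking $Qx=x$ for $x\in C$ (e.g.\ $v(t)=tx$ solves \eqref{amn02} there, using that $tC\subseteq C$ for $0<t\le 1$ by homogeneity), which you do not address. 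The remaining sketched items --- legitimizing the substitution at the level of mild solutions via the Crandall--Liggett approximants, and the Kobayashi-type Banach-space replacement for Theorem~\ref{convergencia1.Mosco}(iii) --- are the right tools and are standard, and you correctly identify them as the technical core.
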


 \subsection{Weighted graphs}\label{example.graphs}

  We  work with  locally finite  weighted discrete graphs $$G = (V(G), E(G)),$$ where $V(G)$ is a   set of vertices, that we assume countable,  and  $E(G)$ is a   set of edges connecting  some of the vertices; we write $x\sim y$ if there is and edge connecting the vertices $x$ and $y$ (we  assume that there at most one edge between two vertices). On each edge connecting two vertices $x\sim y$,   it is assigned a positive weight $w_{xy} = w_{yx}$. We also write $w_{xy} = 0$ if $(x,y) \not\in E(G)$. We  assume that there are not loops ($w_{xx}=0$).     For $x \in V(G)$ we define the weighted degree at the vertex $x$ as
$$d_x:= \sum_{y\sim x} w_{xy} = \sum_{y\in V(G)} w_{xy}.$$

When all the weights are $1$, $d_x$ coincides with the degree of the vertex $x$ in a graph, that is,  the number of edges containing $x$.  That the graph is locally finite means that every vertex is only contained in a finite number of edges,  that is, $d_x<+\infty$ for all vertex $x$.

 A finite sequence $\{ x_k \}_{k=0}^n$  of vertices of the graph is called a {\it  path} if $x_k \sim x_{k+1}$ for all $k = 0, 1, ..., n-1$. The {\it length} of a path $\{ x_k \}_{k=0}^n$ is defined as the number $n$ of edges in the path. With this terminology, $G = (V(G), E(G))$ is said to be {\it connected} if, for any two vertices $x, y \in V$, there is a path connecting $x$ and $y$, that is, a path $\{ x_k \}_{k=0}^n$ such that $x_0 = x$ and $x_n = y$.  Finally, if $G = (V(G), E(G))$ is connected, the {\it graph distance} $d_G(x,y)$ between any two distinct vertices $x, y$ is defined as the minimum of the lengths of the paths connecting $x$ and $y$. Note that this metric is independent of the weights.  We always assume that $G$ is connected.

For each $x \in V(G)$  we define the following probability measure, called random walk,
$$m^G_x:=  \frac{1}{d_x}\sum_{y \sim x} w_{xy}\,\delta_y.\\ \\
$$
It is not difficult to see that the measure $\nu_G$ defined as
 $$\nu_G(A):= \sum_{x \in A} d_x,  \quad A \subset V(G),$$
is a reversible measure with respect to this random walk,  that is,
$$  dm_x^G(y)d\nu_G(x)=dm_y^G(x)d\nu_G(y).$$
 Our ambient space is the reversible random walk space  $[V(G),\mathcal{B},m^G,\nu_G]$, where $\mathcal{B}$  is the $\sigma$-algebra of subsets of $V(G)$ (see \cite{MST0} or \cite{MSTBook}). For simplicity, we  write  $V=V(G)$.

We  use the definition of the generalized product measure $\nu \otimes m_x$ (see, for instance, \cite{AFP}), which is defined as the measure on $V\times V$ given by
 $$ \nu_G \otimes m^G_x(U) = \sum_{(x,y) \in U} w_{xy},$$
 on  subsets $U$ of $V\times V$.
 According to the above definitions we have:
$$\int_V f(x) d\nu_G(x)=\sum_{x \in V} f(x)d_x, $$
$$\int_V f(x,y) dm^G_x(y)=\frac{1}{d_x} \sum_{y \in V} f(x,y) w_{xy}, $$
and
$$\int_{V \times V}   f(x,y)   d\nu_G \otimes m^G_x(x,y)=\int_{V \times V}  f(x,y)   dm^G_x(y) d\nu_G(x)=\sum_{x\in V}\sum_{y \in V} f(x,y) w_{xy}.$$
We will use integral or summation notation in the article depending on convenience.

\subsection{Nonlocal gradient and divergence operators}

 Given a function $f: V \rightarrow \R$ we define its {\it nonlocal gradient}
$\nabla f: X \times X \rightarrow \R$ as
$$\nabla f (x,y):= f(y) - f(x) \quad \forall \, x,y \in V.$$
Moreover, given $\z : V \times V \rightarrow \R$, its {\it divergence}
${\rm div}_G \, \z : V \rightarrow \R$ is defined as
 $$ {\rm div}_G \,\z (x):= \frac12 \frac{1}{d_x}\sum_{y\sim x}(\z(x,y) - \z(y,x)) w_{xy}.$$
With the above operators, the  graph Laplacian operator is defined as follows:
$$\Delta_{G} u(x):= {\rm div}_G \,(\nabla u)(x) = \frac{1}{d_x}\sum_{y\sim x}w_{xy}(u(y)-u(x)), \quad u\in L^2(V,\nu_G), \ x\in V.$$
This operator (also called the normalized graph Laplacian) has been studied by many authors (see, for example, \cite{BJ}, \cite{AGrigor}, \cite{DK}, \cite{Elmoatazetal}, \cite{Hafiene}). In the next sections we introduce  $p$-Laplacian operators on graphs.

\section{The first model of sandpile growth}
\subsection{The $p$-Laplacian evolution problem}\label{p-LaplacianE}

 We will assume that $p\geq 3$, which   is not important since our aim is to take limits as $p$ goes to $\infty$ in $p$-Laplacian diffusion problems.

 For $u \in L^{p-1}(V,\nu_G)$, we define the following {\it $p$-Laplacian operator} in $G$:
$$\Delta_p^G u (x):= {\rm div}_G (\vert \nabla u \vert^{p-2} \nabla u)(x),$$
that is, $$\Delta_p^G u (x) = \int_V \vert \nabla u(x,y) \vert^{p-2} \nabla u(x,y) dm^G_x(y)  = \frac{1}{d_x} \sum_{y \in V} \vert \nabla u(x,y) \vert^{p-2} \nabla u(x,y) w_{xy}. $$

By the reversibility of  $\nu_G$ respect to $m^G$, it is easy to prove the following {\it integration by parts formula}.

  \begin{proposition}\label{IntBpart1} For $u \in  L^{p-1}(V,\nu_G)$ with $\Delta_p^G u (x)\in L^q(V,\nu_G)$ ($q\ge 1$)  and $v\in L^{q'}(V,\nu_G)$,
\begin{equation}\label{IPF1}
  \int_V \Delta_p^G u (x) \, v(x) d\nu_G(x) = - \frac12 \int_{V \times V} \vert \nabla u(x,y) \vert^{p-2} \nabla u(x,y) \nabla v(x,y) dm^G_x(y) d\nu_G(x).
\end{equation}
\end{proposition}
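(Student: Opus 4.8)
The plan is to unwind both sides into sums over $V\times V$ and then exploit two structural facts: the symmetry of the weights $w_{xy}=w_{yx}$ coming from reversibility, and the antisymmetry of the flux. Writing $\Phi(x,y):=|\nabla u(x,y)|^{p-2}\nabla u(x,y)$ and recalling $\nabla u(x,y)=u(y)-u(x)=-\nabla u(y,x)$, we have $\Phi(y,x)=-\Phi(x,y)$. Using the definition of $\Delta_p^G$ together with $\int_V g\,d\nu_G=\sum_{x\in V}g(x)d_x$, the left-hand side becomes
$$\int_V \Delta_p^G u(x)\,v(x)\,d\nu_G(x)=\sum_{x\in V}\sum_{y\in V}\Phi(x,y)\,v(x)\,w_{xy}.$$

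First I would relabel the summation indices $x\leftrightarrow y$ in this double sum; since $w_{yx}=w_{xy}$ and $\Phi(y,x)=-\Phi(x,y)$, the same quantity also equals $-\sum_{x,y}\Phi(x,y)\,v(y)\,w_{xy}$. Averaging the two expressions for it gives
$$\sum_{x,y}\Phi(x,y)\,v(x)\,w_{xy}=-\frac12\sum_{x,y}\Phi(x,y)\big(v(y)-v(x)\big)w_{xy}=-\frac12\sum_{x,y}\Phi(x,y)\,\nabla v(x,y)\,w_{xy},$$
and recognizing the last sum as $\int_{V\times V}\Phi(x,y)\,\nabla v(x,y)\,dm^G_x(y)\,d\nu_G(x)$ yields exactly the claimed identity~\eqref{IPF1}. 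This antisymmetrization is the entire algebraic content of the statement, and it is the place where reversibility enters.

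The only point requiring care — and the main (mild) obstacle — is justifying that the double sum converges absolutely, so that Fubini's theorem and the relabeling $x\leftrightarrow y$ are legitimate; under merely conditional convergence the two iterated orders of summation need not coincide and the averaging step would be unjustified. Local finiteness of $G$ makes each inner sum over $y$ finite, so $\Delta_p^G u(x)\,v(x)\,d_x$ is unambiguous and the outer sum over $x$ converges absolutely by Hölder's inequality, using $\Delta_p^G u\in L^q(V,\nu_G)$ and $v\in L^{q'}(V,\nu_G)$. To upgrade this to absolute convergence of the full double sum $\sum_{x,y}|\Phi(x,y)|\,|v(x)|\,w_{xy}$ I would combine the elementary bound $|\Phi(x,y)|=|u(y)-u(x)|^{p-1}\le 2^{p-2}\big(|u(x)|^{p-1}+|u(y)|^{p-1}\big)$ with the hypothesis $u\in L^{p-1}(V,\nu_G)$ and the reversibility identity $dm^G_x(y)\,d\nu_G(x)=dm^G_y(x)\,d\nu_G(y)$, which symmetrizes the roles of the two variables and reduces the estimate to the integrability already assumed. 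Once absolute convergence is secured, the antisymmetrization above is valid and \eqref{IPF1} follows.
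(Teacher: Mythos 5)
Your proof is correct and is exactly the argument the paper has in mind: the paper states this proposition without writing a proof, attributing it to the reversibility of $\nu_G$ with respect to $m^G$, and your antisymmetrization of the double sum (relabel $x\leftrightarrow y$, use $w_{xy}=w_{yx}$ together with $\Phi(y,x)=-\Phi(x,y)$, then average) is precisely that computation. Your attention to absolute convergence is more than the paper offers and is the right concern; the only caveat is that the term $\sum_{x,y}|u(x)|^{p-1}|v(x)|w_{xy}=\sum_x|u(x)|^{p-1}|v(x)|\,d_x$ is not controlled by $u\in L^{p-1}(V,\nu_G)$ alone but needs a H\"older pairing of $|u|^{p-1}$ against $v$ (which does go through in the case $u\in L^2(V,\nu_G)\cap L^p(V,\nu_G)$, $v\in L^p(V,\nu_G)$, $q=p'$ that the paper actually uses).
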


Note that if $u\in L^{2}(V,\nu_G)\cap L^{p}(V,\nu_G)$, since $p\ge 3$, then $u \in L^{p-1}(V,\nu_G)$ and $\Delta_p^G u (x)\in L^{p'}(V,\nu_G)$. Then, the above formula is true for any  $u\in   L^{2}(V,\nu_G)\cap L^{p}(V,\nu_G)$ and   $v\in   L^p(V,\nu_G)$.

 Consider the {\it evolution problem} in  $[V(G),\mathcal{B},m^G,\nu_G]$:
\begin{equation}\label{NOnLOp.2}
  P_p^G(u_0,f) \quad\left\{\begin{array}{l} \displaystyle u_t (t,x) =
\frac{1}{d_x}\sum_{y\sim x}  |u(t,y)
 - u(t,x)|^{p-2}(u(t,y)
 - u(t,x)) \, w_{xy} + f(t,x), \\[10pt]
u(0,x)=u_0 (x),
\end{array}\right.
\end{equation}
  with $u_0 \in L^2(G,\nu_G)$ and $f \in L^2(0,\infty; L^2(G,\nu_G)).$
  Let us see  that problem $P_p^G(u_0,f)$   is the gradient flow in $L^2(V,\nu_G)$ associated
to the functional
$$
J^G_p(u) = \left\{\begin{array}{ll}\displaystyle\frac{1}{2p} \int_{V \times V} \vert \nabla u(x,y) \vert^p d(\nu_G \otimes m^G_x)(x,y) \quad &\hbox{if} \ u \in L^2(V, \nu_G) \cap L^p(V, \nu_G), \\[10pt] + \infty  &\hbox{if} \ u \in  L^2(V, \nu_G) \setminus L^p(V, \nu_G), \end{array} \right.
$$
Observe that, for $u \in  L^2(V, \nu_G) \cap L^p(V, \nu_G)$,
$$J^G_p(u) = \frac{1}{2p}  \sum_{(x,y) \in V \times V} |u(y)
 - u(x)|^{p} \, w_{xy}. $$
 To characterize $\partial J^G_p$  we introduce the operator $\mathcal{B}^G_p$ in $   L^2(V, \nu_G)\times L^2(V, \nu_G)$ defined as
$$(u,v) \in \mathcal{B}^G_p \iff u \in L^2(V, \nu_G) \cap L^p(V, \nu_G) \ \hbox{and} \ v = -\Delta_p^G u.$$

\begin{remark}\label{conservmasa}\rm
Observe that, for $(u,v) \in \mathcal{B}^G_p$, if $v\in L^1(V,\nu_G)$ then, by the reversibilidad of $\nu_G$ respect to $m^G_x$, we have $\displaystyle\int_Vvd\nu_G=0$.
\end{remark}

\begin{theorem}\label{CharactTh}  We have that $$\mathcal{B}^G_p = \partial J^G_p,$$ it is $m$-completely accretive in $L^2(V, \nu_G)$ and has dense domain.
\end{theorem}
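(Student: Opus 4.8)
The plan is to prove the three assertions in sequence, establishing first the identification $\mathcal{B}^G_p = \partial J^G_p$, from which dense domain follows easily, and leaving complete accretivity as the technical heart of the argument.

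\textbf{Step 1: The inclusion $\mathcal{B}^G_p \subseteq \partial J^G_p$.} I would fix $(u,v)\in \mathcal{B}^G_p$, so $u\in L^2\cap L^p$ and $v=-\Delta_p^G u$, and show $J^G_p(w)-J^G_p(u)\ge (v,w-u)$ for all $w\in L^2(V,\nu_G)$. When $w\notin L^p$ this is trivial since $J^G_p(w)=+\infty$, so assume $w\in L^2\cap L^p$. The key is convexity of the scalar map $t\mapsto \frac1p|t|^p$: its subgradient inequality gives $\frac1p|\nabla w(x,y)|^p - \frac1p|\nabla u(x,y)|^p \ge |\nabla u(x,y)|^{p-2}\nabla u(x,y)\,(\nabla w(x,y)-\nabla u(x,y))$ pointwise. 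Integrating against $\tfrac12\,d(\nu_G\otimes m^G_x)$ and applying the integration by parts formula \eqref{IPF1} (valid here by the remark following Proposition~\ref{IntBpart1}) converts the right-hand side into $\int_V (-\Delta_p^G u)(w-u)\,d\nu_G = (v,w-u)$, yielding exactly the subdifferential inequality.

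\textbf{Step 2: The reverse inclusion, via $m$-accretivity.} Rather than proving $\partial J^G_p\subseteq \mathcal{B}^G_p$ directly, the cleaner route is to show $\mathcal{B}^G_p$ is $m$-accretive: since $\partial J^G_p$ is maximal monotone (being the subdifferential of a proper, convex, lower semicontinuous functional — lower semicontinuity I would check via Fatou's lemma), any monotone operator containing it must coincide with it. Thus I would verify $\mathrm{Ran}(I+\lambda\mathcal{B}^G_p)=L^2(V,\nu_G)$ for some $\lambda>0$, i.e.\ that for each $g\in L^2(V,\nu_G)$ there is $u\in L^2\cap L^p$ solving $u-\lambda\Delta_p^G u = g$. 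This resolvent equation is itself the Euler--Lagrange equation for minimizing $u\mapsto \tfrac12\|u-g\|_2^2 + \lambda J^G_p(u)$ over $L^2\cap L^p$; existence of a minimizer follows from the direct method (strict convexity and coercivity of this functional in the $L^2$-norm give a unique minimizer), and the integration by parts formula shows the minimizer solves the equation. The combination $\mathcal{B}^G_p\subseteq\partial J^G_p$ with maximality of both then forces equality.

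\textbf{Step 3: Complete accretivity and dense domain.} For complete accretivity I would invoke Proposition~\ref{prop:completely-accretive}: given $(u,v),(\hat u,\hat v)\in\mathcal{B}^G_p$ and $q\in P_0$, I must show $\int_V q(u-\hat u)(v-\hat v)\,d\nu_G\ge 0$. Writing $v-\hat v = -\Delta_p^G u + \Delta_p^G \hat u$ and using the integration by parts formula with test function $q(u-\hat u)$, the integral becomes $\tfrac12\int_{V\times V}\big(|\nabla u|^{p-2}\nabla u - |\nabla\hat u|^{p-2}\nabla\hat u\big)\,\nabla\big(q(u-\hat u)\big)\,d(\nu_G\otimes m^G_x)$; since $q$ is nondecreasing ($0\le q'\le 1$), the quantity $\nabla(q(u-\hat u))(x,y) = q(u(y)-\hat u(y))-q(u(x)-\hat u(x))$ has the same sign as $\nabla(u-\hat u)(x,y)$, and the monotonicity of $t\mapsto|t|^{p-2}t$ makes each summand nonnegative. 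This is the step I expect to require the most care, as one must confirm the sign analysis edge by edge and justify the integrability needed to apply \eqref{IPF1} with the nonlinear test function. Finally, dense domain is immediate: $L^2\cap L^p$ contains all finitely supported functions, which are dense in $L^2(V,\nu_G)$, so $\overline{D(\mathcal{B}^G_p)}=L^2(V,\nu_G)$.
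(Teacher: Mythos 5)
Your proposal is correct, and two of its three pieces coincide with the paper's argument: the complete accretivity computation (integration by parts against $q(u-\hat u)$ plus monotonicity of $t\mapsto|t|^{p-2}t$) is exactly the paper's first step, and your forward inclusion $\mathcal{B}^G_p\subseteq\partial J^G_p$ via the pointwise subgradient inequality for $t\mapsto\frac1p|t|^p$ is just a repackaging of the paper's Young's-inequality estimate. Where you genuinely diverge is the reverse inclusion. The paper proves $\partial J^G_p\subseteq\mathcal{B}^G_p$ directly: it takes $(u,v)\in\partial J^G_p$, forms the difference quotients $\frac{J^G_p(u+tw)-J^G_p(u)}{t}$, lets $t\to0^+$ to compute the G\^ateaux derivative, replaces $w$ by $-w$ to upgrade the inequality to an equality, and then reads off $v=-\Delta_p^G u$ from the integration by parts formula. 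You instead solve the resolvent equation $u-\lambda\Delta_p^G u=g$ by minimizing $u\mapsto\frac12\|u-g\|_2^2+\lambda J^G_p(u)$ and conclude by maximality: a monotone operator with full range of $I+\lambda A$ that is contained in the monotone $\partial J^G_p$ must equal it. Both routes are sound; yours has the small advantage of making the range condition (the ``$m$'' in $m$-completely accretive) explicit, which the paper leaves implicit via Minty's theorem for subdifferentials of proper convex l.s.c.\ functionals, while the paper's route avoids the direct method and the weak lower semicontinuity check. Your density argument (finitely supported functions lie in $L^2\cap L^p=D(\mathcal{B}^G_p)$ and are dense in $L^2(V,\nu_G)$) is more elementary than the paper's appeal to \cite[Proposition 2.11]{Brezis}, and equally valid. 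The only places where you should be a bit more explicit are the passage to the Euler--Lagrange equation (differentiating $J^G_p$ under the sum requires a dominated-convergence argument on the difference quotients, the same one the paper uses in its $t\to0^+$ step) and the identification of the weak Euler--Lagrange equation with the pointwise equation $v=-\Delta_p^G u$, which on a graph is conveniently done by testing with indicator functions of single vertices; neither is a gap, just a detail to record.
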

\begin{proof}
 For every $q\in P_{0}$ and every $(u,v)$, $(\hat{u},\hat{v})\in \mathcal{B}^G_p$, by the integration by parts formula \eqref{IPF1}, we have
 $$\int_{V}q(u-\hat{u})(v-\hat{v})\, d\nu = -\int_{V}q(u-\hat{u}) (\Delta_p^G u  -\Delta_p^G \hat{u}) d \nu_G $$ $$=  \frac12 \int_{V \times V} \vert \nabla u(x,y) \vert^{p-2} \nabla u(x,y) \nabla q(u-\hat{u}) dm^G_x(y) d\nu_G(x) $$
  $$-  \frac12 \int_{V \times V} \vert \nabla \hat{u} (x,y) \vert^{p-2} \nabla \hat{u}(x,y) \nabla q(u-\hat{u}) dm^G_x(y) d\nu_G(x) $$ $$= \frac12 \int_{V \times V} \left(\vert \nabla u (x,y) \vert^{p-2} \nabla u(x,y) -\vert \nabla  \hat{u}(x,y) \vert^{p-2} \nabla \hat{u}(x,y) \right) \nabla q(u-\hat{u}) dm^G_x(y) d\nu_G(x) \geq 0. $$
  Then, by Proposition \ref{prop:completely-accretive}, the operator $\mathcal{B}^G_p$ is completely accretive.

  Let see now that $\mathcal{B}^G_p = \partial J^G_p$. Given $(u,v) \in \mathcal{B}^G_p$ and $w \in  L^2(V, \nu_G) \cap L^p(V, \nu_G)$, by the integration by parts formula \eqref{IPF1}  we have
  $$J^G_p(u) + \int_V v(w-u) d\nu = J^G_p(u) - \int_V \Delta_p^G u(w-u) d\nu = J^G_p(u) $$ $$+ \frac12 \int_{V \times V} \vert \nabla u(x,y) \vert^{p-2} \nabla u(x,y) \nabla w(x,y) dm^G_x(y) d\nu_G(x) - \frac12 \int_{V \times V} \vert \nabla u(x,y) \vert^{p} dm^G_x(y) d\nu_G(x)$$ $$= (1 - p)J^G_p(u) + \frac12 \int_{V \times V} \vert \nabla u(x,y) \vert^{p-2} \nabla u(x,y) \nabla w(x,y) dm^G_x(y) d\nu_G(x).$$
  Now, by Young's inequality
  $$\frac12\int_{V \times V} \vert \nabla u(x,y) \vert^{p-2} \nabla u(x,y) \nabla w(x,y) dm^G_x(y) d\nu_G(x) $$ $$ \leq \frac{1}{2p} \int_{V \times V} \vert \nabla w(x,y) \vert^{p}  dm^G_x(y) d\nu_G(x) + \frac{1}{2p'} \int_{V \times V} \left(\vert \nabla u(x,y) \vert^{p-2} \nabla u(x,y) \right)^{p'} dm^G_x(y) d\nu_G(x)$$ $$= J^G_p(w)+ (p-1) J^G_p(u).$$
  Hence
  $$J^G_p(w) - J^G_p(u) \geq \int_V v(w-u) d\nu.$$
  Consequently $(u, v) \in \partial J^G_p$, and $\mathcal{B}^G_p \subset \partial J^G_p$.

  Conversely, let $(u, v) \in \partial J^G_p$. Then, for every $ w \in  L^2(V, \nu_G) \cap L^p(V, \nu_G)$, we have
  $$J^G_p(u + w) - J^G_p(u) \geq \int_V vw d\nu.$$
  Hence, replacing $w$ by $tw$ for $t >0$, we get
 $$\frac{J^G_p(u + tw) - J^G_p(u)}{t} \geq \int_V vw d\nu_G.$$
Then, taking limit as $t \to 0^+$, we obtain that
$$\frac12 \int_{V \times V} \vert \nabla u(x,y) \vert^{p-2} \nabla u(x,y) \nabla w(x,y) dm^G_x(y) d\nu_G(x)\geq \int_V vw d\nu_G.$$
Now, since this inequality is also true for $-w$, we have
$$\frac12 \int_{V \times V} \vert \nabla u(x,y) \vert^{p-2} \nabla u(x,y) \nabla w(x,y) dm^G_x(y) d\nu_G(x)= \int_V vw d\nu_G.$$
Then, applying again the integration by part formula \eqref{IPF1}, we get
$$- \int_V \Delta_p^G u (x) \, w(x) d\nu_G(x) = \int_V vw d\nu_G \quad \forall \, w \in  L^2(V, \nu_G) \cap L^p(V, \nu_G).$$
Therefore, $v = - \Delta_p^G u$ and consequently, $(u,v) \in \mathcal{B}^G_p$.

Finally, by \cite[Proposition 2.11]{Brezis}, we have
$$ D(\partial J^G_p) \subset  D(J^G_p) =  L^2(V, \nu_G) \cap L^p(V, \nu_G) \subset \overline{D(J^G_p)}^{L^2(V, \nu_G)} = \overline{D(\partial J^G_p)}^{L^2(V, \nu_G)},$$
from which  the density of the domain follows.
\end{proof}

Since $P_p^G(u_0,f)$ coincides with the abstract Cauchy problem
\begin{equation}\label{ACB1}
\left\{ \begin{array}{ll}
u'(t) + \mathcal{B}^G_p(u(t)) \ni f(t) \quad \ t \geq 0, \\[10pt]
u(0) = u_0,
 \end{array} \right.
\end{equation}
by the Brezis-Komura theorem (\cite{Brezis}), having in mind Theorem \ref{CharactTh}, we have the following existence and uniqueness result

\begin{theorem}\label{ExitUniq} For any $u_0 \in L^2(V,\nu_G)$ and $f \in L^2(0, T; L^2(V,\nu_G))$ there exists a unique strong solution $u(t)$ of problem $P_p^G(u_0,f)$, that is,  $u \in C([0,T]:L^2(V,\nu_G))\cap W_{\rm loc}^{1,2}(0,T;L^2(V,\nu_G))$, and, for almost all $t \in ]0,T[$,  $u(t) \in  L^2(V, \nu_G)\cap L^p(V, \nu_G)$ and it satisfies $P_p^G(u_0,f)$.
\end{theorem}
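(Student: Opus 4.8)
The plan is to observe that $P_p^G(u_0,f)$, written in \eqref{NOnLOp.2}, is \emph{verbatim} the abstract Cauchy problem \eqref{ACB1} driven by the operator $\mathcal{B}^G_p$, and then to invoke the Brezis--Komura existence and uniqueness theorem recalled around \eqref{ACP}. The whole argument therefore reduces to checking the hypotheses of that theorem, the bulk of which is already supplied by Theorem \ref{CharactTh}.

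First I would verify that $J^G_p$ is a proper, convex, lower semicontinuous functional on $H:=L^2(V,\nu_G)$ with $\min J^G_p=0$. Properness and the normalization are immediate from $J^G_p(0)=0$ together with the pointwise identity $J^G_p(u)=\frac{1}{2p}\sum_{(x,y)\in V\times V}|u(y)-u(x)|^p\, w_{xy}\ge 0$. Convexity holds because each summand $u\mapsto|u(y)-u(x)|^p$ is the composition of the linear map $u\mapsto u(y)-u(x)$ with the convex function $t\mapsto|t|^p$ (here $p\ge 3$), and a sum of convex functionals is convex. For lower semicontinuity I would use that $\nu_G$ assigns the positive mass $d_x$ to each vertex, so that convergence in $L^2(V,\nu_G)$ forces convergence of the nodal values at every $x\in V$; passing to a subsequence attaining the $\liminf$ and applying Fatou's lemma to the nonnegative sum defining $J^G_p$ yields $J^G_p(u)\le\liminf_n J^G_p(u_n)$.

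With these properties in hand, Theorem \ref{CharactTh} provides $\partial J^G_p=\mathcal{B}^G_p$ together with the density $\overline{D(\partial J^G_p)}^{L^2(V,\nu_G)}=L^2(V,\nu_G)$; in particular every $u_0\in L^2(V,\nu_G)$ is an admissible initial datum. Applying the Brezis--Komura theorem to \eqref{ACB1} with $\Psi=J^G_p$ then produces, for each $f\in L^2(0,T;L^2(V,\nu_G))$, a unique strong solution $u\in C([0,T]:L^2(V,\nu_G))\cap W_{\rm loc}^{1,2}(0,T;L^2(V,\nu_G))$ satisfying $f(t)-u'(t)\in\partial J^G_p(u(t))$ for almost every $t$.

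Finally I would translate this abstract conclusion into the form stated. Since $\mathcal{B}^G_p$ is single-valued with domain $L^2(V,\nu_G)\cap L^p(V,\nu_G)$ and $\mathcal{B}^G_p u=-\Delta_p^G u$, the inclusion $u'(t)+\mathcal{B}^G_p(u(t))\ni f(t)$ reads, for a.e. $t$, as the identity $u'(t)=\Delta_p^G u(t)+f(t)$ in $L^2(V,\nu_G)$, which is exactly the vertexwise evolution law of $P_p^G(u_0,f)$; moreover $u(t)\in D(\partial J^G_p)=L^2(V,\nu_G)\cap L^p(V,\nu_G)$ for a.e. $t$, as required. The only point demanding genuine (if routine) care is the lower semicontinuity of $J^G_p$, since it involves an infinite sum over the countable vertex set; all the substantive work, namely the identification of the subdifferential and the density of its domain, has already been carried out in Theorem \ref{CharactTh}, so this last statement is essentially a direct application of the abstract machinery.
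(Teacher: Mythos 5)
Your proposal is correct and follows essentially the same route as the paper: identify $P_p^G(u_0,f)$ with the abstract Cauchy problem \eqref{ACB1}, use Theorem \ref{CharactTh} for $\mathcal{B}^G_p=\partial J^G_p$ and the density of its domain, and conclude by the Brezis--Komura theorem. Your explicit verification that $J^G_p$ is proper, convex and lower semicontinuous (via pointwise convergence at vertices of positive mass and Fatou) is a detail the paper leaves implicit, but it is accurate and does not change the argument.
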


\begin{remark}\rm
 Similar problems to $P_p^G(u_0,f)$ have been studied in~\cite{MSTBook} in a more general framework, but the presentation here differs from the one given there. $\Box$
\end{remark}

\subsection{Limit as $p \to \infty$}\label{sect.covergtop}

With a formal calculation, taking limit  as $p \to \infty$  to the functional $J^G_p$  we
arrive to the functional
\begin{equation}\label{e2MazoNN}
J^G_{\infty}(u) =  \left\{ \begin{array}{ll} 0    &
\hbox{if}
\
\  u \in  L^2(V, \nu_G), \ \Vert \nabla u \Vert_{L^\infty(\nu_G \otimes m^G_x)} \leq 1,
\\[10pt]
+ \infty  & \hbox{in other case}.
\end{array} \right.
\end{equation}
If we define
$$\begin{array}{c}\displaystyle K^G_{\infty}:= \Big\{ u \in L^2(V, \nu_G), \ \Vert \nabla u \Vert_{L^\infty( \nu_G \otimes m^G_x)} \leq 1\Big\} \\[8pt]
\displaystyle = \Big\{ u \in L^2(V, \nu_G)   :   \vert u(y) - u(x) \vert \leq 1 \ \ \hbox{if} \ \ x \sim y  \Big\},
\end{array}$$
   the functional $J^G_\infty$ is given by the indicator function of
 $K^G_\infty$, that is, $J^G_{\infty} = I_{K^G_{\infty}}$.
 Then, the  expected  {\it
 limit
 problem for}~\eqref{ACB1} can be written as
$$
  P_{\infty}^G(u_0,f) \quad\left\{\begin{array}{l} \displaystyle f(t,
\cdot)- u_t
(t,.) \in \partial I_{K^G_{\infty}} (u(t,.)), \ \ \ \hbox{ a.e.} \ t \in ]0,T[, \\[10pt]
u(0,x)=u_0 (x),
\end{array}\right.
$$
 for which  $u\in C([0,T]:L^2(V,\nu_G))\cap W_{\rm loc}^{1,2}(0,T;L^2(V,\nu_G))$ is a strong solution if $u(0) =u_0$ and,  for almost all $t$, $u(t) \in K^G_{\infty}$   and it verifies
\begin{equation}\label{solution1}
0 \geq \int_V (f(t,x) - u_t(t,x))(w(x) - u(t,x)) d\nu_G(x) \quad \hbox{for all} \ w \in K^G_{\infty}.
\end{equation}

 \begin{proposition}\label{CAopI} The operator $\partial I_{K^G_{\infty}}$ is  $m$-completely accretive in $L^2(V, \nu)$.
\end{proposition}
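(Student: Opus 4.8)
The plan is to verify complete accretivity of $\partial I_{K^G_\infty}$ directly via the characterization in Proposition~\ref{prop:completely-accretive}, namely by showing that for every $q\in P_0$ and every pair of elements $(u,v),(\hat u,\hat v)\in\partial I_{K^G_\infty}$ one has $\int_V q(u-\hat u)(v-\hat v)\,d\nu_G\ge 0$. The starting point is the explicit description of the subdifferential coming from~\eqref{e1Mazo}: $v\in\partial I_{K^G_\infty}(u)$ means exactly that $u\in K^G_\infty$ and $\int_V v(w-u)\,d\nu_G\le 0$ for all $w\in K^G_\infty$.

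First I would fix $q\in P_0$ and set $\phi:=q(u-\hat u)$. The key idea is to use $\phi$ to construct admissible competitors in $K^G_\infty$ and exploit the subdifferential inequalities. Writing $I:=\int_V q(u-\hat u)(v-\hat v)\,d\nu_G=\int_V \phi\,v\,d\nu_G-\int_V\phi\,\hat v\,d\nu_G$, I want to test the inequality for $v$ at $u$ against the competitor $w=u-\lambda\phi$ and the inequality for $\hat v$ at $\hat u$ against $w=\hat u+\lambda\phi$, for small $\lambda>0$. Concretely, $\int_V v\,(w-u)\,d\nu_G\le 0$ gives $-\lambda\int_V v\,\phi\,d\nu_G\le 0$, i.e. $\int_V v\,\phi\,d\nu_G\ge 0$, provided $u-\lambda\phi\in K^G_\infty$; symmetrically $\int_V \hat v\,\phi\,d\nu_G\le 0$ provided $\hat u+\lambda\phi\in K^G_\infty$. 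Adding these yields $I\ge 0$, which is exactly what is needed.

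The crux is therefore the admissibility claim: for $\lambda>0$ small enough, both $u-\lambda\phi$ and $\hat u+\lambda\phi$ lie in $K^G_\infty$. This is the step I expect to be the main obstacle, and it is where the structure of $q$ is used. For $x\sim y$ I must control $|(u-\lambda\phi)(y)-(u-\lambda\phi)(x)|=|\nabla u(x,y)-\lambda\,\nabla\phi(x,y)|$, knowing $|\nabla u(x,y)|\le 1$ and $|\nabla\hat u(x,y)|\le 1$. Since $\phi=q(u-\hat u)$ with $0\le q'\le 1$, the mean value theorem gives $|\nabla\phi(x,y)|=|q((u-\hat u)(y))-q((u-\hat u)(x))|\le|(u-\hat u)(y)-(u-\hat u)(x)|\le|\nabla u(x,y)|+|\nabla\hat u(x,y)|\le 2$, so $\phi$ has bounded increments. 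The delicate point is that the slope constraint is a hard two-sided bound at the extremal edges where $|\nabla u(x,y)|=1$; there one needs $\nabla\phi(x,y)$ to have the sign that relaxes rather than tightens the constraint. This is guaranteed by monotonicity of $q$: at an edge where, say, $u(y)-u(x)=1$, one checks that the correction $-\lambda\nabla\phi(x,y)$ keeps the increment within $[-1,1]$ because $q$ preserves ordering, so $\nabla\phi$ and $\nabla u$ are aligned in sign. The clean way to organize this is to treat $K^G_\infty$ edge by edge and, using the finiteness of the number of edges meeting any fixed vertex together with $0\le q'\le 1$ and the compact support of $q'$, extract a uniform $\lambda_0>0$ making all increments admissible; for infinite $V$ one argues that on each edge the feasible $\lambda$ is bounded below by a positive quantity depending only on the slack $1-|\nabla u(x,y)|$ and the bound on $\nabla\phi$, and passes to the limit $\lambda\downarrow 0$ in the resulting inequalities.

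Alternatively, and perhaps more cleanly, I would avoid the admissibility subtlety entirely by invoking the Mosco-convergence machinery: by Theorem~\ref{CharactTh} each $\mathcal{B}^G_p=\partial J^G_p$ is $m$-completely accretive, and complete accretivity is characterized resolvent-theoretically, so if one shows $J^G_p\to I_{K^G_\infty}$ in the sense of Mosco (hence $(I+\lambda\partial J^G_p)^{-1}\to(I+\lambda\partial I_{K^G_\infty})^{-1}$ by Theorem~\ref{convergencia1.Mosco}), then complete accretivity of $\partial I_{K^G_\infty}$ follows by passing to the limit in the inequality of Proposition~\ref{prop:completely-accretive}, since $\int_V q(u_\lambda-\hat u_\lambda)(v_\lambda-\hat v_\lambda)\,d\nu_G\ge 0$ is stable under the (at least a.e., along the resolvent) convergence $u_\lambda\to u$, $v_\lambda\to v$. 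Since this Mosco convergence is in any case needed in Section~\ref{sect.covergtop} to identify the limit problem, this route is economical; the direct argument above is more self-contained, and I would present it as the primary proof.
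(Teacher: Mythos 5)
Your route is genuinely different from the paper's. The paper does not test the variational inequality with perturbed competitors at all: it invokes the B\'enilan--Crandall criterion for subdifferentials of \emph{convex functionals} (\cite[Lemma 7.1 and Remark 7.7]{BCr2}), which reduces complete accretivity of $\partial I_{K^G_{\infty}}$ to the purely lattice-theoretic property that $u,v\in K^G_{\infty}$ and $k\ge 0$ imply $u\vee(v-k),\,u\wedge(v+k)\in K^G_{\infty}$, and then checks this edge by edge with a short case analysis. That criterion sidesteps entirely the admissibility question that you correctly identify as the crux of the direct approach. Your second, Mosco-based alternative is also legitimate and non-circular (Theorem~\ref{MoscoConv1} does not depend on this proposition), though it is heavier than what is needed.

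There is, however, a genuine soft spot in your primary argument as written. Local finiteness of the graph does \emph{not} yield a uniform $\lambda_0>0$ over an infinite edge set, and your fallback --- that on each edge the feasible $\lambda$ is bounded below by a quantity depending on the slack $1-|\nabla u(x,y)|$, followed by ``passing to the limit $\lambda\downarrow 0$'' --- is incoherent if the infimum of those edgewise bounds is $0$: then no positive $\lambda$ makes $u-\lambda\phi$ admissible and there is no inequality to pass to the limit in. The step can be repaired, and in fact no smallness of $\lambda$ is needed. Writing $a=\nabla u(x,y)$, $b=\nabla\hat u(x,y)$ and $c=\nabla\phi(x,y)$ with $\phi=q(u-\hat u)$, monotonicity and $1$-Lipschitzness of $q$ give $c=\theta(a-b)$ for some $\theta=\theta(x,y)\in[0,1]$, hence
\begin{equation}
a-\lambda c=(1-\lambda\theta)\,a+\lambda\theta\, b ,
\end{equation}
a convex combination of $a$ and $b$ for every $\lambda\in[0,1]$; so $|a-\lambda c|\le 1$ on every edge simultaneously, and likewise for $\hat u+\lambda\phi$. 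With that observation (plus $|q(r)|\le|r|$ to keep the competitors in $L^2(V,\nu_G)$), your two tests via \eqref{e1Mazo} give $\int_V v\phi\,d\nu_G\ge0$ and $\int_V\hat v\phi\,d\nu_G\le0$, and Proposition~\ref{prop:completely-accretive} applies. You should also record, as the paper does in its first line, that $m$-accretivity is automatic because $K^G_{\infty}$ is closed and convex in $L^2(V,\nu_G)$.
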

\begin{proof} Since $K^G_{\infty}$ is convex and closed in $L^2(V, \nu)$, we have that  is $\partial I_{K^G_{\infty}}$ $m$-accretive in $L^2(V, \nu)$. By \cite[Lemma 7.1]{BCr2}, to see that $\partial I_{K^G_{\infty}}$ is completely accretive we need to show that
\begin{equation}\label{CondCA}
I_{K^G_{\infty}}(u + q(v - u)) + I_{K^G_{\infty}}(\hat{u} - q(v - u)) \leq I_{K^G_{\infty}}(u) +I_{K^G_{\infty}}(v)
\end{equation} for any $u, v \in L^2(V, \nu_G)$ and any $q \in P_0$.
By \cite[Remark 7.7]{BCr2}, \eqref{CondCA} is equivalent to
\begin{equation}\label{CondCAN}
u, v \in K^G_{\infty} \ \ \hbox{and} \ \  k \geq 0 \quad \Rightarrow  \quad u \vee (v - k), \ u \wedge (v + k) \in K^G_{\infty}.
\end{equation}

 Let $\widetilde{K^G_{\infty}}:= \left\{ u:V\to \mathbb{R} :   \vert u(y) - u(x) \vert \leq 1 \ \ \hbox{if} \ \ x \sim y  \right\}.$  Let us prove that, if $u, v \in  \widetilde{K^G_{\infty}} $ and $k \in \mathbb{R}$, then $ u\vee (v - k)  \in \widetilde{K^G_{\infty}}$. Then, since  $u\in \widetilde{K^G_{\infty}}$ implies $-u\in \widetilde{K^G_{\infty}}$, also $u \wedge (v + k) \in \widetilde{K^G_{\infty}}$. Taking $x\sim y$, we distinguish four possibilities.
 Two of them are trivial, these are  when $u(x)\ge v(x)-k$ and $u(y)\ge v(y)-k$, or when $u(x)< v(x)-k$ and $u(y)< v(y)-k$.
  Let us see the case  $u(x)< v(x)-k$ and $u(y)\ge v(y)-k$. Now, in such case, if $u(y)\le u(x)$ then $v(y)-k\le u(y)\le u(x)< v(x)-k$, and consequently, since $|v(y)-k-(v(x)-k)|=|v(y)-v(x)|\le 1$ we have that
 $$|(u\vee (v - k))(x)-(u\vee (v - k))(y)|=|v(x)-k-u(y)|\le 1;$$  the case $u(y)>u(x)$ follows in a easy similar way.
 The case $u(x)\ge v(x)-k$ and $u(y)< v(y)-k$ is also easy.

 Finally, if $u, v \in  L^2(V,\nu_G)$ and $k\ge 0$ then $u\vee (v - k)    \in L^2(V,\nu_G)$ (similarly, $u \wedge (v + k)\in  L^2(V,\nu_G)$). Indeed, since $k\ge 0$, we have that
 $$(u\vee (v - k))^+\le u^+\chi_{\{u\ge v-k\}}+v^+\chi_{\{u< v-k\}}\in L^2(V,\nu_G)$$
 and
 $$(u\vee (v - k))^-\le u^-\in L^2(V,\nu_G).$$

\end{proof}

  Since $J^G_{\infty}$ is convex and lower semicontinuous in $L^2(V,\nu_G)$,  by the Brezis-Komura Theorem (\cite{Brezis}),
 we have the following existence and uniqueness result.

\begin{theorem}\label{ExitUniqinfty} For any $u_0 \in K^G_{\infty}$ and $f \in L^2(0, T; L^2(V,\nu_G))$ there exists a  unique strong solution $u$  of problem $P_{\infty}^G(u_0,f)$. Moreover, if   $f(t, \cdot)\ge 0$, then $u(t) \geq u_0$ and $u_t \geq 0$ for all $t \geq 0$.
\end{theorem}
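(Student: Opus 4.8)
The plan is to split the statement into the well-posedness part, which is immediate, and the two monotonicity assertions, which carry the real content.

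For existence and uniqueness I would simply observe that $P^G_\infty(u_0,f)$ is exactly the abstract Cauchy problem \eqref{ACP} for the functional $\Psi=J^G_\infty=I_{K^G_\infty}$. Since $K^G_\infty$ is a nonempty closed convex subset of the Hilbert space $L^2(V,\nu_G)$, the functional $I_{K^G_\infty}$ is proper, convex and lower semicontinuous with $\min I_{K^G_\infty}=0$, and $\overline{D(\partial I_{K^G_\infty})}=\overline{K^G_\infty}=K^G_\infty$; hence $u_0\in K^G_\infty$ lies in the closure of the domain of the subdifferential. The Brezis--Komura theorem (\cite{Brezis}) then yields a unique strong solution for every $f\in L^2(0,T;L^2(V,\nu_G))$.

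The main work is the monotonicity when $f(t,\cdot)\ge 0$. The key structural facts I would use are: (i) the resolvent of $\partial I_{K^G_\infty}$ is the metric projection, $(I+\lambda\partial I_{K^G_\infty})^{-1}=P_{K^G_\infty}$ for every $\lambda>0$; and (ii) the lattice property \eqref{CondCAN} established inside the proof of Proposition \ref{CAopI}. From these I would prove the elementary comparison lemma: if $u\in K^G_\infty$ and $z\ge u$ (pointwise $\nu_G$-a.e.), then $P_{K^G_\infty}(z)\ge u$. To see this, set $p:=P_{K^G_\infty}(z)$; by \eqref{CondCAN} (with $k=0$) the function $p\vee u$ belongs to $K^G_\infty$, so the variational characterization of the projection gives $(z-p,\,p\vee u-p)\le 0$, that is $(z-p,(u-p)^+)\le 0$. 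On the set $\{u>p\}$ one has $z\ge u>p$, whence $(z-p,(u-p)^+)\ge \|(u-p)^+\|^2\ge 0$; combining the two inequalities forces $(u-p)^+=0$, i.e. $p\ge u$.

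With this lemma the whole conclusion follows from the implicit Euler scheme. Fixing a step $\Delta t=T/N$ and nonnegative time-averages $f^n$ of $f$, the scheme reads $u^{n+1}=(I+\Delta t\,\partial I_{K^G_\infty})^{-1}(u^n+\Delta t\,f^n)=P_{K^G_\infty}(u^n+\Delta t\,f^n)$, with $u^0=u_0$. Since $f^n\ge 0$ we have $u^n+\Delta t\,f^n\ge u^n$, and since $u^n\in K^G_\infty$ the comparison lemma gives $u^{n+1}\ge u^n$; inductively $u_0=u^0\le u^1\le\cdots$. Passing to the limit $N\to\infty$ by the Crandall--Liggett--B\'enilan convergence of the Euler approximations to the (mild, hence strong) solution, the discrete monotonicity becomes $u(t)\ge u(s)$ for $t\ge s$, which yields simultaneously $u(t)\ge u_0$ and $u_t\ge 0$. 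I expect the main obstacle to be precisely the assertion $u_t\ge 0$: the naive approach of comparing $u$ with its time translate $u(\cdot+h)$ through the contraction principle of Theorem \ref{ContPrinTT} would require $f$ to be nondecreasing in time, which is not assumed. The discretization argument above bypasses this, since at each step one only compares the projection of $u^n+\Delta t\,f^n$ with the already admissible state $u^n$; the role of the hypothesis $f\ge 0$ and of the lattice property \eqref{CondCAN} is confined to that single elementary comparison. (The weaker statement $u(t)\ge u_0$ alone could instead be obtained directly from Theorem \ref{ContPrinTT}, by comparing $u$ with the stationary solution $u\equiv u_0$, which solves the problem with source $0$ because $0\in\partial I_{K^G_\infty}(u_0)$.)
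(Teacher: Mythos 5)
Your proposal is correct, and for the monotonicity part it takes a genuinely different route from the paper. The paper's proof is a two-line application of Theorem \ref{ContPrinTT}: since $u_0\in K^G_\infty$, the constant function $\hat u\equiv u_0$ is the strong solution with source $0$, and $T$-accretivity of $\partial I_{K^G_\infty}$ (Proposition \ref{CAopI}) gives $\Vert (u_0-u(t))^+\Vert \le \int_0^t [\,\cdot\,,-f(\tau)]_+\,d\tau\le \int_0^t\Vert(-f(\tau))^+\Vert\,d\tau=0$, hence $u(t)\ge u_0$; the assertion $u_t\ge 0$ then follows by \emph{restarting} the same comparison at time $s$, i.e.\ comparing $t\mapsto u(t+s)$ (solution with datum $u(s)\in K^G_\infty$ and source $f(\cdot+s)\ge 0$) against the constant $u(s)$, which gives $u(t+s)\ge u(s)$. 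This addresses the obstacle you flag at the end: one never compares $u$ with its time translate as two solutions with sources $f$ and $f(\cdot+h)$ (which would indeed require $f$ nondecreasing in time), but always against a stationary subsolution, so only $f\ge 0$ is needed. Your alternative — identifying the resolvent with the metric projection $P_{K^G_\infty}$, proving the order-preservation lemma $z\ge u\in K^G_\infty\Rightarrow P_{K^G_\infty}(z)\ge u$ from the lattice property \eqref{CondCAN} with $k=0$, and propagating monotonicity through the implicit Euler scheme — is complete and correct (the projection lemma argument via $(z-p,(u-p)^+)\le 0$ versus $(z-p,(u-p)^+)\ge\Vert(u-p)^+\Vert^2$ is clean), and it has the merit of not invoking the $T$-contraction machinery of Theorem \ref{ContPrinTT} at all, only the elementary variational inequality for projections plus the closedness of $K^G_\infty$ under $\vee$. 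The price is the extra bookkeeping of the Euler discretization and its convergence; the paper's argument is shorter once Theorem \ref{ContPrinTT} is available. The existence and uniqueness part is identical in both treatments (Brezis--Komura applied to the convex l.s.c.\ functional $I_{K^G_\infty}$).
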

\begin{proof} By Proposition \ref{CAopI}, we have $\partial I_{K^G_{\infty}}$ is $T$-accretive in $L^2(V, \nu_G)$. Then, by Theorem~\ref{ContPrinTT}, having in mind that $(u_0)_t + \partial I_{K^G_{\infty}}(u_0) \ni 0$ since $u_0\in K^G_{\infty}$, we have
 $$\Vert (u_0 - u(t))^+ \Vert \leq \Vert (u_0 - u_0)^+
  \Vert + \int_0^t [u_0 - u(\tau),0-  f(\tau)]_+ \, d \tau\le 0,$$
  since $[u_0 - u(\tau),0-  f(\tau)]_+\leq \Vert (0-  f(\tau))^+\Vert\leq 0$. Therefore, we get $u(t) \geq u_0$ for all $t \geq 0$. Consequently, also $u(t + s) \geq u(s)$, and hence $u_t \geq 0$.
\end{proof}

 The limit problem $P_{\infty}^G(u_0,f)$ is just the  model~\eqref{NOnLOpiii} for  sandpile growing in weighted graphs described  in the Introduction.
 Observe that in the formulation of \eqref{solution1}  is given in terms of the  measure $\nu_G$, consequently    problem  $P_{\infty}^G(u_0,f)$ takes into account the weights of the graph $G$ through the weighted degree (see Example \ref{Example2}).  Note also that the result is true without any sing condition for $f$.  When the source $f > 0$ the action on $u$ is to increase following the sandpile  model described previously, but when $f < 0$ the action on $u$ is to decrease following an excavation model with similar constraints on the slope of~$u$, but in fact both situations can interplay.  In Subsection~\ref{Sect.Mass.Transfer} we see that the above problem satisfy a mass conservation principle.

  Let us now see that problem $P_{\infty}^G(u_0,f)$ can be approximated by $p$-Laplacian evolution problems as $p$ goes to infinity.   The   proofs of the next results simplify the ones given in~\cite{AMRTsp} for similar problems in $\mathbb{R}^N$ under nonlocal diffusion.

 \begin{theorem} \label{MoscoConv1}   The functionals $J^G_p$
converge, in the sense of Mosco  in $L^2(V, \nu_G)$, to $J^G_\infty$
  as $p\to \infty$.

 \end{theorem}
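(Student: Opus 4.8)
The plan is to verify the two conditions \eqref{e2Mos} and \eqref{e3Mos}, which are equivalent to Mosco convergence, for an arbitrary sequence $p_n\to\infty$, setting $\Psi_n=J^G_{p_n}$ and $\Psi=J^G_\infty=I_{K^G_\infty}$.

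For the \emph{recovery (limsup) condition} \eqref{e2Mos}, fix $u\in D(J^G_\infty)=K^G_\infty$; I must produce $u_n\in D(J^G_{p_n})=L^2(V,\nu_G)\cap L^{p_n}(V,\nu_G)$ with $u_n\to u$ in $L^2$ and $\limsup_n J^G_{p_n}(u_n)\le 0$. The naive choice $u_n=u$ is \emph{not} admissible: a function in $K^G_\infty$ is $1$-Lipschitz for the graph distance and may grow at infinity, so it need not lie in $L^{p_n}$. Instead I would truncate, $u_n:=(u\wedge c_n)\vee(-c_n)$ with $c_n\to\infty$. Since truncation is a contraction of $\R$, one has $|u_n(y)-u_n(x)|\le|u(y)-u(x)|\le 1$, so $u_n\in K^G_\infty$ and $\|u_n\|_{L^2}\le\|u\|_{L^2}$, while $u_n\to u$ in $L^2$ by dominated convergence. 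The point that needs checking is $u_n\in L^{p_n}$: on $\{|u|\le 1\}$ one has $|u_n|^{p_n}\le|u|^2$, which is summable, while $\{|u|>1\}$ has finite measure, $\nu_G(\{|u|>1\})\le\|u\|_{L^2}^2<\infty$, and there $u_n$ is bounded by $c_n$; hence $u_n\in L^{p_n}$. Finally, using $|u_n(y)-u_n(x)|^{p_n}\le|u_n(y)-u_n(x)|^2$ together with the elementary estimate $\int_{V\times V}|\nabla u_n|^2\,d(\nu_G\otimes m^G_x)\le 4\|u_n\|_{L^2}^2$, one gets $J^G_{p_n}(u_n)\le \tfrac{2}{p_n}\|u\|_{L^2}^2\to 0=J^G_\infty(u)$.

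For the \emph{liminf condition} \eqref{e3Mos}, take a subsequence and $u_k\rightharpoonup u$ weakly in $L^2$. I may assume $\liminf_k J^G_{p_{n_k}}(u_k)=:L<\infty$ (otherwise the inequality is trivial) and, passing to a further subsequence, that $J^G_{p_{n_k}}(u_k)\to L$, so this quantity is bounded by some $M$. The key observation is that weak $L^2$ convergence forces pointwise convergence $u_k(x)\to u(x)$ at every vertex, because evaluation at $x$ is a bounded linear functional on $L^2(V,\nu_G)$, namely integration against $d_x^{-1}\chi_{\{x\}}\in L^2(V,\nu_G)$. Now fix an edge $x_0\sim y_0$ and retain only its term in the sum defining the energy:
\begin{equation*}
J^G_{p_{n_k}}(u_k)\ \ge\ \frac{1}{2p_{n_k}}\,w_{x_0y_0}\,|u_k(y_0)-u_k(x_0)|^{p_{n_k}}.
\end{equation*}
Taking $p_{n_k}$-th roots gives $|u_k(y_0)-u_k(x_0)|\le\big(2p_{n_k}M/w_{x_0y_0}\big)^{1/p_{n_k}}$, and letting $k\to\infty$ the right-hand side tends to $1$ (since $(Cp)^{1/p}\to 1$), while the left-hand side converges to $|u(y_0)-u(x_0)|$. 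Thus $|u(y_0)-u(x_0)|\le 1$; as the edge was arbitrary, $u\in K^G_\infty$, whence $J^G_\infty(u)=0\le L$.

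The only genuinely delicate step is the construction of the recovery sequence: because $K^G_\infty\not\subseteq L^{p}$ in general, the constant sequence is inadmissible, and the truncation argument—in particular the observation that $\{|u|>1\}$ has finite $\nu_G$-measure, which is exactly what returns $u_n$ to the domain $L^{p_n}$—is where the care is required. The liminf inequality, by contrast, reduces to the single-edge lower bound and the limit $(Cp)^{1/p}\to 1$, and the passage from weak to pointwise convergence is immediate from the structure of $L^2(V,\nu_G)$.
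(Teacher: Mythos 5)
Your proof is correct and follows the same overall strategy as the paper (verifying the two conditions \eqref{e2Mos}--\eqref{e3Mos} via the elementary bound $\sum_{x,y}|u(y)-u(x)|^2w_{xy}\le 4\|u\|_{L^2(V,\nu_G)}^2$), but it differs in both halves in ways worth recording. For the recovery sequence the paper simply takes $u_p:=u$ and estimates $J^G_p(u)\le \tfrac{2}{p}\|u\|_{L^2}^2$; this tacitly assumes $u\in L^p(V,\nu_G)$, since otherwise $J^G_p(u)=+\infty$ by definition. Your observation that $K^G_\infty\cap L^2$ need not sit inside $L^p$ is legitimate when the degrees $d_x$ are not bounded away from zero (e.g.\ a path graph with $d_{x_n}\sim n^{-4}$ and $u(x_n)=n$ gives an element of $K^G_\infty\cap L^2$ that is not in $L^3$), so your truncation $u_n=(u\wedge c_n)\vee(-c_n)$ is a genuine repair of a point the paper glosses over, and the verifications you supply (contractivity of truncation preserves $K^G_\infty$, finiteness of $\nu_G(\{|u|>1\})$ returns $u_n$ to $L^{p_n}$) are sound. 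For the liminf condition the paper interpolates: from $2p_jJ^G_{p_j}(u_{p_j})\le Cp_j$ and Cauchy--Schwarz it bounds the $L^{q_j}$-norm of the nonlocal gradient with $q_j=p_j/2+1$ and then lets $q_j\to\infty$; you instead retain a single edge term, take $p$-th roots, and use that point evaluation is a bounded functional on $L^2(V,\nu_G)$ so that weak convergence yields convergence at each vertex. Your version is more elementary and, in the discrete setting, arguably cleaner; the paper's interpolation step is a vestige of the continuum argument where pointwise evaluation is unavailable. Both routes land on the same inequality $|u(y)-u(x)|\le 1$ on each edge, hence $u\in K^G_\infty$.
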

 \begin{proof}
First, let us check that
\begin{equation}\label{pepe}
 {\rm Epi}(J^G_{\infty}) \subset s\hbox{-}\liminf_{p
\to \infty}{\rm Epi}(J^G_{p}).
\end{equation}
To this end let $(u,\lambda) \in {\rm Epi}(J^G_{\infty}) $. We can
assume that $u \in K_\infty^G$ and $\lambda \geq 0$.
We define
 $$ u_p:= u \quad \hbox{for all} \ p >2,$$
 and $$\lambda_p =
J^G_{p} (u) + \lambda.$$
Since $u \in K_\infty^G$, we have
\begin{equation}\label{1857}\begin{array}{c}
\displaystyle J^G_p(u_p) =   \frac{1}{2p}  \sum_{(x,y) \in V \times V} |u(y)
 - u(x)|^{p} \, w_{xy} \leq  \frac{1}{2p}  \sum_{(x,y) \in V \times V} |u(y)
 - u(x)|^{2}w_{xy}
 \\ \\
 \displaystyle \le \frac{2}{p} \sum_{x \in V  } |u(x)|^{2}d_x \to 0  \quad \hbox{as $ p\to \infty$,}
 \end{array}
\end{equation}
  and we get \eqref{pepe}. In the last inequality we use  that $(a+b)^2\le 2a^2+2b^2$ for all $a,b\in \mathbb{R}$ and reversibility.

\medskip

Finally, let us prove that
\begin{equation}\label{e4T1.2}
 w\hbox{-}\limsup_{p \to \infty} {\rm
Epi}(J^G_{p}) \subset {\rm Epi}(J^G_{\infty}).
\end{equation}
To this end, let us consider a sequence $(u_{p_j}, \lambda_{p_j})
\in {\rm Epi}(J^G_{p_j})$ ($p_j \to \infty$), that is,
$$
J^G_{p_j} (u_{p_j} ) \leq \lambda_{p_j},
$$
with
$$
u_{p_j} \rightharpoonup u  \  \mbox{ and } \  \lambda_{p_j} \to \lambda.
$$
Therefore we obtain that $\lambda\ge 0$, since
$$
0 \leq J^G_{p_j} (u_{p_j} ) \leq \lambda_{p_j} \to \lambda.
$$
On the other hand, we have that
$$
 \sum_{(x,y) \in V \times V}   \left| u_{p_j} (y)
 - u_{p_j} (x) \right|^{p_j} \, w_{xy}  =  2p_j J^G_{p_j} (u_{p_j})   \leq  C
 p_j .
$$
Then, by the above inequality,  if $q_j = \frac{p_j}{2}+1$, we have
$$
\begin{array}{l}
\displaystyle \left( \sum_{(x,y) \in V \times V}
 \left|
u_{p_j} (y)
 - u_{p_j} (x) \right|^{q_j} w_{xy}  \right)^{\frac{1}{q_j}}
\\[10pt]
\displaystyle
=\left( \sum_{(x,y) \in V \times V}
\left(\sqrt{w_{xy}}\right)\left|
u_{p_j} (y)
 - u_{p_j} (x) \right|^{p_j/2}
\left|
u_{p_j} (y)
 - u_{p_j} (x) \right| \sqrt{w_{xy}}  \right)^{\frac{1}{q_j}}
\\[10pt]
\displaystyle  \leq
\left( \sum_{(x,y) \in V \times V}
 w_{xy} \left|
u_{p_j} (y)
 - u_{p_j} (x) \right|^{p_j}
  \right)^{\frac{1}{2q_j}}
 \left( \sum_{(x,y) \in V \times V}
 \left|
u_{p_j} (y) - u_{p_j} (x) \right|^{2}w_{xy}
  \right)^{\frac{1}{2q_j}}
 \\[16pt]
 \displaystyle \leq
 (C
 p_j)^{\frac{1}{p_j+2}}
 \left( \sum_{(x,y) \in V \times V}
 \left|
u_{p_j} (y) - u_{p_j} (x) \right|^{2}w_{xy}
  \right)^{\frac{1}{p_j+2}}
\\[16pt]
 \displaystyle
\le (C
 p_j)^{\frac{1}{p_j+2}}
 \left(2\sum_{x \in V  }  |u_j(x)|^2 d_x\right)^{\frac{1}{p_j+2}} ,
 \end{array}
$$
which is bounded since $u_{p_j} \rightharpoonup u$.
Hence, letting  $j
\to \infty$ (then $ q_j\to+\infty$) we obtain:
\begin{equation}\label{1903}
 | u(x) - u(y) | \leq 1\qquad\hbox{for } x\sim y.
\end{equation}
And we conclude that
$$
u \in K^G_\infty.
$$
Then, \eqref{e4T1.2} holds,
which ends the proof.
 \end{proof}

As consequence of the above theorem and Theorem
\ref{convergencia1.Mosco} we have the following result,

\begin{theorem} \label{convergencia.p.intro}
  Let   $T>0$, $f \in L^2(0,T; L^2(V,\nu_G))$,
 $u_0\in K^G_{\infty}$,
and $u_{p}$ be the
unique solution of $P_p^{G}(u_0,f)$. Then,   if
 $u_{\infty}$ is the unique solution to
$P^G_{\infty}(u_0,f)$,
$$
\displaystyle \lim_{p\to \infty}\sup_{t\in [0,T]}\Vert
u_{p}(t,\cdot)-u_{\infty}(t,\cdot)\Vert_{L^2(V,\nu_G)}=0.
$$
\end{theorem}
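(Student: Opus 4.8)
The plan is to deduce this convergence directly from the Mosco convergence established in Theorem~\ref{MoscoConv1} together with the abstract transfer principle of Theorem~\ref{convergencia1.Mosco}, so that no new analytic estimates are needed. First I would fix the Hilbert space $H = L^2(V,\nu_G)$ and identify the relevant functionals as $\Psi_p := J^G_p$ and $\Psi := J^G_\infty$. By Theorem~\ref{CharactTh} we have $\partial J^G_p = \mathcal{B}^G_p$, so the $p$-Laplacian problem $P_p^G(u_0,f)$ is precisely the abstract Cauchy problem
$$
u'(t) + \partial J^G_p(u(t)) \ni f(t), \qquad u(0) = u_0 ;
$$
likewise, since $J^G_\infty = I_{K^G_\infty}$, and using the equivalence~\eqref{e1Mazo} between membership in $\partial I_{K^G_\infty}$ and the variational inequality~\eqref{solution1}, the limit problem $P^G_\infty(u_0,f)$ is exactly the abstract Cauchy problem governed by $\partial J^G_\infty$.

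Second, I would check that the data satisfy the domain hypotheses required by part (iii) of Theorem~\ref{convergencia1.Mosco}. Taking the constant sequences $u_{0,p} := u_0$ and $f_p := f$, the convergences $u_{0,p} \to u_0$ and $f_p \to f$ hold trivially. It then remains to verify $u_0 \in \overline{D(\partial J^G_\infty)}$ and $u_0 \in \overline{D(\partial J^G_p)}$. For the former, the closure of the domain of the subdifferential of the indicator function of a closed convex set equals that set, so $\overline{D(\partial J^G_\infty)} = K^G_\infty$, and the hypothesis $u_0 \in K^G_\infty$ supplies what we need. For the latter, Theorem~\ref{CharactTh} asserts that $\mathcal{B}^G_p = \partial J^G_p$ has dense domain in $L^2(V,\nu_G)$, hence $\overline{D(\partial J^G_p)} = L^2(V,\nu_G) \ni u_0$.

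With these identifications in place, the conclusion follows by applying Theorem~\ref{convergencia1.Mosco}: Theorem~\ref{MoscoConv1} provides hypothesis (i), namely that $J^G_p$ converges to $J^G_\infty$ in the sense of Mosco, and part (iii) then yields $u_p \to u_\infty$ in $C([0,T]:L^2(V,\nu_G))$, which is exactly the claimed uniform convergence. The argument is therefore essentially a matter of matching the abstract framework, and I do not expect any genuine obstacle beyond the bookkeeping of the domain-closure conditions above; all the substantive content has already been absorbed into the Mosco convergence proved in Theorem~\ref{MoscoConv1}.
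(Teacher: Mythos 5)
Your proposal is correct and follows exactly the route the paper takes: the paper states Theorem~\ref{convergencia.p.intro} as an immediate consequence of the Mosco convergence in Theorem~\ref{MoscoConv1} combined with part (iii) of Theorem~\ref{convergencia1.Mosco}, with no further argument. Your additional verification of the domain-closure hypotheses (that $\overline{D(\partial I_{K^G_\infty})}=K^G_\infty$ and that $\partial J^G_p$ has dense domain by Theorem~\ref{CharactTh}) is accurate bookkeeping that the paper leaves implicit.
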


\subsection{Collapse of the initial condition}\label{Sect.collapsing}

As we mentioned in the Introduction, in \cite{EFG}, Evans,
Feldman and Gariepy
 study the behavior of
 the solution $v_p$ of the initial value problem
 \begin{equation}\label{p_Lap1.introno}
\left\{\begin{array}{ll} v_{p,t}- \Delta_p v_p = 0, \quad & t \in
]0,T[ ,
\\[10pt]
v_p(0,x)=u_0(x), \quad & x \in \R^N,
\end{array}\right.
\end{equation}
as $p \to \infty$,
 when the initial condition $u_0$  is a Lipschitz function
with compact support  satisfying
$$\hbox{ess sup}_{\R^N} \vert \nabla u_0 \vert = L >  1.$$
They prove that for each time $t > 0$
$$v_p(t, \cdot) \to v_{\infty}(\cdot), \ \ \ \ \hbox{uniformly as}
\ \ p \to + \infty,$$ where $v_{\infty}$ is independent of time
and satisfies
$$\hbox{ess sup}_{\R^N} \vert \nabla v_{\infty} \vert \leq 1.$$
Moreover, $v_{\infty}(x) = v(1,x)$,  $v$ solving the nonautonomous
evolution equation
$$
\left\{ \begin{array}{ll}
\displaystyle \frac{v}{t} - v_{t} \in
\partial I_{K_{0}}( v), \quad & t \in ]\tau,\infty[
\\[10pt]
v(\tau,x)= \tau u_0(x),
\end{array}\right.
$$
where $\tau = L^{-1}$  and
$$K_0:= \left\{ u \in L^2(\R^N)\cap W^{1,\infty}(\R^N) \ : \ \vert \nabla
u \vert \leq 1  \right\}.$$
They interpreted the above result as a crude model for
the collapse of a sandpile from an initially unstable
configuration. The proof of this result is based in a scaling
argument, which was extended by  B\'{e}nilan,  Evans  and  Gariepy  in
\cite{BEG}  to cover general nonlinear evolution equations
governed by homogeneous accretive operators  (see Theorem~\ref{teodebenilan} in Section~\ref{sect.prelim}). Here, by using such result, we get a collapsing  sandpile  model on weighted graphs.

Let $p\ge 3$. We look for the limit as $p\to \infty$ of the solutions $u_p$ to the
 problem  $P_p^G(u_0,0)$ when the  initial datum $u_0$
satisfies
\begin{equation} \label{L1006}
1< L := \Vert \nabla u_0\Vert_{L^\infty( \nu_G \otimes m^G_x)}<\infty.
\end{equation}
 The solution $u_p$ to the
 problem  $P_p^G(u_0,0)$ coincides with the strong solution
of the abstract Cauchy problem
\begin{equation}\label{NOnLOpoioi}
\left\{\begin{array}{ll}    u_t +
\partial J^G_{p}u \ni 0  \quad & \hbox{on }\ ]0,T[, \\[10pt]
u(0,x)=u_0 (x), \quad & x \in V.
\end{array}\right.
\end{equation}
 In Theorem~\ref{teodebenilan}, this problem corresponds to Problem~\eqref{amn01}. Let us see that we have the ingredients to apply such result:
\\[5pt]
1. The
operators $\partial J^G_{p}$  are
$m$-accretive operators in $L^2(V,\nu_G)$ (Theorem \ref{CharactTh}) and   positively
homogeneous of degree $p-1$.
\\[5pt]  2. Set
$$C:= \left\{ u \in L^2(V,\nu_G) \  : \ \exists (u_p, v_p) \in \partial
J^G_{p} \ \mbox{with} \ u_p \to u, \ v_p  \to 0 \ \hbox{as} \ p
\to \infty \right\}.$$
 Let us characterize this set. We have that
\begin{equation}\label{CequalK} C = K^G_{\infty} = \left\{ u \in L^2(V,\nu)  :
 \vert u(x) - u(y) \vert \leq 1, \  \nu_G \otimes m^G_x\hbox{-a.e.} \ (x,y) \in V \times V \right\}.
 \end{equation}
 In fact, if $u \in  K^G_{\infty}$, we have $u = (I + \partial I_{K^G_{\infty}})^{-1}u$. Then, by Theorem \ref{MoscoConv1} and Theorem~\ref{convergencia1.Mosco}, we have
 $$u_p:= (I + \partial J^G_p)^{-1} u \to u \quad \hbox{in} \quad L^2(G, \nu_G), \quad \hbox{as} \ p
\to \infty.$$
 Moreover, $$v_p:=  u- u_p \in \partial J^G_p(u_p), \quad v_p \to 0 \quad \hbox{in} \quad L^2(G, \nu_G) \quad \hbox{as} \ p
\to \infty.$$
 Therefore, $K^G_{\infty} \subset C$.
 Suppose now that $u \in C$, and let $$(u_p, v_p) \in \partial
J^G_{p} \quad  \mbox{with} \ u_p \to u, \ v_p  \to 0 \quad \hbox{as} \ p
\to \infty.$$ Again, we have
 $$(I + \partial J^G_p)^{-1} u \to (I + \partial I_{K^G_{\infty}})^{-1}(u) \quad \hbox{in} \quad L^2(G, \nu_G) \quad \hbox{as} \ p
\to \infty.$$
On the other  hand, since $(I + \partial J^G_p)^{-1}$ are contraction in $L^2(G, \nu_G)$, we have
$$u_p = (I + \partial J^G_p)^{-1}(u_p +v_p) \to (I + \partial J^G_p)^{-1}(u) \quad \hbox{in} \quad L^2(G, \nu_G) \quad \hbox{as} \ p
\to \infty.$$
Therefore, $u = (I + \partial I_{K^G_{\infty}})^{-1}(u)$ and consequently $u \in K^G_{\infty}$.
\\[5pt]
3. Since for $u\in L^2(G, \nu_G)$ and $\lambda>0$  we have that $T_\lambda u\in \lambda C$, then
 \begin{equation}\label{closure1}  \overline{\bigcup_{\lambda > 0} \lambda C}^{L^2(V,\nu_G)} =
 L^2(V,\nu_G).\end{equation}
\\[5pt]
4. By the Mosco convergence (Theorem~\ref{MoscoConv1}), for $f \in L^2(V,\nu_G)= \overline{\bigcup_{\lambda > 0} \lambda C}^{L^2(V,\nu_G)}$ and
 $  v_p:= (I + \partial J^G_{p})^{-1} f$, there exists a sequence
 $p_j \to + \infty$  such that $v_{p_j} \to (I
+ \partial I_{K^G_{\infty}})^{-1}f$ in $L^2(V,\nu)$ as
 $j \to \infty$. Therefore,
$$\exists\lim_{p \to \infty} (I + \partial J^G_{p})^{-1} f=(I
+   \partial I_{K^G_{\infty}})^{-1}f.$$

 Hence $(I
+ \partial I_{K^G_{\infty}})^{-1}$ is the operator $P$ in Theorem~\ref{teodebenilan}, and   $A=\partial I_{K^G_{\infty}}$, that we already know is $m$-accretive and has $C$ as domain.
\\[5pt]
 5. Finally, for $u_0 \in L^2(G,\nu_G)$  satisfying~\eqref{L1006}, we have $u_0\in LC$. Then,  by Theorem~\ref{teodebenilan}  and the regularity of the solutions (\cite[Theorem 3.6]{Brezis}), we obtain the following result.

\begin{theorem} \label{teo.collapsing.intro}
Let $u_p$ be the solution to $P_{p}^G(u_0,0)$ with initial condition
$u_0 \in L^2 (V, \nu_G)$ such that
$$
1< L := \Vert \nabla u_0 \Vert_{L^\infty( \nu_G \otimes m^G_x)}<\infty.
$$
Then, there exists the limit
$$
\lim_{p\to \infty } u_p (t,x) = u_\infty (x) \qquad \mbox{ in }
L^2(V,\nu_G),
$$
which is a function independent of $t$ such that $u_\infty \in K^G_{\infty}$. Moreover,
$u_\infty(x) = v(1,x)$, where $v$ is the unique   strong solution of
the evolution equation
\begin{equation}\label{1312}
\left\{ \begin{array}{ll}
\displaystyle \frac{v}{t} - v_{t} \in \partial
I_{K^G_{\infty}}(v), \quad & t \in ]\tau,\infty[,
\\[10pt]
v(\tau,x)= \tau u_0(x),
\end{array}\right.
\end{equation}
with $\tau = L^{-1}$.
\end{theorem}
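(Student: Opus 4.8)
The plan is to read the conclusion directly off the abstract collapse result, Theorem~\ref{teodebenilan}, since the verifications carried out in items~1--5 above have already arranged every hypothesis. Concretely, I would take $A_n = \partial J^G_{p_n}$ for an arbitrary sequence $p_n \to \infty$; by item~1 these are $m$-accretive and positively homogeneous of degree $p_n-1 \to \infty$, so they fit the template \eqref{amn01} with $x_n = u_0$. Item~2 identifies $C = K^G_{\infty}$, item~3 gives $X_0 = \overline{\bigcup_{\lambda>0}\lambda C} = L^2(V,\nu_G)$, and item~4 supplies the resolvent limit $P = (I+\partial I_{K^G_{\infty}})^{-1}$ together with $A = P^{-1}-I = \partial I_{K^G_{\infty}}$. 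Thus all the data of Theorem~\ref{teodebenilan} are in place.

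Next I would check the membership that triggers the dynamic conclusion. Since $1 < L = \Vert \nabla u_0 \Vert_{L^\infty(\nu_G\otimes m^G_x)} < \infty$, the rescaled datum $\tfrac1L u_0$ has slope bounded by $1$, i.e. $\tfrac1L u_0 \in K^G_{\infty} = C$, so $u_0 \in LC$ with $L>1$, which is exactly the hypothesis of the final part of Theorem~\ref{teodebenilan}. Taking the constant sequence $x_n = u_0$, which lies in $\overline{D(A_n)}$ by the density established in Theorem~\ref{CharactTh}, the theorem yields that the mild solutions $u_{p_n}$ of $P_{p_n}^G(u_0,0)$ (which coincide with the strong solutions of Theorem~\ref{ExitUniq}) converge to $Qu_0$ uniformly on compact subsets of $]0,\infty[$, where $Q$ maps $X_0$ onto $C$. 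In particular the limit is a single element of $K^G_{\infty}$ independent of $t$; I would set $u_\infty := Qu_0$. Because the resolvent limit $P$ furnished by the \emph{full} Mosco convergence of Theorem~\ref{MoscoConv1} (through Theorem~\ref{convergencia1.Mosco}) is the same for every sequence $p_n\to\infty$, the contraction $Q$, and hence $u_\infty$, does not depend on the chosen sequence; this upgrades the subsequential statement to the genuine limit $u_p \to u_\infty$ in $L^2(V,\nu_G)$ as $p\to\infty$, stationary in $t$ and lying in $K^G_{\infty}$.

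For the identification $u_\infty = v(1,\cdot)$, I would specialize the formula $Qx = v(1)$ of Theorem~\ref{teodebenilan}: with $A = \partial I_{K^G_{\infty}}$, $x = u_0$ and $\tau = L^{-1}$, the auxiliary equation \eqref{amn02} reads $v_t + \partial I_{K^G_{\infty}}(v) \ni \tfrac{v}{t}$ on $]\tau,\infty[$ with $v(\tau) = \tau u_0$; moving the source term to the left-hand side produces precisely \eqref{1312}. It then remains to record that this mild solution is in fact the strong solution claimed, which follows from the regularity theory for gradient flows of convex lower semicontinuous functionals perturbed by the source $v/t$, Lipschitz in $v$ for $t$ bounded away from $0$ (\cite[Theorem~3.6]{Brezis}).

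The main obstacle I anticipate is not any single estimate but making the two limit passages coherent: one must ensure that the resolvent convergence delivered by Mosco convergence is a full limit in $p$ rather than merely subsequential, so that $P$, and therefore $Q$ and $u_\infty$, are unambiguously defined and the collapse profile is genuinely $t$-independent; and one must confirm that the abstract mild solution of \eqref{1312} agrees with the strong solution, so that evaluation at $t=1$ is legitimate. Everything else is a direct transcription of the already-verified hypotheses into Theorem~\ref{teodebenilan}.
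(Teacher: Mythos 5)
Your proposal is correct and follows essentially the same route as the paper: the paper's proof consists precisely of the five numbered verifications (homogeneity and $m$-accretivity of $\partial J^G_p$, the identification $C=K^G_{\infty}$, the density of $\bigcup_{\lambda>0}\lambda C$, the resolvent limit via Mosco convergence, and $u_0\in LC$) followed by an application of Theorem~\ref{teodebenilan} together with \cite[Theorem 3.6]{Brezis} for the strong-solution regularity. Your additional remarks on the full (non-subsequential) resolvent limit and the mild-versus-strong solution issue are exactly the points the paper relies on.
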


Following the same arguments of~\cite{EFG}, Problem~\eqref{1312} can be seen as a {\it weak} sandpile model to obtain the collapsing  of a sandpile $u_0$ that violates the slope condition $\Vert \nabla u_0 \Vert_{L^\infty( \nu_G \otimes m^G_x)}\le 1$.

\begin{proposition}\label{vpositiva}  Under the conditions of Theorem~\ref{teo.collapsing.intro} , if $0\le u_0\in L^2(V,\nu_G)$ then $v(t)\ge 0$. Moreover, it is nondecreasing in time.
\end{proposition}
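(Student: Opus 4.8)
The plan is to read \eqref{1312} through the characterization \eqref{e1Mazo} of the subdifferential of an indicator function: a strong solution $v$ satisfies, for a.e. $t\in\,]\tau,\infty[$, that $v(t)\in K^G_\infty$ and
\begin{equation}\label{vpos.VI}
\int_V\Big(\tfrac{v(t,x)}{t}-v_t(t,x)\Big)\big(w(x)-v(t,x)\big)\,d\nu_G(x)\le 0\qquad\forall\,w\in K^G_\infty.
\end{equation}
I would deduce both assertions from a single comparison statement: \emph{for every starting time $s_0\ge\tau$ with $\psi:=v(s_0)\ge0$ one has $v(t)\ge v(s_0)$ for all $t\ge s_0$}. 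The device is to insert the competitor $w:=v(t)\vee\psi$ in \eqref{vpos.VI} and run a Gronwall argument on the negative part of $v-\psi$.

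The key step is the differential inequality for $\eta(t):=\big(\psi-v(t)\big)^+$. First I would note that $K^G_\infty$ is stable under $\vee$ (this is the case $k=0$ of the lattice property established in the proof of Proposition~\ref{CAopI}, and $v(t)\vee\psi\in L^2(V,\nu_G)$ since both entries are), so that $w=v(t)\vee\psi\in K^G_\infty$ is admissible and $w-v(t)=\eta(t)\ge0$. Using that $t\mapsto\tfrac12\|\eta(t)\|^2$ is absolutely continuous with $\frac{d}{dt}\tfrac12\|\eta\|^2=-\int_V v_t\,\eta\,d\nu_G$ (as $\psi$ is constant in $t$), and splitting the remaining integral on $\{v<\psi\}$ where $v=\psi-\eta$, \eqref{vpos.VI} yields
\begin{equation}\label{vpos.diff}
\frac{d}{dt}\,\tfrac12\|\eta(t)\|^2\ \le\ \frac1t\,\|\eta(t)\|^2-\frac1t\int_V\psi\,\eta\,d\nu_G\ \le\ \frac1t\,\|\eta(t)\|^2,
\end{equation}
the last inequality being the crucial one and using $\psi\ge0$, $\eta\ge0$. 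Since $\eta(s_0)=0$, Gronwall's inequality (for instance, $t\mapsto\|\eta\|^2/t^2$ is nonincreasing) forces $\eta\equiv0$, i.e. $v(t)\ge\psi$ on $[s_0,\infty)$.

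Finally I would apply the lemma twice. Taking $s_0=\tau$ and $\psi=v(\tau)=\tau u_0$, which is nonnegative because $u_0\ge0$ and $\tau=L^{-1}>0$, gives $v(t)\ge\tau u_0\ge0$ for all $t\ge\tau$; this is the positivity. Once positivity is in hand, $\psi=v(s_0)\ge0$ for \emph{every} $s_0\ge\tau$, so the lemma applies at an arbitrary starting time and gives $v(t)\ge v(s_0)$ for $t\ge s_0$, which is exactly monotonicity in time. I expect the main obstacle to be the nonautonomous source $v/t$ in \eqref{1312}, which a priori destroys the comparison principle available for the autonomous operator $\partial I_{K^G_\infty}$; the computation leading to \eqref{vpos.diff} shows it is harmless precisely when the comparison function $\psi$ is nonnegative, and this is the structural reason the two facts must be proved in the order positivity-then-monotonicity. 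The only routine technical point to check with care is the absolute continuity and chain rule for $\|\eta(t)\|^2$, which is standard for strong ($W^{1,2}_{\mathrm{loc}}$) solutions valued in the Hilbert space $L^2(V,\nu_G)$.
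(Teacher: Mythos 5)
Your proof is correct, but it takes a more hands-on route than the paper. The paper gets positivity by feeding the pair (zero solution, source $0$) and ($v$, source $v/t$) into the abstract $T$-accretivity contraction principle (Theorem~\ref{ContPrinTT}), which directly yields $\Vert (-v(t))^+\Vert \le \int_\tau^t \frac1s \Vert (-v(s))^+\Vert\,ds$ and concludes by Gr\"onwall; monotonicity is then obtained in one line by invoking Theorem~\ref{ExitUniqinfty}, legitimate because the source $v/t$ is now known to be nonnegative. You instead prove a single comparison lemma by testing the variational inequality with the lattice competitor $v(t)\vee\psi$ and running Gr\"onwall on $\Vert(\psi-v(t))^+\Vert^2$; this replaces both abstract ingredients (the bracket $[\cdot,\cdot]_+$ and the order-preservation theorem) at the cost of needing the lattice stability of $K^G_\infty$ (which is indeed available from the proof of Proposition~\ref{CAopI}) and the chain rule for the positive part. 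The underlying structure is identical in both arguments — positivity must come first, precisely so that the nonautonomous source $v/t$ can be treated as a nonnegative forcing — and your computation makes explicit where that sign is used, which the paper leaves implicit. One small point worth adding to your write-up: for the competitor at the initial stage you need $\psi=\tau u_0\in K^G_\infty$, not just $\psi\ge 0$; this holds because $\tau=L^{-1}$ normalizes $\Vert\nabla(\tau u_0)\Vert_{L^\infty(\nu_G\otimes m^G_x)}$ to $1$ (and is in any case required for the strong solution of \eqref{1312} to exist).
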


\begin{proof}
  Applying Theorem \ref{ContPrinTT}, for $t > \tau$, we have
$$\Vert (- v(t))^+ \Vert \leq \Vert (- \tau u_0)^+
  \Vert + \int_{\tau}^t \frac{1}{s} \Vert (- v(s) )^+ \Vert \, ds = \int_{\tau}^t \frac{1}{s} \Vert (- v(s) )^+ \Vert \, ds.$$
Hence, by Gr\"{o}nwall's Lemma, $$\Vert (- v(t))^+ \Vert\le 0,$$
from where $v(t)\ge 0$. Finally, from Theorem~\ref{ExitUniqinfty}, $v_t \geq 0$ for all $t \geq \tau$.
\end{proof}

\begin{proposition}\label{remL1}\rm
 Under the assumptions of Theorem~\ref{teo.collapsing.intro}  we have that, if moreover $u_0\in L^1(V,\nu_G)$, then $u_\infty \in L^1(V,\nu_G)$.
 \end{proposition}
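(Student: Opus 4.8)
The plan is to derive a bound on the $L^1(V,\nu_G)$-norm of the approximating solutions $u_p$ of $P_p^G(u_0,0)$ that is uniform in both $t$ and $p$, and then to transfer this bound to $u_\infty$ by letting $p\to\infty$. The limit passage is the easy half, and it is where the discrete structure of $V$ helps: Theorem~\ref{teo.collapsing.intro} gives $u_p(t,\cdot)\to u_\infty$ in $L^2(V,\nu_G)$, and since every vertex satisfies $d_x>0$, convergence in $L^2(V,\nu_G)$ forces the pointwise convergence $u_p(t,x)\to u_\infty(x)$ for each fixed $x\in V$. Applying Fatou's lemma to the nonnegative sums $\sum_{x\in V}|u_p(t,x)|\,d_x$ then yields $\|u_\infty\|_{L^1(V,\nu_G)}\le\liminf_{p\to\infty}\|u_p(t)\|_{L^1(V,\nu_G)}$ for any fixed $t>0$.

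The heart of the argument is therefore the uniform estimate $\|u_p(t)\|_{L^1(V,\nu_G)}\le\|u_0\|_{L^1(V,\nu_G)}$ for all $t\ge 0$ and all $p\ge 3$, which I would obtain from the complete accretivity of $\partial J_p^G$ proved in Theorem~\ref{CharactTh}. Recall that complete accretivity gives, for the resolvent $J_\lambda=(I+\lambda\,\partial J_p^G)^{-1}$, the relation $J_\lambda u-J_\lambda\hat u\ll u-\hat u$; choosing $j(s)=|s|$, which belongs to $J_0$, in the definition of $\ll$ produces the $L^1$-contraction $\|J_\lambda u-J_\lambda\hat u\|_{L^1(V,\nu_G)}\le\|u-\hat u\|_{L^1(V,\nu_G)}$, and via the Crandall--Liggett exponential formula the semigroup generated by $\partial J_p^G$ inherits the same $L^1$-contraction. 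Since $\nabla 0\equiv 0$ gives $-\Delta_p^G 0=0$, we have $(0,0)\in\partial J_p^G$, so the zero function is the solution of $P_p^G(0,0)$ by the uniqueness in Theorem~\ref{ExitUniq}; comparing $u_p(t)$ against this stationary solution, i.e.\ taking $\hat u=0$, gives exactly the claimed bound.

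Putting the two steps together, for any fixed $t>0$ one gets $\|u_\infty\|_{L^1(V,\nu_G)}\le\liminf_{p\to\infty}\|u_p(t)\|_{L^1(V,\nu_G)}\le\|u_0\|_{L^1(V,\nu_G)}<\infty$, which is the assertion. The step I expect to require the most care, and which I would state explicitly rather than treat as routine, is the passage from complete accretivity of an operator defined on $L^2(V,\nu_G)$ to a genuine $L^1$-contraction of its semigroup tested against the zero equilibrium. Here the discrete setting is an advantage: all the relevant quantities are absolutely convergent weighted sums, so no truncation or density approximation of $u_0$ is needed and the transfer of the contraction from the resolvents to the semigroup can again be handled by the pointwise convergence on $V$ together with Fatou's lemma. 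Beyond this bookkeeping I do not anticipate any genuine obstacle.
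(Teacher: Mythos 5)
Your proof is correct and follows essentially the same route as the paper: both arguments rest on the complete accretivity of $\partial J^G_p$ (Theorem~\ref{CharactTh}) to obtain $u_p(t)\ll u_0$, which with $j(s)=|s|$ gives the uniform bound $\Vert u_p(t)\Vert_{L^1(V,\nu_G)}\le\Vert u_0\Vert_{L^1(V,\nu_G)}$, and then pass this bound to the limit. The only difference is that the paper closes the limit step by citing a proposition of B\'{e}nilan--Crandall, whereas you justify it directly via pointwise convergence on the vertices and Fatou's lemma, which is a perfectly valid (and more self-contained) substitute in the discrete setting.
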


\begin{proof}
Since the operator $\partial J^G_{p}$ is completely accretive, we have that the solution $u_p$  to $P_{p}^G(u_0,0)$ satisfy   $u_p(t) \ll u_0$  for all $t \geq 0$. Then, by \cite[Propposition 2.11]{BCr2}, we have $u_\infty \in L^1(V,\nu_G)$.
\end{proof}

\subsection{Mass transport interpretation}\label{Sect.Mass.Transfer}

In  \cite{EFG} or \cite{Evans}, a mass transfer interpretation of the
limit problem $P_\infty (u_0,f)$ is described. Our aim in this section is to give also an
  explanation of the limit problem $P^G_\infty (u_0,f)$ by using mass transport theory.

Consider the metric space $(V,d_G)$, where $d_G$ was defined in Subsection~\ref{example.graphs}.  Let   $f_0,  f_1  $ be two nonnegative $L^1$ functions in $V$ with equal mass, i.e.,
$$\int_V f_0(x) d\nu_G(x) =  \int_V f_1(x) d\nu_G(x).$$
  Let $\mathcal{A}(f_0,f_1)$ be the set of transport maps pushing $f_0$ to $f_1$, that is, the set of Borel maps
$T: V \to V$ such that $\int_V f_0\circ Td\nu_G = \int_Vf_1 \nu_G$. The {\it Monge problem} consists in finding a map $T^* \in \mathcal{A}(f_0,f_1)$
which minimizes the cost functional
$$  \int_V  f_0(x)d_G(x,T(x)) d \nu_G(x)$$
in the set $\mathcal{A}(f_0,f_1)$. The map $T^*$ is called an {\it optimal transport map} pushing $f_0$ to $f_1$.

A relaxation of the Monge problem, proposed by Kantorovich \cite{K} is
the {\it Monge-Kantorovich transport problem}
 associated to the distance $d_G$ given by:
minimize
\begin{equation}\label{minimiz.M-K}
    \int_{V \times V} d_G(x,y) d\gamma(x,y)
\end{equation}
among  all transport plans between $f_0$ and $f_1$, that is, Radon measures in $V \times V$, such that $\pi_1 \sharp \gamma =f_0 d\nu_G$ and $\pi_2 \sharp \gamma =f_1 d\nu_G$, that we denote by $\Pi(f_0,f_1)$.
  It is well-known (see \cite{A}) that
\begin{equation}\label{inequ1}
\inf_{\gamma \in \Pi(f_0,f_1)}    \int_{V \times V} d_G(x,y) d\gamma(x,y)  \leq \inf_{ T \in \mathcal{A}(f_0,f_1)}   \int_V  f_0(x)d_G(x,T(x)) d \nu_G(x) . \end{equation}
On the other hand, since $d_G$ is a lower semicontinuous cost function, it is well known the
existence of an {\it optimal transport plan}, that is, a $\gamma^*  \in \Pi(f_0,f_1)$ such that
$$\int_{V \times V} d_G(x,y) d\gamma^*(x,y) = \inf_{\gamma \in \Pi(f_0,f_1)}    \int_{V \times V} d_G(x,y) d\gamma(x,y).$$

 The dual formulation of the Monge-Kantorovich transport problem  is given by
$$
\max_{u \in K_{d_G}} \int_{V} u(x) (f_1 (x)- f_0(x)) d\nu_G(x)
$$
where
$$
K_{d_G}:= \left\{ u :V \rightarrow \R \ : \
 \vert u(x) - u(y) \vert \leq d_G(x,y) \ \ \forall \,x,y \in V \right\}.$$
The Kantorovich duality Theorem (see \cite{Villani}) establishes that
\begin{equation}\label{transesta}
 \inf_{\gamma \in \Pi(f_0,f_1)}    \int_{V \times V} d_G(x,y) d\gamma(x,y)= \max_{u \in K_{d_G}} \int_{V} u(x) (f_1 (x)- f_0(x)) d\nu_G(x).
\end{equation}
 The function $u$ that maximize the above problem is called a {\it Kantorovich potential} of the transport problem \eqref{minimiz.M-K}.

  Working as in the proof of \cite[Lemma 9]{CAFAetal}   (see also~\cite[Lemma 2.9]{IMRTpasos}), we have the following
 {\it Dual Criteria for Optimality}.
 \begin{lemma}\label{CritOpt}
 \noindent $(1)$ If $u^* \in K_{d_G}$ and $T^* \in \mathcal{A}(f_0,f_1)$ satisfy
 \begin{equation}\label{E1CritOpt}
 u^*(x)- u^*(T^*(x)) = d_G(x,T^*(x)) \quad \hbox{for all} \ \ x \in \hbox{supp}(f_0),
 \end{equation}
 then
 \begin{itemize}
 \item[(i)] $u^*$ is a kantorovich potential for the metric $d_G$.

  \item[(ii)] $T^*$ is an optimal map for the Monge problem associated to the metric $d_G$.
  \item[(iii)] $$\inf_{ T \in \mathcal{A}(f_0,f_1)}   \int_V  f_0(x) d_G(x,T(x)) d \nu_G(x)  = \sup_{u \in K_{d_G}} \int_{V} u(x) (f_1 (x)- f_0(x)) d\nu_G(x).$$
  \end{itemize}
  \noindent $(2)$ Under (iii), every optimal map $\hat{T}$ for the Monge problem associated to the metric $d_G$ and
Kantorovich potential $\hat{u}$ for the metric  $d_G$ satisfy \eqref{E1CritOpt}.

\end{lemma}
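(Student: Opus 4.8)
The plan is to run the classical complementary-slackness argument of optimal transport, using that on a countable graph the measure $\nu_G$ is purely atomic, so that ``$f_0$-almost every $x$'' means exactly ``every vertex $x$ with $f_0(x)>0$'', i.e. every $x\in\mathrm{supp}(f_0)$; this removes all regularity concerns and makes the pointwise condition \eqref{E1CritOpt} interchangeable with its integral form. The backbone is a single identity. For an arbitrary $u\in K_{d_G}$ and an arbitrary $T\in\mathcal{A}(f_0,f_1)$, pushing $f_0\,d\nu_G$ forward to $f_1\,d\nu_G$ through $T$ gives
\[
\int_V u(x)\,\big(f_0(x)-f_1(x)\big)\,d\nu_G(x) = \int_V \big(u(x)-u(T(x))\big)\,f_0(x)\,d\nu_G(x) \le \int_V d_G(x,T(x))\,f_0(x)\,d\nu_G(x),
\]
the inequality being the Lipschitz bound $u(x)-u(T(x))\le d_G(x,T(x))$, with equality if and only if $u(x)-u(T(x))=d_G(x,T(x))$ for $f_0$-a.e. $x$. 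Since $K_{d_G}$ is symmetric under $u\mapsto -u$, the supremum of the left-hand functional coincides with the dual supremum in \eqref{transesta}; combining with \eqref{inequ1} and \eqref{transesta} yields the sandwich
\[
\sup_{u\in K_{d_G}}\int_V u\,(f_0-f_1)\,d\nu_G = \inf_{\gamma\in\Pi(f_0,f_1)}\int_{V\times V} d_G\,d\gamma \le \inf_{T\in\mathcal{A}(f_0,f_1)}\int_V f_0(x)\,d_G(x,T(x))\,d\nu_G(x).
\]

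For Part~(1), I insert the given pair $(u^*,T^*)$ into the identity above. Hypothesis \eqref{E1CritOpt} states precisely that the integrand $u^*(x)-u^*(T^*(x))$ equals $d_G(x,T^*(x))$ on $\mathrm{supp}(f_0)$, so the inequality becomes an equality and the dual value at $u^*$ equals the Monge cost of $T^*$. Because the dual supremum can never exceed the Kantorovich infimum, which never exceeds the Monge cost of any admissible map, this one coincidence forces every inequality in the sandwich to be an equality. Reading these equalities off gives at once: (i) $u^*$ maximizes the dual problem, hence is a Kantorovich potential; (ii) $T^*$ attains the Monge infimum, hence is optimal; and (iii) the Monge infimum equals the dual supremum.

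For Part~(2), assume (iii) and take any optimal map $\hat T$ and any Kantorovich potential $\hat u$. Then $\int_V \hat u\,(f_0-f_1)\,d\nu_G$ equals the dual supremum, which by (iii) equals the Monge infimum $=\int_V f_0(x)\,d_G(x,\hat T(x))\,d\nu_G(x)$. Subtracting the identity of the first display applied to $(\hat u,\hat T)$ gives
\[
\int_V \big(d_G(x,\hat T(x))-\big(\hat u(x)-\hat u(\hat T(x))\big)\big)\,f_0(x)\,d\nu_G(x)=0,
\]
an integral of a pointwise nonnegative function (nonnegative by the Lipschitz bound) against the nonnegative weight $f_0\,d\nu_G$. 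Hence the integrand vanishes $f_0$-a.e., that is $\hat u(x)-\hat u(\hat T(x))=d_G(x,\hat T(x))$ for every $x\in\mathrm{supp}(f_0)$, which is exactly \eqref{E1CritOpt}.

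The only real obstacle is bookkeeping rather than depth: one must apply the pushforward with the correct orientation of $T$ and track the sign in the dual functional (reconciled here through the symmetry $u\mapsto-u$ of $K_{d_G}$, which identifies maximizers of $\int u\,(f_0-f_1)\,d\nu_G$ with those of $\int u\,(f_1-f_0)\,d\nu_G$), and one must justify the final ``nonnegative integrand with zero integral vanishes a.e.'' step. On the atomic space $(V,\nu_G)$ this last step is immediate, since the integral is a sum of nonnegative terms $\big(d_G(x,\hat T(x))-(\hat u(x)-\hat u(\hat T(x)))\big)f_0(x)d_x$ over vertices, each of which must therefore be zero.
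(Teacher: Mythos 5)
Your proof is correct and is exactly the standard complementary-slackness argument that the paper itself does not write out but delegates to the cited references (\cite{CAFAetal}, Lemma 9, and \cite{IMRTpasos}, Lemma 2.9): the key identity $\int_V u\,(f_0-f_1)\,d\nu_G=\int_V\big(u(x)-u(T(x))\big)f_0(x)\,d\nu_G(x)\le\int_V d_G(x,T(x))f_0(x)\,d\nu_G(x)$, the sandwich built from \eqref{inequ1} and \eqref{transesta}, and the observation that on the atomic space $(V,\nu_G)$ the ``$f_0$-a.e.'' conclusions are genuinely pointwise on $\mathrm{supp}(f_0)$ are precisely the intended route. The one place where your bookkeeping is slightly too quick is the sign reconciliation: the symmetry $u\mapsto-u$ identifies the two \emph{suprema} but sends a maximizer of $\int u\,(f_0-f_1)\,d\nu_G$ to a maximizer of $\int u\,(f_1-f_0)\,d\nu_G$ rather than fixing it, so under the paper's literal definition of Kantorovich potential (a maximizer of $\int u\,(f_1-f_0)\,d\nu_G$) hypothesis \eqref{E1CritOpt} exhibits $-u^*$, not $u^*$, as the potential, and likewise in part (2) your line ``$\int_V\hat u\,(f_0-f_1)\,d\nu_G$ equals the dual supremum'' presupposes the opposite sign convention; this ambiguity is inherited from the statement itself (the paper later applies the lemma with $\vert u(x)-u(T(x))\vert=d_G(x,T(x))$), and your argument is correct under the convention, clearly the intended one, that the potential maximizes $\int u\,d(\mathrm{source}-\mathrm{target})$, i.e.\ is determined only up to sign.
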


Let $u(t, \cdot)$ be the unique solution to
$P^G_{\infty}(u_0,f)$.  In the case $\nu_G(V) < \infty$, we have the following {\it conservation of the mass principle}:
\begin{equation}\label{ConMass1}\int_Vu_{\infty,t}(t,x)d\nu_G(x)= \int_Vf(t,x)d\nu_G(x) \quad  \hbox{for all $t \geq 0$}.\end{equation}
In fact, since $u(t, \cdot)$ is a unique solution to
$P^G_{\infty}(u_0,f)$, we have $u(t) \in K^G_\infty$ and
$$0 \geq \int_V (f(t,x) - u_t(t,x))(w(x) - u(t,x)) d\nu_G(x) \quad \hbox{for all} \ w \in K^G_{\infty}.$$
Now, since $u(t) \in K^G_\infty$ and $\nu_G(V) < \infty$, we have $w(x):= u(t) \pm\1_V \in K^G_\infty$, thus \eqref{ConMass1} holds.
Now,  when $\nu_G(V)$ is not finite, the conservation of mass also holds true.

\begin{theorem}\label{consmass}  Let $ u_0 \in K^G_\infty\cap L^1(V,\nu_G)$ and   $f \in L^2(0,T; L^2(V,\nu_G)\cap L^1(V,\nu_G))$. Then, if $u_{\infty}$ is the unique solution to
$P^G_{\infty}(u_0,f)$, we have
$$\int_Vu_{\infty,t}(t,x)d\nu_G(x)= \int_Vf(t,x)d\nu_G(x) \quad  \hbox{for all $t \geq 0$}.$$
\end{theorem}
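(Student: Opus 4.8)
The plan is to prove the mass conservation identity by approximating the unbounded domain situation with the finite-mass case that was already settled just above the theorem. The essential difficulty is that when $\nu_G(V)=\infty$ the constant function $\1_V$ is no longer in $L^2(V,\nu_G)$, so the test functions $w=u(t)\pm\1_V$ used in the finite case are not admissible. The remedy is to test with truncated constants and pass to a limit, exploiting the integrability hypotheses $u_0,f(t,\cdot)\in L^1(V,\nu_G)$ to control the tails.

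First I would fix an exhaustion of $V$ by finite sets $V_n\uparrow V$ (possible since $V$ is countable) and consider the cut-off functions $\phi_n:=\1_{V_n}$, which belong to $L^2(V,\nu_G)$ because each $V_n$ is finite. The key point is that $u(t)\pm\phi_n$ need \emph{not} lie in $K^G_\infty$, so I cannot directly insert them into the variational inequality \eqref{solution1}. Instead I would first establish that the solution itself stays in $L^1$: since $\partial J^G_p$ is completely accretive, the approximating solutions $u_p(t)$ satisfy $u_p(t)\ll u_0+\int_0^t f$, and passing to the limit via Theorem~\ref{convergencia.p.intro} together with the $L^1$-$L^2$ contraction estimate gives $u_\infty(t)\in L^1(V,\nu_G)$ with a quantitative bound; consequently $u_{\infty,t}(t,\cdot)\in L^1(V,\nu_G)$ as well, after also using $f(t,\cdot)\in L^1$.

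With integrability in hand, the plan is to integrate the equation $f(t,\cdot)-u_t(t,\cdot)\in\partial I_{K^G_\infty}(u(t,\cdot))$ tested against $\phi_n$ in a way that is legitimate. Concretely I would observe that, because $u(t)\in K^G_\infty$ and the slope constraint is preserved under adding a globally constant function, for any real $c$ the function $u(t)+c\phi_n$ fails the constraint only across edges joining $V_n$ to its complement; these edges are finite in number (local finiteness plus finiteness of $V_n$), so the defect is supported on a finite edge set. I would then use a slightly enlarged admissible competitor, replacing $\phi_n$ by a function that transitions smoothly from $1$ on $V_n$ to $0$ far away with slope at most the graph distance, guaranteeing membership in $K^G_\infty$, and plug $w=u(t)\pm$ (this modified cut-off) into \eqref{solution1}. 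This yields
\begin{equation}\label{eq:approxmass}
\left|\int_V (f(t,x)-u_t(t,x))\,\phi_n(x)\,d\nu_G(x)\right|\le \varepsilon_n,
\end{equation}
where $\varepsilon_n$ collects the contribution from the transition region.

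Finally I would let $n\to\infty$. By dominated convergence, using $f(t,\cdot)-u_t(t,\cdot)\in L^1(V,\nu_G)$ as the dominating envelope and $\phi_n\uparrow\1_V$ pointwise, the left-hand side of \eqref{eq:approxmass} converges to $\bigl|\int_V (f-u_t)\,d\nu_G\bigr|$, which is exactly the quantity we want to show vanishes. The crux is therefore to verify that the error term $\varepsilon_n\to 0$: this is where the $L^1$ hypotheses do the real work, since the transition region lives in $V\setminus V_n$, and the integrability of $f-u_t$ forces its mass on the far region to be negligible. I expect the main obstacle to be precisely the construction of admissible cut-offs whose transition defect is controlled by the tail of $f-u_t$ rather than by uncontrolled boundary fluxes; once that is arranged, the passage to the limit is routine and the conservation identity $\int_V u_{\infty,t}\,d\nu_G=\int_V f\,d\nu_G$ follows.
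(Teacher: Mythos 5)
Your plan founders at the step you yourself flag as the crux: the ``admissible cut-offs'' you want do not exist in general. If $u(t)\in K^G_{\infty}$ saturates the slope constraint on an edge $(x,y)$ in the transition region, i.e.\ $u(t,y)-u(t,x)=1$, then for \emph{any} function $\psi$ with $\psi(x)\neq\psi(y)$ one of $u(t)+\psi$, $u(t)-\psi$ violates $|{\cdot}(y)-{\cdot}(x)|\le 1$; there is simply no room to add a non-constant perturbation across a saturated edge, no matter how slowly $\psi$ varies. Since for general $u_0\in K^G_{\infty}\cap L^1$ the solution can have infinite support and can saturate the constraint on edges arbitrarily far out, neither $w=u(t)+\psi_n$ nor $w=u(t)-\psi_n$ is a legitimate competitor in \eqref{solution1}, and the approximate identity you call \eqref{eq:approxmass} is never obtained. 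The issue is not the size of the error term $\varepsilon_n$ (which the $L^1$ tails would indeed control) but the admissibility of the test function itself, so the $L^1$ hypotheses cannot rescue the argument as structured.

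The paper's proof avoids this by reorganizing the approximation. In Step 1 it assumes $u_0$ and $f(t)$ are bounded with finite support, constructs explicit super- and subsolutions $v,w$ (sandpiles growing at rate $M$ on the support and its nonlocal boundary) and uses the $T$-accretivity comparison of Theorem~\ref{ContPrinTT} to confine ${\rm supp}\,u_\infty(t,\cdot)$ to the finite set $\{x_i\}_i\cup\partial_{m^G}\{x_i\}_i$ for $0\le t\le 1/M$. Only then is the cut-off $\1_{A_t\cup\partial_{m^G}A_t}$ used: it drops from $1$ to $0$ exclusively across edges where $u(t)$ vanishes at both endpoints, so $u(t)\pm\1_{A_t\cup\partial_{m^G}A_t}\in K^G_\infty$ and the variational inequality yields exact mass conservation with no error term. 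Step 2 then approximates general data by finitely supported data — which itself requires a nontrivial ``staircase'' construction ($w_n=v_n\wedge u_0$ with $v_n$ decreasing by $1$ across successive shells $A_{n+k}\setminus A_{n+k-1}$) to keep the approximants inside $K^G_\infty\cap L^1$ — and passes to the limit using the complete accretivity of $\partial I_{K^G_\infty}$. If you want to salvage your outline, you would need to import exactly this support-localization idea; without it the cut-off argument cannot be made to work.
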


 Before giving its proof, let us give the relation of $u_\infty$ with mass transport. Let $0\le  u_0 \in K^G_\infty\cap L^1(V,\nu_G)$ and   $0\le f \in L^2(0,T; L^2(V,\nu_G)\cap L^1(V,\nu_G))$,
by Theorem~\ref{ExitUniqinfty} and the above Theorem~\ref{consmass},  if $u_\infty (t, \cdot)$ is the solution  of  problem
$P^G_\infty (u_0,f)$, then  $u_{\infty,t}\ge 0$ and   the masses of $u_t$ and $f$ are equal.  Now, since $K_{d_G} = K^G_{\infty}$, we have
$$0\geq \int_{V} (f (t,x) - u_{\infty,t}(t,x)) (v(x) - u_\infty(t,x))\, d\nu_G(x) \quad \hbox{for every} \ v \in K_{d_G},$$
hence
$$\int_{V} u_\infty(t,x)(f (t,x) - u_{\infty,t} (t,x)) \, d\nu_G(x) = \max_{v \in K_{d_G}} \int_{V} v(x) (f (t,x) - u_{\infty,t} (t,x)) \, d\nu_G(x).$$
Therefore, we have that $u_\infty(t,\cdot)$ is a Kantorovich potential for the transport problem, respect the distance $d_G$, between  the source $f(t,\cdot)$
 and the time derivative of the solution, $u_{\infty,t}
(t,x)$. Consequently, we conclude that the mass of sand added by the
source $f(t, \cdot)$ is transported (via $u_\infty(t, \cdot)$ as the
transport potential) to $u_{\infty,t}(t,\cdot)$ at each time~$t$.
 Consequently, we have the following result.
\begin{theorem}\label{TrabspSol}   Let  $ u_0 \in K^G_\infty\cap L^1(V,\nu_G)$ and   $0\le f \in L^2(0,T; L^2(V,\nu_G)\cap L^1(V,\nu_G))$. Let     $u \in C([0,T]: L^2(V,\nu_G)\cap L^1(V,\nu_G))\cap W_{\rm loc}^{1,2}(0,T;L^2(V,\nu_G))$,  $u(t) \in K^G_{\infty}$   for   $t \in ]0,T[$, such that $u(0,\cdot) = u_0$.   Then, $u$ is a strong solution of problem $P_{\infty}^G(u_0,f)$ if and only if $u(t, \cdot)$  is a Kantorovich potential for the transport problem, respect the distance $d_G$, between  the source $f(t,\cdot)$
 and the time derivative $u_{t}(t, \cdot)$.
\end{theorem}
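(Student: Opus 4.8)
The plan is to prove the biconditional by exploiting the characterization \eqref{e1Mazo} of the subdifferential of an indicator function together with the Kantorovich duality identity \eqref{transesta} and the mass conservation principle of Theorem~\ref{consmass}. The central observation, already recorded in the discussion preceding the statement, is the set identity $K_{d_G} = K^G_{\infty}$: a function $u$ satisfies $|u(x)-u(y)|\le 1$ whenever $x\sim y$ if and only if it satisfies $|u(x)-u(y)|\le d_G(x,y)$ for all $x,y\in V$. One direction of this identity is immediate (restrict to adjacent vertices, noting $d_G(x,y)=1$ there), and the converse follows by summing the unit slope bound along a shortest path and using the triangle-type telescoping $|u(x)-u(y)|\le\sum_{k}|u(x_k)-u(x_{k+1})|\le n = d_G(x,y)$. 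I would state this identity explicitly at the outset since it is what lets the variational inequality defining strong solutions be reinterpreted as an optimality condition in the transport problem.

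For the forward implication, I would assume $u$ is a strong solution of $P_{\infty}^G(u_0,f)$. By definition this gives $u(t)\in K^G_\infty = K_{d_G}$ and the variational inequality
$$
\int_V \bigl(f(t,x)-u_t(t,x)\bigr)\bigl(v(x)-u(t,x)\bigr)\,d\nu_G(x)\le 0\quad\text{for all }v\in K_{d_G}.
$$
Rearranging, this says
$$
\int_V u(t,x)\bigl(f(t,x)-u_t(t,x)\bigr)\,d\nu_G(x)\ge \int_V v(x)\bigl(f(t,x)-u_t(t,x)\bigr)\,d\nu_G(x)
$$
for every competitor $v\in K_{d_G}$, i.e.\ $u(t,\cdot)$ maximizes $v\mapsto\int_V v(f-u_t)\,d\nu_G$ over $K_{d_G}$. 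Here I must check that this maximization is precisely the dual Monge--Kantorovich problem \eqref{transesta} with source $f_0=u_t(t,\cdot)$ and target $f_1=f(t,\cdot)$; the key point is that these two densities are nonnegative (by $f\ge 0$ and $u_t\ge 0$ from Theorem~\ref{ExitUniqinfty}) and of equal mass (by Theorem~\ref{consmass}), so the transport problem between them is well posed and the attained maximum equals the transport cost. Hence $u(t,\cdot)$ is a Kantorovich potential for the transport from $f(t,\cdot)$ to $u_t(t,\cdot)$.

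For the converse, I would reverse this chain: if $u$ has the stated regularity, $u(t)\in K^G_\infty$, $u(0)=u_0$, and $u(t,\cdot)$ is a Kantorovich potential for the transport between $f(t,\cdot)$ and $u_t(t,\cdot)$, then by \eqref{transesta} the function $u(t,\cdot)$ realizes $\max_{v\in K_{d_G}}\int_V v(f-u_t)\,d\nu_G$, which is exactly the variational inequality \eqref{solution1} after using $K_{d_G}=K^G_\infty$; combined with $u(t)\in K^G_\infty$ and the initial condition this is the definition of a strong solution. The step I expect to require the most care is the bookkeeping that the maximization over $K_{d_G}$ appearing in the variational inequality genuinely coincides with the dual transport problem \eqref{transesta}, namely verifying the equal-mass and nonnegativity hypotheses on $f(t,\cdot)$ and $u_t(t,\cdot)$ at (almost) every fixed $t$ and invoking Theorem~\ref{consmass} to license the duality identity; once $K_{d_G}=K^G_\infty$ is in hand the remaining equivalence is a direct unwinding of definitions rather than a computation.
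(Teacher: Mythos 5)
Your proposal is correct and takes essentially the same route as the paper: the paper's argument (contained in the discussion immediately preceding the statement) likewise identifies $K_{d_G}$ with $K^G_{\infty}$, reads the variational inequality \eqref{solution1} as saying that $u(t,\cdot)$ maximizes $v\mapsto\int_V v\,(f-u_t)\,d\nu_G$ over $K_{d_G}$, and uses Theorem~\ref{ExitUniqinfty} and Theorem~\ref{consmass} to supply the nonnegativity and equal-mass hypotheses under which the Kantorovich duality \eqref{transesta} converts that maximization into the statement that $u(t,\cdot)$ is a Kantorovich potential, with the converse obtained by unwinding the same chain.
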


Let us now proof that mass is preserved.  We will use the following notation. For a $A\subset V$, its nonlocal boundary is
$$\partial_{m^G}A=\{y\in V\setminus A :y\sim x \hbox{ for some }x\in A\}.$$

\begin{proof}[Proof of Theorem~\ref{consmass}]  {\it Step 1}. Suppose first that $u_0$ and $f(t)$ have finite support and are bounded. Then,
we can assume without loss of generality that $u_0$ and $f$ are like follows  (by adding null constants $\alpha_i$ or null functions $f_i$):
$$u_0= \sum_{i=1}^n \alpha_i \1_{\{x_i\}} \quad \hbox{and} \quad f(t)=  \sum_{i=1}^n  f_i(t)\1_{\{x_i\}},$$
 with $\alpha_i \in \mathbb{R}$ and $-M\le f_i(t)\le M$ for all $ i=1, \dots, n,$ for some  $M>0$.

 Let us see first that we can estimate the support $u_\infty(t,.)$ and of $u_{\infty,t}(t,.)$. Set $\{y_j\}_{j=1}^k=\partial_{m^G}\{x_i\}_{i=1}^n$. Take
$$v(t,x)= \sum_{i=1}^n (\alpha_i+Mt) \1_{\{x_i\}}(x)+\sum_{j=1}^k Mt \1_{\{y_j\}}(x),$$ then we have that $v(t,.)\in K^G_{\infty}$ for $0\le t\le \frac{1}{M}$, and
$$v_t(t,x)=\sum_{i=1}^n M \1_{\{x_i\}}(x)+\sum_{j=1}^k M \1_{\{y_j\}}(x)=:\tilde f(t,x).$$
Then, from $0\le t\le \frac{1}{M}$, $v$ is solution to $P^G_{\infty}(u_0,\tilde f)$. Now, since $f\le \tilde f,$ we have that
$$u_\infty(t,x)\le v(t,x)\quad\hbox{for} \ \  0\le t\le\frac{1}{M}.$$
Indeed, applying Theorem \ref{ContPrinTT}, we have
$$\Vert (u_\infty(t) - v(t))^+ \Vert \leq \Vert (u_0 - u_0)^+
  \Vert + \int_0^t \Vert (f(\tau) - \tilde f(\tau))^+\Vert \, d \tau \leq 0.$$
 Similarly, if
$$w(t,x)= \sum_{i=1}^n (\alpha_i-Mt) \1_{\{x_i\}}(x)-\sum_{j=1}^k Mt \1_{\{y_j\}}(x),$$  then $w(t,.)\in K^G_{\infty}$ for $0\le t\le \frac{1}{M}$, and
$$w_t(t,x)=-\sum_{i=1}^n M \1_{\{x_i\}}(x)-\sum_{j=1}^k M \1_{\{y_j\}}(x)=:\hat f(t,x).$$
Then, from $0\le t\le \frac{1}{M}$, $w$ is solution to $P^G_{\infty}(u_0,\hat f)$. Now,  $f\ge \hat f,$ and we get
$$u_\infty(t,x)\ge w(t,x)\quad\hbox{for} \ \  0\le t\le\frac{1}{M}.$$
  Therefore, $$ w(t)\le u_\infty(t) \leq v(t)  \quad\hbox{for} \ \ 0\le t\le\frac{1}{M}.$$
Hence the supports of $u_\infty(t,.)$ and $u_{\infty,t}(t,.)$ are contained in $\{x_i\}_{i=1}^n\cup \partial_{m^G}\{x_i\}_{i=1}^n.$

Since $u(t, \cdot)$ is a unique solution to
$P^G_{\infty}(u_0,f)$, we have $u(t) \in K^G_\infty$ and
$$0 \geq \int_V (f(t,x) - u_t(t,x))(w(x) - u(t,x)) d\nu_G(x) \quad \hbox{for all} \ w \in K^G_{\infty}.$$

 Let $A_t$ be the support of $u(t,.)$.  Then   $w(x):= u(t) \pm \1_{A_t \cup \partial_{m^G}(A_t)} \in K^G_\infty$  for $0 \leq t \le \frac{1}{M}$ and consequently, we get
$$\int_Vu_{\infty,t}(t,x)d\nu_G(x)= \int_Vf(t,x)d\nu_G(x) \quad  \hbox{for  $0 \leq t \le \frac{1}{M}$ }.$$
 We can repeat the above argument to cover any time interval.

    {\it Step 2.} Consider now the general case. It is easy to see that there exists  $0 \leq f_n(t)\in  L^\infty(0,T; L^\infty(V, \nu_G))$ with finite support such that $f_n \to f$ in $L^2(0,T; L^1(V, \nu_G))$.  Let us see that  there exist $ u_{0n} \in K^G_{\infty} \cap L^1(V, \nu_G)$ with ${\rm supp}(u_{0n})$ finite such that $u_{0n} \to u_0$ in $L^1(V, \nu_G)$. We prove this in two steps.
Firstly we approximate $u_0$ by a sequence in $w_n\in K^G_{\infty} \cap L^1(V, \nu_G)$ such that, for each $n$,  $\hbox{supp}(w_n{\!}^+)$ has finite support. Indeed,
     let $x_1\in V$ and define
   $$A_1=\{x_1\},$$
   $$A_2=A_1\cup \partial_{m^G}A_1,$$
   $$A_n=A_{n-1}\cup \partial_{m^G}A_{n-1}\quad \hbox{for any } n\in \mathbb{N}.$$
    Observe that, by connectedness, $V=\cup_{n=1}^\infty A_n$. Define now, for each $n\in \mathbb{N}$,
 $$ m_n=\max_{x\in A_n}u_0(x)^+, \quad k_n=[m_n],$$
 $$v_n(x)=k_n\chi_{A_n}+(k_n-1)\chi_{A_{n+1}\setminus A_{n}}+ \cdots +1\chi_{A_{n+k_n-1}\setminus A_{n+k_n-2}}+0\chi_{A_{n+k_n}\setminus A{n+k_n-1}},$$
 and
 $$w_n=v_n \wedge u_0.$$
 Then, it is easy to see that  $\{w_n\}_n$ is a nondecreasing sequence,  bounded from above by $u_0{\!}^+$, and converging punctually to $u_0$.
    Hence, by the dominated convergence theorem,  $w_{n} \to u_0$ in $L^1(V, \nu_G)$.
Moreover, we have that
 each $w_n\in K^G_\infty\cap L^1(V,\nu_G)$   (see the proof of Proposition~\ref{CAopI}) and the support of $w_n{\!}^+$ is finite.
 In the second step,  for any $w\in K^G_{\infty} \cap L^1(V, \nu_G)$  whose nonnegative part has finite support, we can find, working in a similar way,  $ \widetilde w_n\in  K^G_\infty\cap L^1(V,\nu_G)$ with $\tilde w_n{\!}^-$ having finite support, and hence with $w_n$ having finite support, such that $\widetilde w_{n} \to w$ in $L^1(V, \nu_G)$.
  Consequently, we can find $ u_{0n} \in K^G_{\infty} \cap L^1(V, \nu_G)$ with ${\rm supp}(u_{0n})$ finite such that $u_{0n} \to u_0$ in $L^1(V, \nu_G)$.

  Let $u_n(t)$ be the solution of problem $P^G_{\infty}(u_{0n},f_n)$. By {\it Step 1}, we have
 \begin{equation}\label{Consmassn}
 \int_V u_n(t) d \nu_G = \int_V u_{0n} d \nu_G + \int_0^t \int_V f_n(s) ds \,  d \nu_G.
 \end{equation}
 Now, by the complete accretivity of $\partial I_{K^G_{\infty}}$, we have $u_n(t) \to u_\infty(t)$ in $L^1(V, \nu_G)$ as $n \to \infty$. Then, taking limits in \eqref{Consmassn}, we get

 $$\int_V u_\infty(t) d \nu_G = \int_V u_{0} d \nu_G + \int_0^t \int_V f(s) ds \,  d \nu_G,$$
 and consequently
 $$\int_Vu_{\infty,t}(t,x)d\nu_G(x)= \int_Vf(t,x)d\nu_G(x) \quad  \hbox{for all} \ \  t \geq 0.$$
\end{proof}

\begin{remark}\label{remsoporte}\rm
We want to remark  that the mass conservation principle can be  used independently on subgraphs,  that is,
while the sandpile growth of each subgraph is independent of each other.  Example~\ref{ej23} illustrates this observation.
\end{remark}

\subsection{Explicit solutions}\label{sect.explicit}

 In this section we show some explicit simple examples of solutions to the  sandpile model
\begin{equation}\label{NOnLOpExpl}
P_{\infty}^G(u_0,f)\quad\left\{\begin{array}{l} \displaystyle f(t,
\cdot)- u_t
(t,.) \in \partial I_{K^G_{\infty}} (u(t,.)), \ \ \ \hbox{ a.e.} \ t \in ]0,T[, \\[10pt]
u(0,x)=u_0 (x)
\end{array}\right.
\end{equation}
 that illustrate  the dynamic involved in this model.

In order to verify that a function $u(t,x)$ is a solution to
$P_{\infty}^G(u_0,f)$ we need to check that
\begin{equation}\label{def.solution}
I_{K^G_{\infty}} (v) \ge I_{K^G_{\infty}}  (u) + \langle
f-u_t , \ v-u \rangle, \qquad \mbox{ for all } v \in L^2 (V, \nu).
\end{equation}
To this end we can assume that $v \in K^G_{\infty} $ (otherwise $K^G_{\infty}
(v) = + \infty$ and then \eqref{def.solution} becomes trivial). Therefore, we need to
show
$$
    u(t,\cdot) \in K^G_{\infty}
$$
and
\begin{equation}\label{def.solution.3}
    0\geq \int_{V} (f (t,x) - u_t (t,x)) (v(x) - u(t,x))\, d\nu_G(x) \qquad \hbox{for every $v \in K^G_{\infty}$}.
\end{equation}
  By  the mass conservation principle (Theorem \ref{consmass}),  we have that
$$\int_Vf (t,x)\, d\nu_G(x) =\int_V u_t (t,x))\, d\nu_G(x),$$
that is,
$$\sum_{x \in V} f(t,x)d_x=\sum_{x \in V} u_t(t,x)d_x.$$
This principle, joint the fact that solutions must belong to $K^G_{\infty}$, gives us a way to find such solutions.

\begin{example} {\rm Let us consider, the weighted graph $\Z$ with weights $$w_{xy} =\left\{ \begin{array}{ll} 1 & \hbox{if } \vert x - y \vert =1,\\[8pt]
0&\hbox{otherwise}.\end{array}\right.$$  We take
as source the function
$$
    f(t,x) = \alpha \1_{\{0 \}}(x),
    \quad   \alpha> 0,
$$
and as initial datum
$$
    u_0 (x) = 0.
$$
Let us find the sandpile growing solution to
$P_{\infty}^G(u_0,f)$ by looking at its evolution between
some critical times.

$\bullet$ First, for small times, the solution to
$P_{\infty}^G(u_0,f)$ is clearly given by
$$
u(t,x) =  \alpha t\1_{\{0 \}}(x), \quad \hbox{for} \ 0 = t_0 \leq t < t_1 = \frac{1}{\alpha}.
$$
Remark that $t_1 = \frac{1}{\alpha}$ is the first time when
$u(t,x) = 1$ in $0$. It is immediate that $u(t,\cdot)
\in K^G_{\infty}$ and $u_t(t,x) = f(t,x)$, then
\eqref{def.solution.3} holds.

 Observe that for a source $f(t,x)=\tilde f(t)\1_{\{ 0\}}$ then $u(t,x) =  \int_0^t \tilde f(\tau)d\tau\1_{\{0 \}}(x)$, for $0 \leq t < t_1$ with $\int_0^{t_1}\tilde f(\tau)d\tau=1$. We  only give the examples in the simple situation of constant sources.

$\bullet$ For times greater than $t_1$ the support of the solution is
greater than the support of~$f$. Indeed the solution can not be
larger than $1$ in $0$
without being larger than zero in the adjacent vertices $x= \pm1$;
more concretely, it must belong to $K^G_{\infty}$. So, let us  see that  the sandepile growing solution has the form:
$$u(t,x) = \displaystyle  \widetilde\alpha(t)\1_{\{-1 \}}(x)+ \left(1 + \widetilde\alpha(t) \right) \1_{\{0 \}}(x) +  \widetilde\alpha(t)\1_{\{1 \}}(x),$$
(other vertices are not involved at this step), with $\widetilde\alpha(t_1)=0$. By using the mass conservation principle we have that (the same factor $2$ in all the terms comes from the fact that the weighted degree $d_x$   is equal to 2 for all $x$), such candidate must verify
$$\left(0-\widetilde\alpha'(t)\right)2+\left(\alpha-\widetilde\alpha'(t) \right)2+\left(0-\widetilde\alpha'(t)\right)2=0,$$
that is $$6 \widetilde\alpha'(t)=2\alpha,$$ that joint the initial condition
$\widetilde\alpha(t_1)=0$ gives us:
$$
\widetilde\alpha(t)=\frac{\alpha}{3}(t-t_1).
$$
Then,  we expect that the solution is given by
\begin{equation}\label{ec.u.2}
\begin{array}{r}u(t,x) = \displaystyle \frac{\alpha}{3}(t-t_1) \1_{\{-1 \}}(x)+ \left(1 + \frac{\alpha}{3}(t-t_1) \right) \1_{\{0 \}}(x) + \frac{\alpha}{3}(t-t_1) \1_{\{ 1 \}}(x),
\end{array}
\end{equation}
for $t_1 \leq t < t_2 = t_1+ \frac{3}{\alpha}=\frac{4}{\alpha}$. Observe that   $t_2$ is the first time when $u(t,x) = 2$ in
$x =0$;  and at this time $u(t,x) = 1$ in
$x =\pm1$ (so $u$ belongs to $K^G_{\infty}$).
Let us now check \eqref{def.solution.3}.
Using the explicit formula for $u(t,x)$ given in \eqref{ec.u.2},
we obtain
$$
\begin{array}{l}
\displaystyle \frac12 \int_{\Z} (f(t,x) - u_t (t,x)) (v(x) - u(t,x))\, d\nu_G(x)  \\[10pt]
\displaystyle =  \left(\alpha -\frac{\alpha}{3} \right)\left(v(0) - \left(1 + \frac{\alpha}{3}(t-t_1) \right) \right)
 -\frac{\alpha}{3} \left(v(1) - \frac{\alpha}{3}(t-t_1) \right)   -\frac{\alpha}{3} \left(v(-1) - \frac{\alpha}{3}(t-t_1) \right)  \\[10pt]
 \displaystyle = \frac{\alpha}{3}\big(    v(0) - v(1) + v(0) - v(-1)    - 2\big) \leq 0
 \end{array}
$$
for $v \in K^G_{\infty}$. Then \eqref{def.solution.3} holds.

$\bullet$ Following similar arguments, for times greater than $t_2$, we get that  the solution is given by
$$
\begin{array}{r}u(t,x) = \displaystyle \left(2 + \frac{\alpha}{5}(t-t_2) \right) \1_{\{0 \}}(x) + \left(1 + \frac{\alpha}{5}(t-t_2) \right) \1_{\{\pm1 \}}(x) +  \frac{\alpha}{5}(t-t_2) \1_{\{\pm2 \}}(x),$$
\end{array}
$$
 for $t_2 \leq t < t_3= t_2 +\frac{5}{\alpha}=\frac{9}{\alpha}.$

$\bullet$ It is easy to generalize and verify the following general
formula that describes the solution for every $t\ge t_1$. For any
given integer  $n\ge 1$  we have
$$
\begin{array}{r}
u(t,x) = \displaystyle\left(n + \frac{\alpha}{2n+1}(t-t_n) \right) \1_{\{0 \}}(x) + \sum_{k=1}^n  \left(n-k + \frac{\alpha}{2n+1}(t-t_n) \right) \1_{\{\pm k \}}(x),
\end{array}
$$
for $t_n= \frac{n^2}{\alpha} \leq t <  t_{n+1}=t_n+ \frac{2n+1}{\alpha}= \frac{(n+1)^2}{\alpha}.$
}
\end{example}

\begin{remark}\rm
    We can prove that the function $u(t, \cdot)$ given by ~\eqref{ec.u.2} is a solution by means of the   mass transport interpretation (Theorem \ref{TrabspSol}).

  Let $T: \Z \rightarrow \Z$ be the map $T(-1)=T(1)= 0$, $T(x)=x$ for all $x \in \Z$, $x \not= -1, 1$. It easy to see that $T$ is a transport map that pushes $u_t(t,.)$ to $f(t,.)$. Moreover, we have
$$\int_{\Z} (f(t,x) - u_t (t,x))u(t,x))\, d\nu_G(x)=\frac{4\alpha}{3}=\int_{\Z}u_t(t,x)d_G(x,T(x))\, d\nu_G(x).
$$
 Then, from \eqref{transesta},  $u(t, \cdot)$ is a Kantorovich potential for $d_G$  between  the source $f(t,\cdot)$ and $u_t(t,.)$. Since   $u \in W^{1,2}(0,T;L^2(V,\nu_G))$,  $u(t) \in K^G_{\infty}$  for all $t \in ]0,T[$ and $u(0,\cdot) = u_0$, by Theorem \ref{TrabspSol}, we have that $u(t, \cdot)$ is a solution. The same can be done for the other time steps.

Other way for proving this is the following. Observe that, for the previous transport map $T$, we have that
$$|u(t,x)-u(t,T(x))|=1=d_G(x,T(x)) \quad \hbox{for the vertices} \ x \in {\rm supp}(u_t(t, \cdot)).$$
Then, by the Dual Criteria for Optimality (Lemma \ref{CritOpt} ), we have that $u(t, \cdot)$ is a Kantorovich potential for $d_G$  between  the source $f(t,\cdot)$ and $u_t(t,.)$, thus by Theorem~\ref{TrabspSol},  we have that $u(t, \cdot)$ is a solution.
  $\Box$
\end{remark}

 In the above example, all the weights are equal, and so they are the weighted degrees.

 \begin{remark}\rm
 Observe that there exist graphs with the same quantity of vertices and the same edges between vertices,  with different weights on the
vertices (so that they are different graphs) but with the same weighted degrees. For example, the weighted degrees of the vertices of following graphs $G_k$ are the same: $G_k=\mathbb{Z}$ with  only $n\sim (n+1)$ and   $w_{n,n+1}=1/k$   if $n\in 2\mathbb{Z}$ and  $w_{n,n+1}=(2k-1)/k$ if $n\in 2\mathbb{Z}+1$. For these graphs, the growth dynamic for sandpiles described in this section is the same.

 Note also that the edges (the connections between vertices) are evidently important in the dynamics: for the weighted cycle graph $C_4=\{x_1,x_2,x_3,x_4\}$ with weights $w_{x_1x_2}=1$, $w_{x_2x_3}=2$, $w_{x_3x_4}=1$, $w_{x_4x_1}=2$ we have that $u=1\chi_{\{x_1\}}+2\chi_{\{x_2\}}+1\chi_{\{x_3\}}$ is stable, but for the complete graph $K_4=\{x_1,x_2,x_3,x_4\}$ with all the  weights equal to $1$, the same $u$ is not stable (the slope between $x_1$ and $x_4$ is $2$), although both graphs are connected, have the same number of vertices and all the vertices have the same weighted degree.
  $\Box$
\end{remark}

  In the next example we see more clearly how the weighted degrees influence in the dynamics.

\begin{example}\label{Example2}{\rm Let us consider, the weighted  star graph $G=(V,E)$ with $V:= \{x_0,x_1,x_2,x_3 \}$, $E:= \{(x_0,x_1), (x_1,x_2),(x_1,x_3) \}$ and  weights $w_{01}:= w_{x_0x_1} \not=0$, $w_{12}:= w_{x_1x_2} \not=0$, $w_{13}:= w_{x_1x_3} \not=0$ and zero  otherwise. We denote $d_0 := d_{x_0} = w_{01}$, $d_1:= d_{x_1} = w_{01} + w_{12} + w_{13},$ $d_2:= d_{x_2} = w_{12}$ and $d_3:= d_{x_3} = w_{13}$.

We take
as source the function
$$
    f(t,x) = \alpha \1_{\{x_0 \}}(x),
    \quad 0 < \alpha,
$$
and as initial datum
$$
    u_0 (x) = 0.
$$
Let us find the solution by looking at its evolution between
some critical times.

$\bullet$ First, for small times, the solution to
$P_{\infty}^G(u_0,f)$ is given by
$$
u(t,x) =  \alpha t \, \1_{\{x_0 \}}(x), \quad \hbox{for} \ 0=t_0 \leq t < t_1 = \frac{1}{\alpha}.
$$
Remark that $t_1 = \frac{1}{\alpha}$ is the first time when
$u(t,x) = 1$, and  it is immediate that $u(t,\cdot)
\in K^G_{\infty}$ and $u_t(t,x) = f(t,x)$, then
\eqref{def.solution.3} holds.

$\bullet$ For times greater than $t_1$, similarly to the previous example, we look for a solution of the form
$$u(t,x) = \displaystyle    \left(1 + \widetilde\alpha(t) \right) \1_{\{x_0 \}}(x) +  \widetilde\alpha(t)\1_{\{x_1 \}}(x),$$
with $\widetilde\alpha(t_1)=0$.   By  the mass conservation principle:
$$(\alpha- \widetilde\alpha'(t))  d_0+(0-\widetilde\alpha'(t))d_1=0.$$
Therefore
$$(d_0+d_1)\widetilde\alpha'(t)=d_0\alpha, $$
that, for $\widetilde\alpha(t_1)=0$, has as solution
$$\widetilde\alpha(t)= \frac{\alpha d_0}{d_0+d_1}(t-t_1).$$
Now it is easy to check   that the sandpile growing solution is given by
the function
\begin{equation}\label{ec.u.2N}
u(t,x) = \left(1 + \frac{\alpha d_0}{d_0+d_1}(t - t_1) \right) \1_{\{x_0 \}}(x) + \frac{\alpha d_0}{d_0+d_1}(t - t_1)  \1_{\{x_1 \}}(x)
\end{equation}
for $ t_1 \leq t < t_2 = t_1 + \frac{1}{\frac{\alpha d_0}{d_0+d_1}}$.
Observe that $t_2  = \frac{1}{\alpha} + \frac{d_0+d_1}{\alpha d_0}$ is the time   when $u(t,x_0) = 2$ and $u(t,x_1)=1$.

$\bullet$ For times greater than $t_2$, working similarly,  the solution is given by
\begin{equation}\label{ec.u.3N}
u(t,x) = \left(2 + k(t-t_2) \right) \1_{\{x_0 \}}(x) + \left(1 + k(t-t_2) \right) \1_{\{x_1 \}}(x) $$ $$+ k(t-t_2)  \1_{\{x_2 \}}(x) + k(t-t_2)  \1_{\{x_3 \}}(x)
\end{equation}
for $t_2 \leq t < t_3:=  t_2 + \frac{1}{k}$, where
$$k = \frac{\alpha d_0}{d_0+ d_1+d_2+d_3}.$$

Now, $\displaystyle t_3 = \frac{1}{\alpha} + \frac{d_0+d_1}{\alpha d_0} + \frac{d_0+ d_1+d_2+d_3}{ \alpha d_0}.$

$\bullet$ It is easy to generalize and verify the following general
formula that describes the solution for every $t\ge t_2$. For any
given integer  $n\ge 2$ we have
\begin{equation}\label{ec.u.nN} \begin{array}{ll}
\displaystyle u(t,x) =& \left(n + \frac{\alpha d_0}{d_0+ d_1+d_2+d_3}(t-t_n) \right) \1_{\{x_0 \}}(x) \\[10pt] &+ \left( (n-1)  +  \frac{\alpha d_0}{d_0+ d_1+d_2+d_3}(t- t_n) \right) \1_{\{x_1 \}}(x) \\[10pt]& + \displaystyle \frac{\alpha d_0}{d_0+ d_1+d_2+d_3}(t-t_n)  \1_{\{x_2 \}}(x)
\\[10pt]& +\displaystyle \frac{\alpha d_0}{d_0+ d_1+d_2+d_3}(t-t_n)  \1_{\{x_3 \}}(x)\end{array}
\end{equation}
for $t_n\le t\le t_{n+1}$ where $$ t_n=  \frac{1}{\alpha} + \frac{d_0+d_1}{\alpha d_0} + (n-2) \frac{d_0+ d_1+d_2+d_3}{ \alpha d_0}$$ and $$  t_{n+1}= \frac{1}{\alpha} + \frac{d_0+d_1}{\alpha d_0} + (n-1) \frac{d_0+ d_1+d_2+d_3}{ \alpha d_0} .$$
}
\end{example}

\begin{remark}\rm
 The solution ~\eqref{ec.u.2N} can be also found from the  mass transport interpretation: initially, the  rate of mass $d_0\alpha$ contributed   by the source at point $x_0$  is distributed to a velocity $\frac{d_0\alpha}{d_0+d_1}$ of $u$ at the two vertex involved.
The same can be done for the other time steps. $\Box$
\end{remark}

Let us see now an example with a source in two points.

\begin{example}\label{Example3}{\rm Consider the weighted graph $G=(V,E)$ with $V:= \{x_1,x_2,x_3,x_4\}$, $E:= \{(x_1,x_2), (x_2,x_3),(x_3,x_4) \}$ and  weights $w_{x_1x_2} =w_{x_2x_3} =w_{x_3x_4} = 1$ and zero  otherwise. Then $d_{x_1} = d_{x_4} =1$ and  $d_{x_2} = d_{x_3} =2$.

We take
as source the function
$$
    f(t,x) = \alpha \1_{\{x_2 \}}(x) + \beta \1_{\{x_3 \}}(x) ,
    \quad 0 < \beta  < \alpha,
$$
and as initial datum
$$
    u_0 (x) = 0.
$$
Let us find the solution by looking at its evolution between
some critical times.

$\bullet$ First, for small times, the solution to
$P_{\infty}^G(u_0,f)$ is given by
$$
u(t,x) =  \alpha t \, \1_{\{x_2 \}}(x) + \beta t \, \1_{\{x_3 \}}(x), \quad \hbox{for} \ 0=t_0 \leq t \le t_1 = \frac{1}{\alpha}.
$$
Remark that $t_1 = \frac{1}{\alpha}$ is the first time when
$u(t,x_2) = 1$ and $u(t_1,x_3) = \frac{\beta}{\alpha} <1$. It is immediate that $u(t,\cdot)
\in K^G_{\infty}$ and $u_t(t,x) = f(t,x)$,  then
\eqref{def.solution.3} holds.

$\bullet$ For times greater than $t_1$,
working as in the previous examples, and attending to the fact that the solution must belong to $K^G_{\infty}$  (slope constraint condition),  we look for a solution of the form
$$
u(t,x) = \widetilde\alpha(t) \1_{\{x_1 \}}(x) + \left(1 + \widetilde\alpha(t) \right) \1_{\{x_2 \}}(x) + \left(\frac{\beta}{\alpha} + \widetilde\beta(t) \right)  \1_{\{x_3 \}}(x),
$$
with $\widetilde\alpha(t_1)=0$ and $\widetilde\beta(t_1)=0$   (at the beginning, the source in $x_2$ does not involve any action in $x_3$ because of the  slope constraint condition, and  in the same way, the source in $x_3$ does not involve any action on $x_1$).  By  the mass conservation principle,
$$ \left(0- \widetilde\alpha'(t)\right)+\left(\alpha-\widetilde\alpha'(t)\right)2=0$$
and $$\left(\beta-\widetilde\beta'(t)\right)2 =0.$$
Hence, using the initial conditions in $t_1$,
$$\widetilde\alpha(t)=\frac{2\alpha}{3}(t -t_1)$$ and
$$\widetilde\beta(t)=\beta(t - t_1).$$
And therefore,
\begin{equation}\label{ec.u.2N3}
u(t,x) = \frac{2\alpha}{3}(t -t_1) \1_{\{x_1 \}}(x) + \left(1 + \frac{2\alpha}{3}(t - t_1) \right) \1_{\{x_2 \}}(x) + \left(\frac{\beta}{\alpha} + \beta(t - t_1) \right)  \1_{\{x_3 \}}(x).
\end{equation}
Observe that
$$\displaystyle  \int_{V} (f(t,x) - u_t (t,x)) (v(x) - u(t,x))\, d\nu_G(x)$$ $$ = - \frac{2\alpha}{3} \left(v(x_1)-  \frac{2\alpha}{3}(t -t_1)\right) + 2\left( \alpha - \frac{2\alpha}{3} \right)\left(v(x_2) - \left(1 + \frac{2\alpha}{3}(t - t_1) \right) \right)  $$ $$= \frac{2\alpha}{3} \left( v(x_2) - v(x_1) - 1\right) \leq 0  $$
for $v \in K^G_{\infty}$,  so~\eqref{def.solution.3} is true. Let us see up which time we have that the slope constraint condition is true.

 We have that
 $u(t, \cdot) \in K^G_{\infty}$ is true if
 \begin{equation}\label{CompCond1or}
 \left(1 + \frac{2\alpha}{3}(t - t_1) \right) - \left(\frac{\beta}{\alpha} + \beta(t - t_1)\right) \leq 1,
 \end{equation}
which is equivalent to
\begin{equation}\label{CompCond1}
\left(t - \frac{1}{\alpha}\right) \left(\frac{2\alpha}{3}- \beta \right) \leq \frac{\beta}{\alpha}.
\end{equation}
Now, \eqref{CompCond1} holds for any  $t > t_1=\frac{1}{\alpha}$ if $\alpha \leq \frac{3\beta}{2}$, and, in the case $\alpha > \frac{3\beta}{2}$, we have that \eqref{CompCond1} holds for
$$\frac{1}{\alpha} < t <  \frac{1}{\alpha}+\frac{\frac{\beta}{\alpha}}{\frac{2\alpha}{3}-\beta}=\frac{2}{2\alpha - 3 \beta}=:t_{comp} .$$
On the other hand, note that   $u(\frac{5}{2\alpha},x_2) = 2$  and   $u(\frac{1}{\beta},x_3) = 1$.  Then we have to compare this two times (from which new vertices are involved in the dynamics), $\frac{5}{2\alpha}$ and $\frac{1}{\beta}$, taking also into account the time $t_{comp}$:

 1. In the case $\frac{1}{\beta}\le \frac{5}{2\alpha}$, that is $\alpha\le \frac{5\beta}{2}$, we have that~\eqref{ec.u.2N3} is true for
$$t_1\le t\le t_2:=\frac{1}{\beta}.$$ This is clear since we begin with the solution  satisfying the slope constraint condition at time $t_1$, and in $x_3$ we get the height $1$ before, or at the same time, in which $x_2$ is attained the height 2; anyway we   make the computations: we always have  the slope   condition~\eqref{CompCond1or} satisfied if $\alpha\le \frac{3\beta}{2}$, and for
$\frac{3\beta}{2}<\alpha\le\frac{5\beta}{2}$, we have that
$$\hbox{min}\left\{\frac{1}{\beta},t_{comp}\right\}=\hbox{min}\left\{\frac{1}{\beta},\frac{2}{2\alpha-3\beta}\right\}=\frac{1}{\beta}.$$
In this time we have that at $x_1$ is $\frac{2(\alpha - \beta)}{3\beta}\le 1$, at $x_2$ is $1+\frac{2(\alpha - \beta)}{3\beta}\le 2$, at $x_3$ is $1$ and at $x_4$ is $0$.

2. In the case $\frac{1}{\beta}> \frac{5}{2\alpha}$, that is $\alpha> \frac{5\beta}{2}\hbox{(}\ge \frac{3\beta}{2}\hbox{)}$, we have that
that~\eqref{ec.u.2N3} is true for
$$t_1\le t\le t_2:=\hbox{min}\left\{\frac{5}{2\alpha},t_{comp}\right\}=\hbox{min}\left\{\frac{5}{2\alpha},\frac{2}{2\alpha-3\beta}\right\}=\frac{2}{2\alpha-3\beta}.$$
In this time we have that at $x_1$ and $x_3$ the height of the sand pile is $$\frac{2\beta}{2\alpha-3\beta}<1,$$ and the height at $x_2$ is
$1+\frac{2\beta}{2\alpha-3\beta}$.

Consequently, from the above discussion we have that the function given in \eqref{ec.u.2N3} is the solution of $P_{\infty}^G(u_0,f)$:
$$ \hbox{for $\frac{1}{\alpha} \leq t \le t_2 = \frac{1}{\beta}$ \ if \ $\alpha\le\frac{5\beta}{2}$, }$$
and
$$ \hbox{for $\frac{1}{\alpha} \leq t \le t_2 = \frac{2}{2\alpha-3\beta}$ \ if \ $\alpha >\frac{5\beta}{2}$.}$$

\indent
1. (continuation): In the case $\alpha\le\frac{5\beta}{2}$, the solution after time $t_2=\frac{1}{\beta}$ is the following, up to a time $t_3$ to be determined (the arguments to arrive to this solution are similar to previous one):
\begin{equation}\label{ec.u.2N4}
\begin{array}{ll}
u(t,x) = \left( \frac{2(\alpha - \beta)}{3\beta}+\frac{2\alpha}{3}(t -t_2) \right) \1_{\{x_1 \}}(x) + \left(1 +\frac{2(\alpha - \beta)}{3\beta} + \frac{2\alpha}{3}(t - t_2) \right) \1_{\{x_2 \}}(x) \\[10pt]+ \left(1+ \frac{2\beta}{3} (t - t_2) \right)  \1_{\{x_3 \}}(x) + \frac{2\beta}{3} (t - t_2) \1_{\{x_4 \}}(x). \end{array}
\end{equation}

Now in this case the  slope constraint condition is
$$  \frac{2(\alpha - \beta)}{3\beta} + \frac{2\alpha}{3}(t - t_2)- \frac{2\beta}{3}(t - t_2) \leq 1.$$
Hence, it is true up to the time
$$ t_{comp}= \frac{3}{2(\alpha - \beta)}.$$
 At this time we have that the solution is a pyramid for all the four vertices:
$$u(t_{comp},x_1)=\frac{2(\alpha - \beta)}{3\beta}+\frac{2\alpha}{3}(t_{comp} -\frac{1}{\beta})=\frac{\alpha+2\beta}{3(\alpha-\beta)}=a+1,$$
$$u(t_{comp}, x_2)  =a+2,$$
$$u(t_{comp},x_3)  =a+1,$$
$$u(t_{comp},x_4)  =a=\frac{5\beta-2\alpha}{3(\alpha-\beta)}\ge 0.$$
Hence   the function given by \eqref{ec.u.2N4} is the solution of $P_{\infty}^G(u_0,f)$ for $t_2 \leq t \le t_3$ with
$$t_3:= t_{comp}= \frac{3}{2(\alpha - \beta)}.$$
 Observe that $t_3=t_2$ if $\alpha=\frac{5\beta}{2}$ (so the above calculations  are only necessary for $\alpha<\frac{5\beta}{2}$). Observe also that if $\alpha=\frac{5\beta}{2}$ then $a=0$,
 but  for $\alpha\approx\beta$ this  first pyramid has a very large height and it is achieved at a very large time.

 Now, for $t > t_3$,   the sandpile (the pyramid we have got) grows up at the same velocity in all points:
$$
\begin{array}{ll}\displaystyle u(t,x) = \left(a+1 + \frac{\alpha + \beta}{3}(t - t_3) \right) \1_{\{x_1 \}}(x) + \left(a+2 + \frac{\alpha + \beta}{3}(t - t_3) \right) \1_{\{x_2 \}}(x) \\[10pt]\displaystyle + \left(a+1 + \frac{\alpha + \beta}{3}(t - t_3) \right) \1_{\{x_3 \}}(x) + \left(a + \frac{\alpha + \beta}{3}(t - t_3) \right) \1_{\{x_4 \}}(x). \end{array}
$$

2. (continuation): In the case $\alpha > \frac{5\beta}{2}$, we have
$$u(t_1,x_1) = u(t_1,x_3) =  \frac{2\beta}{2\alpha -3\beta} <1 \quad \hbox{and} \quad u(t_1,x_2) = 1 + \frac{2\beta}{2\alpha -3\beta}.$$
 This is a pyramid (in these three vertices, without taking into account $x_4$), and similarly to the previous case we have that the following  function will be a solution up to the time in which this pyramid gets the height $1$ at $x_3$:
$$
\begin{array}{ll}
u(t,x) = & \displaystyle  \left( \frac{2\beta}{2\alpha -3\beta}+\frac{2(\alpha + \beta)}{5}(t -t_2) \right) \1_{\{x_1 \}}(x)
\\[10pt]
& \displaystyle + \left(1 +\frac{2\beta}{2\alpha -3\beta} + \frac{2(\alpha + \beta)}{5}(t - t_2) \right) \1_{\{x_2 \}}(x) \\[10pt]
& \displaystyle   + \left(\frac{2\beta}{2\alpha -3\beta}+\frac{2(\alpha + \beta)}{5}(t -t_2) \right)  \1_{\{x_3 \}}(x). \end{array}
$$
Since
 $$1 = u(t,x_3) = \frac{2\beta}{2\alpha -3\beta}+\frac{2(\alpha + \beta)}{5}\left(t -\frac{2}{2\alpha-3\beta}\right),$$
if
 $$t = \frac{7}{2 (\alpha + \beta)},$$
we have that such $u$   is the solution of $P_{\infty}^G(u_0,f)$ for
$$t_2 = \frac{2}{2\alpha-3\beta} \leq t \le t_3:= \frac{7}{2 (\alpha + \beta)}.$$
Now,  at this time $t_3$ we have a pyramid for all the four vertices:
$$u(t_3,x) = \1_{\{x_1 \}}(x) + 2 \1_{\{x_2 \}}(x)+ \1_{\{x_3 \}}(x)+0\1_{\{x_4 \}}.$$
Then,  as we have previously calculated,  the pyramid-function
$$
\begin{array}{ll}
u(t,x) = \left(1 +\frac{\alpha + \beta}{3}(t -t_3) \right) \1_{\{x_1 \}}(x) + \left(2 + \frac{\alpha + \beta}{3}(t - t_3) \right) \1_{\{x_2 \}}(x) \\[10pt]+ \left(1+\frac{\alpha + \beta}{3}(t -t_3) \right)  \1_{\{x_3 \}}(x) + \frac{\alpha + \beta}{3}(t -t_3)\1_{\{x_3 \}}(x) , \end{array}
$$
is the solution of $P_{\infty}^G(u_0,f)$ for $t \geq t_3$.

{\it In summary, a common conclusion in both cases holds for this  graph:} there exists a time $t_m$ (depending on a relation between the sources) for which there is $a \ge 0$ such that  a pyramid of height $a+2$ is achieved for all the four vertices:
$$u(x_1,t_m)=a+1,\ u(x_2,t_m)=a+2,\ u(x_3,t_m)=a+1,\ u(x_4,t_m)=a  .$$
And from this time  the solution of $P_{\infty}^G(u_0,f)$ is this pyramid growing up at velocity $\frac{\alpha + \beta}{3}$:
$$
\begin{array}{ll}
u(t,x) = \left(a +1+\frac{\alpha + \beta}{3}(t -t_m) \right) \1_{\{x_1 \}}(x) + \left(a+2 + \frac{\alpha + \beta}{3}(t - t_m) \right) \1_{\{x_2 \}}(x) \\[10pt]+ \left( a +1 + \frac{\alpha + \beta}{3}(t -t_m) \right)  \1_{\{x_3 \}}(x) + \left(a+\frac{\alpha + \beta}{3}(t -t_m)\right)\1_{\{x_4 \}}(x) , \end{array}
$$
 for all $t \geq t_m$. Concretely, this special time is

$\bullet$ $t_m= \frac{3}{2(\alpha - \beta)} (\ge \frac{1}{\beta})$ if $\alpha\le \frac{5\beta}{2}$, and

$\bullet$ $t_m= \frac{7}{2(\alpha + \beta)}(\le \frac{1}{\beta})$ if $\alpha \ge  \frac{5\beta}{2}$;

$\bullet$ $t_m=\frac{1}{\beta}$ if $\alpha=\frac{5\beta}{2}$.

}\end{example}

We now consider  some examples of  collapse of a datum   $0\le u_0\in L^2(V,\nu_G)\cap L^1(V,\nu_G)$ such that $$
1< L = \Vert \nabla u_0 \Vert_{L^\infty( \nu_G \otimes m^G_x)}.
$$
By Theorem \ref{teo.collapsing.intro} we have that there exists the limit
\begin{equation}\label{limcolapso}
\lim_{p\to \infty } u_p (t,x) = u_\infty (x) \qquad \mbox{ in }
L^2(V,\nu_G),
\end{equation}
which is a function independent of $t$ such that $u_\infty \in K^G_{\infty}$.  Moreover,
$u_\infty(x) = v(1,x)$, where $v$ is the unique strong solution of
the evolution equation
\begin{equation}\label{EcAbal}
\left\{ \begin{array}{ll}
\displaystyle \frac{v}{t} - v_{t} \in \partial
I_{K^G_{\infty}}(v), \quad & t \in ]\tau,1],
\\[10pt]
v(\tau,x)= \tau u_0(x),
\end{array}\right.
\end{equation}
with $\tau = L^{-1}$.  We will obtain $u_\infty$ by solving the above problem.    Observe that, from Theorem~\ref{teo.collapsing.intro} and
 Proposition~\ref{vpositiva}, we have that $\tau u_0\le v(t)\le u_\infty$, and,  by Proposition~\ref{remL1}, $u_\infty\in L^1(V,\nu_G)$, hence we have that $v\in L^2(]\tau,1[;L^2(V,\nu_G)\cap L^1(V,\nu_G))$. Therefore, by  Theorem~\ref{consmass}, we have the following  conservation of  mass  principle for  the above problem: \begin{equation}\label{ConserMass12}
 \int_V v_t(t,x) d \nu_G(x) = \int_V \frac{v(t,x)}{t} d \nu_G(x).
 \end{equation}

\begin{example}\label{ej23} {\rm Consider  the weighted graph $G=(V,E)$ with   $V:= \{x_1,x_2,x_3,x_4 \}$, $E:= \{(x_1,x_2), (x_2,x_3),(x_3,x_4) \}$ and  weights $w_{x_ix_{i+1}} = 1$, $i=1,2,3$ and  $w_{x_ix_j} =0$, otherwise.

Let the initial data be $$u_0(x)= 3 \1_{\{x_2 \}}(x) +    b\1_{\{x_4 \}}(x),$$  with $0\le b\le 9/5$.  For this datum we have that
$$L=\Vert \nabla u_0 \Vert_{L^\infty( \nu_G \otimes m^G_x)}=3.$$
We look for a solution of \eqref{EcAbal},  with initial datum at $\tau=\frac{1}{L}=\frac{1}{3}$ equal to $$v\left(\frac13 \right) = \frac13 u_0(x) = \1_{\{x_2 \}}(x) +  \frac{b}{3} \1_{\{x_4 \}}(x), $$ of the form
$$v(t,x):=  \alpha(t) \1_{\{x_1 \}}(x) + (1+ \alpha(t)) \1_{\{x_2 \}}(x) + \alpha(t) \1_{\{x_3 \}}(x) + \beta(t) \1_{\{x_4 \}}(x);$$
hence, with $$\alpha \left(\frac13 \right)= 0  \quad \hbox{and} \quad  \beta\left(\frac13 \right) = \frac{b}{3}.$$
 Remember that the model is a sandpile growth model in which the source is given by $\frac{v}{t}\ge 0$.
 Since the  rescaled initial datum at the three points $x_1,x_2,x_3$ forms a pyramid $0\1_{\{x_1 \}}(x)+1\1_{\{x_2 \}}(x)+0\1_{\{x_3 \}}(x)$ and in the point $x_4$ its value is below $1$, at the beginning of the process, the sources at points $x_1,x_2,x_3$ do not contribute to $x_4$ and the source at $x_4$ do not contributes at the other points,   therefore we can use the mass conservation principle \eqref{ConserMass12}  as follows  (see Remark \ref{remsoporte}). For  the subgraph $\{x_1,x_2,x_3\}$,
$$ 2\left(\frac{1+\alpha(t)}{t} -  \alpha'(t)\right) + 3\left( \frac{\alpha(t)}{t} -\alpha'(t)  \right) =0,$$
  and for the subgraph $\{x_4\}$,
 and
$$\frac{\beta(t)}{t} - \beta'(t) =0.$$ Then, since $\alpha(1/3)=0$, for $t \geq \frac13$,
 $$ \alpha(t) = \frac{6t - 2}{5};$$ and since $\beta(1/3)=b/3$, for $t\geq \frac13$,
 $$ \beta(t) = bt.$$
For these values of $\alpha(t)$ and $\beta(t)$, we have:  on the one hand that $\vert \alpha(t) - \beta(t) \vert \leq1$ for $t\in[\frac13,1]$, since $0\le b\le 9/5$,  and hence $v(t) \in K^G_{\infty}$; and, on the other hand,
$$\displaystyle  \int_{V} \left(\frac{v(t,x)}{t} - u_t (t,x)\right) (w(x) - u(t,x))\, d\nu_G(x) $$ $$=
\left( \alpha'(t) -\frac{\alpha(t)}{t} \right)(w((x_2) - w(x_1) -1)+ 2\left( \alpha'(t) -\frac{\alpha(t)}{t} \right)(w((x_3) - w(x_2) -1)$$ $$=
  \frac{2}{5t}  (w((x_2) - w(x_1) -1)+   \frac{4}{5t} (w(x_3) - w(x_2) -1) \leq 0$$
for any $w \in K^G_{\infty}$. Consequently, the function
$$v(t,x)=   \frac{6t - 2}{5}   \1_{\{x_1 \}}(x) +  \frac{6t +3}{5}   \1_{\{x_2 \}}(x) +\frac{6t - 2}{5}   \1_{\{x_3 \}}(x) + bt \1_{\{x_4 \}}(x)$$
 is a solution of \eqref{EcAbal}.

 Therefore, we have that  the initial datum $u_0(x)= 3 \1_{\{x_2 \}}(x) +    b\1_{\{x_4 \}}(x)$ collapses to
 $$u_\infty(x) = v(1,x) =  \frac{4}{5}  \1_{\{x_1 \}}(x) +  \frac{9}{5}  \1_{\{x_2 \}}(x) + \frac{4}{5}  \1_{\{x_3 \}}(x) +   b\1_{\{x_4 \}}(x).$$
  Observe that, even in the case $1<b\le \frac95$, the sandpile in $x_4$ does not collapse. This means that, in the limit configuration obtained in~\eqref{limcolapso}, there is a {\it  sandpile collapsing} at  the other vertices that {\it instantaneously prevents}  the collapsing at this point. Hence, one can say that this is a very weak model for describing  collapsing of sandpiles, but still, Problem~\eqref{EcAbal}  is an    interesting and simple    self-organizing  mathematical model for the evolution of a datum $u_0$ that violates the slope condition $\Vert \nabla u_0\Vert_{L^\infty( \nu_G \otimes m^G_x)}\le 1$ to a stationary state ($v(1)=u_\infty$) where such condition is attained, in local or nonlocal models, including this on weighted graphs.

 If in this example we take $b=2$, then we have that the function
$$v(t,x)=   \frac{6t - 2}{5}   \1_{\{x_1 \}}(x) +  \frac{6t +3}{5}   \1_{\{x_2 \}}(x) +\frac{6t - 2}{5}   \1_{\{x_3 \}}(x) +  2t \1_{\{x_4 \}}(x)$$
 is a solution of \eqref{EcAbal} up to the time $t=\frac34$; at this  time we have
 $$v\left(\frac34,x_3\right)=\frac12\ \hbox{ and } \ v\left(\frac34,x_4\right)=\frac32.$$
 From such time  up to the limit time $t=1$, the solution of \eqref{EcAbal} is given by
 $$\left(\frac43t-\frac12\right)\1_{\{x_1 \}}(x) +  \left(\frac43t+\frac12\right)  \1_{\{x_2 \}}(x) +\left(\frac43t-\frac12\right)  \1_{\{x_3 \}}(x) + \left(\frac43t+\frac12\right) \1_{\{x_4 \}}(x),$$
 where the growing up velocity is the same in all point.
 The final configuration is
 $$u_\infty(x)=v(1,x)=\frac56\1_{\{x_1 \}}(x) +  \frac{11}{6}  \1_{\{x_2 \}}(x) +\frac56 \1_{\{x_3 \}}(x) + \frac{11}{6} \1_{\{x_4 \}}(x).$$
 In this case  we have collapse in the two points that violate the slope condition.

}
\end{example}

\begin{example} {\rm     Let us consider, the weighted graph   $V:= \{x_1, x_2, x_3, x_4, x_5, x_6 \}$,  such that $x_i\sim x_{i+1}$ with weight $1$, and there is not any other relation (so, $d_{x_1}=d_{x_6}=1$ and $d_{x_i}=2$ for $i=2,3,4,5$) . Take
  the initial datum $$u_0(x)= 3 \1_{\{x_2 \}}(x) +   \frac95\1_{\{x_4 \}}(x)+2\1_{\{x_5 \}}(x),$$
  where
 working as in the previous examples,  we have that the solution of \eqref{EcAbal} is given by
    $$v(t,x)=   \frac{6t - 2}{5}   \1_{\{x_1 \}}(x) +  \frac{6t +3}{5}   \1_{\{x_2 \}}(x) +\frac{6t - 2}{5}   \1_{\{x_3 \}}(x) +
     \frac95t \1_{\{x_4 \}}(x)+2t\1_{\{x_5 \}}(x)$$
    for $\frac13\le t\le\frac12$; and
    $$v(t,x)=   \frac{6t - 2}{5}   \1_{\{x_1 \}}(x) +  \frac{6t +3}{5}   \1_{\{x_2 \}}(x) +\frac{6t - 2}{5}   \1_{\{x_3 \}}(x)+\frac95t \1_{\{x_4 \}}(x) $$
    $$\qquad+\frac{4t+1}{3}\1_{\{x_5 \}}(x)+\frac{4t-2}{3}\1_{\{x_6 \}}(x)$$
    for $\frac12\le t\le1$.
    Hence,
  $$u_\infty(x)=v(1,x)=\frac{4}{5}  \1_{\{x_1 \}}(x) +  \frac{9}{5}  \1_{\{x_2 \}}(x) + \frac{4}{5}  \1_{\{x_3 \}}(x) $$ $$+  \frac95\1_{\{x_4 \}}(x)
  +\frac53\1_{\{x_5\}}(x) + \frac23  \1_{\{x_6 \}}(x).$$
    Note that although being   $u_0(x_4)<u_0(x_5)$, finally   $u_\infty(x_4)>u_\infty(x_5)$. So,  the collapsing of a sandpile may change the location of  {\it peaks} of an initial configuration.

}
\end{example}

\section{A second model of sandpile growth}

In the above model we have  seen that the dynamic depends of the weights through the weighted degree of the vertices. Here we introduce a new model of  sandpile   in which the dynamic depends   explicitly on the weights. We can also arrive to this model, as for the previous one, by taking limits  as $p\to+\infty$  to the solutions of a $p$-Laplacian evolution equation  but with a different   $p$-Laplacian operator.  This other $p$-Laplacian operator is also used in many other problems in the context of weighted graphs, see for example \cite{Elmoatazetal0}, where different type of $p$-Laplacian  type operators on weighted graphs are described.

\subsection{A different $p$-Laplacian evolution problem}\label{p-LaplacianEpesado}

In this section we continue assuming that $[V(G),\mathcal{B},m^G,\nu_G]$ is the reversible random walk space associated with the weighted graph $G = (V(G), E(G)),$ given in Subsection \ref{example.graphs}, and  that $G$ is connected.  We simplify the writing by using $V=V(G)$.  In this section we will assume that
$$\hbox{there exits $M_w$ such that $ w_{xy}\leq M_w$  for all $x,y \in V$.}
$$

Let $p\ge 3$. We define the following {\it weighted } $p$-Laplacian operator  in $G$:
 $$\Delta_p^w u (x) :=   \frac{1}{d_x} \sum_{y\sim x}\left(\sqrt{w_{xy}}\right)^{p-2} \vert \nabla  u(x,y) \vert^{p-2} \nabla  u(x,y)  w_{xy} .$$
 The  {\it integration by  parts formula} for this model reads as follows, for adequate integrable functions:
$$
  \sum_{x\in V} \Delta_p^w u (x) \, v(x) d\nu_G(x) $$
  $$= - \frac12 \sum_{x\in V}\sum_{y\sim x} \left(\sqrt{w_{xy}}\right)^{p-2} \vert \nabla  u(x,y) \vert^{p-2} \nabla  u(x,y)   \nabla v(x,y) w_{xy}
$$
$$= - \frac12 \sum_{x\in V}\sum_{y\sim x} \left(\sqrt{w_{xy}}\right)^{p} \vert \nabla  u(x,y) \vert^{p-2} \nabla  u(x,y)   \nabla v(x,y).$$
 Observe that this operator coincides with the anisotropic graph $p$-Laplacian described in~\cite[Section 3.3]{Elmoatazetal0} up to the scalar $\frac{1}{d_x}$,
$$\Delta_p^a u (x) :=     \sum_{y\sim x}\left(\sqrt{w_{xy}}\right)^{p} \vert \nabla  u(x,y) \vert^{p-2} \nabla  u(x,y)  ,$$
but in fact, using the different Hilbert structures considered in~\cite{Elmoatazetal0} and here  they act in a similar way:
$$
  \sum_{x\in V} \Delta_p^w u (x) \, v(x) d\nu_G(x)=\sum_{x\in V} \Delta_p^a u (x) \, v(x).
$$

 Consider the {\it evolution problem} in  $G =(V,E,w)$:
\begin{equation}\label{NOnLOp.2pesado}
P_p^w(u_0,f)\quad\left\{\begin{array}{l} \displaystyle u_t (t,x) = \Delta^w_p u(t,x) + f(t,x), \\[10pt]
u(0,x)=u_0 (x).
\end{array}\right.
\end{equation}
Like in the previous section,  problem $P_p^w(u_0,f)$   is the gradient flow in $L^2(V,\nu_G)$ associated
to the functional
$$\begin{array}{ll}
J^w_p(u)\!\! &= \left\{\begin{array}{ll}\displaystyle\frac{1}{2p} \sum_{(x,y) \in V \times V} \left(\sqrt{w_{xy}}\right)^{p-2} \vert \nabla  u(x,y) \vert^p \, w_{xy}    &\hbox{ if} \ u \in  L^2(V, \nu_G) \cap L^p(V, \nu_G), \\[10pt] + \infty  &\hbox{ if} \ u \in  L^2(V, \nu_G) \setminus L^p(V, \nu_G), \end{array} \right.
\\
\\
\ & = \left\{\begin{array}{ll}\displaystyle\frac{1}{2p} \sum_{(x,y) \in V \times V} \left(\sqrt{w_{xy}}\right)^{p} \vert \nabla  u(x,y) \vert^p   \quad &\hbox{if} \ u \in  L^2(V, \nu_G) \cap L^p(V, \nu_G), \\[10pt] + \infty  &\hbox{if} \ u \in  L^2(V, \nu_G) \setminus L^p(V, \nu_G). \end{array} \right.
\end{array}
$$
Let us  introduce the operator $\mathcal{B}^w_p$ in $L^2(V, \nu_G)\times  L^2(V, \nu_G)$ defined as
$$(u,v) \in \mathcal{B}^w_p \iff u \in    L^2(V, \nu_G) \cap L^p(V, \nu_G) \ \hbox{and} \ v = -\Delta_p^G u.$$
With a similar proof to the one for Theorem \ref{CharactTh} we have:
\begin{theorem}\label{CharactThN} The operator $\mathcal{B}^w_p = \partial J^w_p$ is $m$-completely accretive in $L^2(V, \nu_G)$ and has dense domain.
\end{theorem}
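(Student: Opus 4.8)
The plan is to follow the three-step structure of the proof of Theorem \ref{CharactTh}, carrying the extra weight factor $\left(\sqrt{w_{xy}}\right)^p$ through each computation. First I would establish complete accretivity via Proposition \ref{prop:completely-accretive}. Fix $q\in P_0$ and $(u,v),(\hat u,\hat v)\in\mathcal{B}^w_p$, so that $v=-\Delta_p^w u$ and $\hat v=-\Delta_p^w\hat u$. Applying the weighted integration by parts formula displayed above to the test function $q(u-\hat u)$ gives
$$\int_V q(u-\hat u)(v-\hat v)\,d\nu_G=\frac12\sum_{x\in V}\sum_{y\sim x}\left(\sqrt{w_{xy}}\right)^p\Big(|\nabla u|^{p-2}\nabla u-|\nabla\hat u|^{p-2}\nabla\hat u\Big)(x,y)\,\nabla q(u-\hat u)(x,y).$$
Since $q$ is nondecreasing, on each edge $x\sim y$ the increment $\nabla q(u-\hat u)(x,y)$ has the same sign as $\nabla(u-\hat u)(x,y)=\nabla u(x,y)-\nabla\hat u(x,y)$; combined with the monotonicity of $s\mapsto|s|^{p-2}s$ and the nonnegativity of $\left(\sqrt{w_{xy}}\right)^p$, every summand is nonnegative, so the integral is $\ge 0$ and complete accretivity follows.

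Next I would prove $\mathcal{B}^w_p=\partial J^w_p$. For the inclusion $\mathcal{B}^w_p\subset\partial J^w_p$, take $(u,v)\in\mathcal{B}^w_p$ and $w\in L^2(V,\nu_G)\cap L^p(V,\nu_G)$, and expand $J^w_p(u)+\int_V v(w-u)\,d\nu_G$ by integration by parts exactly as in Theorem \ref{CharactTh}. The only new point is Young's inequality with the weight present: writing $\left(\sqrt{w_{xy}}\right)^p=\left(\sqrt{w_{xy}}\right)^{p-1}\left(\sqrt{w_{xy}}\right)$ and using $(p-1)p'=p$, one gets
$$\left(\sqrt{w_{xy}}\right)^p|\nabla u|^{p-2}\nabla u\,\nabla w\le\frac1{p}\left(\sqrt{w_{xy}}\right)^p|\nabla w|^p+\frac1{p'}\left(\sqrt{w_{xy}}\right)^p|\nabla u|^p,$$
so the cross term is bounded by $J^w_p(w)+(p-1)J^w_p(u)$, which yields $J^w_p(w)-J^w_p(u)\ge\int_V v(w-u)\,d\nu_G$ and hence $(u,v)\in\partial J^w_p$. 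For the reverse inclusion, given $(u,v)\in\partial J^w_p$ I would test with $tw$, divide by $t$ and let $t\to0^+$ to obtain the Gateaux identity $\frac12\sum_{x}\sum_{y\sim x}\left(\sqrt{w_{xy}}\right)^p|\nabla u|^{p-2}\nabla u\,\nabla w=\int_V vw\,d\nu_G$ for all admissible $w$; applying it also to $-w$ gives equality, and a final integration by parts forces $v=-\Delta_p^w u$, i.e. $(u,v)\in\mathcal{B}^w_p$.

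Finally, density of the domain follows verbatim from \cite[Proposition 2.11]{Brezis}, since $D(\partial J^w_p)\subset D(J^w_p)=L^2(V,\nu_G)\cap L^p(V,\nu_G)$, whose $L^2$-closure equals $\overline{D(\partial J^w_p)}^{L^2(V,\nu_G)}$. The only genuine care needed throughout — and the main (mild) obstacle — is verifying that the weight factors are distributed correctly so that both the edgewise monotonicity argument and Young's inequality still apply; the standing assumption $w_{xy}\le M_w$ is what guarantees that $\Delta_p^w u$ is well defined for $u\in L^2(V,\nu_G)\cap L^p(V,\nu_G)$ and that the integration by parts formula and all the sums above converge, making the computation legitimate rather than merely formal.
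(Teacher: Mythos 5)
Your proposal is correct and follows exactly the route the paper intends: the paper's ``proof'' of Theorem \ref{CharactThN} is simply the remark that the argument of Theorem \ref{CharactTh} carries over, and your write-up is precisely that argument with the factor $\left(\sqrt{w_{xy}}\right)^{p}$ tracked through the complete accretivity computation, the Young's inequality step (correctly using $(p-1)p'=p$), and the density argument via \cite[Proposition 2.11]{Brezis}. Your closing observation that the standing bound $w_{xy}\le M_w$ is what keeps the integration by parts and the sums legitimate is a sensible addition, not a deviation.
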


Since $P_p^w(u_0,f)$ coincides with the abstract Cauchy problem
$$
\left\{ \begin{array}{ll}
u'(t) + \mathcal{B}^w_p(u(t)) \ni f(t) \quad \ t \geq 0, \\[10pt]
u(0) = u_0,
 \end{array} \right.
$$
by the Brezis-Komura theorem (\cite{Brezis}), having in mind Theorem \ref{CharactThN}, we also have the following existence and uniqueness result.

\begin{theorem}\label{ExitUniqN} For any $u_0 \in L^2(V,\nu_G)$ and $f \in L^2(0, T; L^2(V,\nu_G))$ there exists a unique strong $u(t)$ solution of problem $P_p^w(u_0,f)$, that is  $u \in C([0,T]:L^2(V,\nu_G)))\cap W_{\rm loc}^{1,2}(0,T;L^2(V,\nu_G))$, and, for almost all $t \in ]0,T[ $,  $u(t) \in L^2(V, \nu_G)$  and it satisfies $P_p^w(u_0,f)$.
\end{theorem}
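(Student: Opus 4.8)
The plan is to recognize $P_p^w(u_0,f)$ as an instance of the abstract Cauchy problem \eqref{ACP} driven by a subdifferential, and then to invoke the Brezis--Komura theory recalled in Section~\ref{sect.prelim}. First I would rewrite the evolution equation $u_t = \Delta_p^w u + f$ in the equivalent form $u'(t) + \mathcal{B}^w_p(u(t)) \ni f(t)$, $u(0) = u_0$, using that $\mathcal{B}^w_p u = -\Delta_p^w u$. By Theorem~\ref{CharactThN} we have $\mathcal{B}^w_p = \partial J^w_p$, so this is exactly \eqref{ACP} with $\Psi = J^w_p$.

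Next I would check that $J^w_p$ meets the hypotheses of the Brezis--Komura theorem. It is proper since $J^w_p(0) = 0 < +\infty$; it is convex, being a nonnegative combination of the convex maps $t \mapsto |t|^p$ precomposed with the linear maps $u \mapsto \nabla u(x,y)$; it is lower semicontinuous on $L^2(V,\nu_G)$ by Fatou's lemma applied to the series defining it; and $\min J^w_p = 0$ because $J^w_p \geq 0$ and the value $0$ is attained at $u \equiv 0$. Moreover, Theorem~\ref{CharactThN} guarantees that $\partial J^w_p$ has dense domain, so $\overline{D(\partial J^w_p)}^{L^2(V,\nu_G)} = L^2(V,\nu_G)$ and hence every $u_0 \in L^2(V,\nu_G)$ lies in $\overline{D(\partial J^w_p)}$.

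With these ingredients in place, the Brezis--Komura theorem yields, for each $f \in L^2(0,T;L^2(V,\nu_G))$, a unique strong solution $u$ of \eqref{ACP}; the regularity $u \in C([0,T]:L^2(V,\nu_G)) \cap W_{\rm loc}^{1,2}(0,T;L^2(V,\nu_G))$ is part of the very notion of strong solution, while $u(t) \in D(\partial J^w_p) \subset L^2(V,\nu_G) \cap L^p(V,\nu_G)$ for almost every $t$, so that $\Delta_p^w u(t)$ is well defined and the inclusion reduces to the pointwise identity $u_t = \Delta_p^w u + f$, i.e. $P_p^w(u_0,f)$. This is a direct transcription of the proof of Theorem~\ref{ExitUniq} for the first model, so I do not expect a genuine obstacle; the only points requiring care, namely the lower semicontinuity of $J^w_p$ and the identity $\partial J^w_p = -\Delta_p^w$, are already supplied by Theorem~\ref{CharactThN}, whose proof mirrors that of Theorem~\ref{CharactTh}.
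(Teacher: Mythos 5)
Your proposal is correct and follows essentially the same route as the paper: identify $P_p^w(u_0,f)$ with the abstract Cauchy problem for $\partial J^w_p$, invoke Theorem~\ref{CharactThN} for the identification $\mathcal{B}^w_p=\partial J^w_p$ together with $m$-complete accretivity and density of the domain, and apply the Brezis--Komura theorem. The extra details you supply (properness, convexity, lower semicontinuity of $J^w_p$, and the reduction of the differential inclusion to the pointwise equation) are exactly what the paper leaves implicit.
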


\subsection{Limit as $p \to \infty$}\label{sect.covergtoppesado}

Taking limit as $p \to \infty$ to the functional $J^w_p$  we will now get the functional
\begin{equation}\label{e2MazoNNpesado}
J^w_{\infty}(u) =  \left\{ \begin{array}{ll} 0    & \displaystyle
\hbox{if}
\
\  u \in L^2(V, \nu_G), \ \vert \nabla  u(x,y) \vert\leq \frac{1}{\sqrt{w_{xy}}}  \quad \hbox{if} \ x \sim y,
\\[10pt]
+ \infty  & \hbox{in other case},
\end{array} \right.
\end{equation}
which is the indicator function of
$$K^w_{\infty}:= \left\{ u \in L^2(V, \nu_G), \ \vert \nabla u(x,y) \vert\leq \frac{1}{\sqrt{w_{xy}}}  \quad \hbox{if} \ x \sim y\right\}.$$
  Then, the
 limit
 problem will now be
\begin{equation}\label{NOnLOppesado}
P_{\infty}^w(u_0,f)\quad\left\{\begin{array}{l} \displaystyle f(t,
\cdot)- u_t
(t,.) \in \partial I_{K^w_{\infty}} (u(t,.)), \ \ \ \hbox{ a.e.} \ t \in ]0,T[, \\[10pt]
u(0,x)=u_0 (x).
\end{array}\right.
\end{equation}
Since $I_{K^G_{\infty}}$ is convex and lower semicontinuous in $L^2(V, \nu_G)$,  by the Brezis-Komura theorem (\cite{Brezis}),  for every initial data $u_0 \in K^G_{\infty}$,
problem $P_{\infty}^w(u_0,f)$ has a unique strong solution.

 The limit problem $P_{\infty}^w(u_0,f)$ is   the  model~\eqref{NOnLOppesadoiii} for  sandpile growing in weighted graphs described  in the Introduction.
 Note that this model  takes into account the weights on edges, not only the weighted degrees on vertices, in the dynamics.

With a similar proof of Theorem \ref{MoscoConv1}, we have the following result  (we included a brief proof with the necessary changes).

 \begin{theorem} \label{MoscoConv2}   The functionals $J^w_p$
converge to $J^w_\infty$
as $p\to \infty$, in the sense of Mosco in $L^2(V,\nu_G)$.
 \end{theorem}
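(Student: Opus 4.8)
The plan is to run, essentially verbatim, the scheme of the proof of Theorem~\ref{MoscoConv1}, replacing throughout the role played there by $\nabla u(x,y)$ with the \emph{weighted gradient} $G_u(x,y):=\sqrt{w_{xy}}\,\nabla u(x,y)$. The observation that makes this work is that
$$2p\,J^w_p(u)=\sum_{(x,y)\in V\times V}\left(\sqrt{w_{xy}}\right)^p|\nabla u(x,y)|^p=\sum_{(x,y)\in V\times V}|G_u(x,y)|^p,$$
while $K^w_\infty=\{u\in L^2(V,\nu_G):|G_u(x,y)|\le 1\ \text{if}\ x\sim y\}$. Thus the weighted model is formally the first model with $\nabla u$ replaced by $G_u$, and I would verify the two equivalent epigraph inclusions \eqref{e2Mos}--\eqref{e3Mos} exactly as before.

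For the inclusion ${\rm Epi}(J^w_\infty)\subset s\hbox{-}\liminf_{p\to\infty}{\rm Epi}(J^w_p)$, I would take $(u,\lambda)\in{\rm Epi}(J^w_\infty)$ with $u\in K^w_\infty$ and $\lambda\ge 0$, and use the \emph{constant} recovery sequence $u_p:=u$, $\lambda_p:=J^w_p(u)+\lambda$. The only point to check is $J^w_p(u)\to 0$: since $|G_u(x,y)|\le 1$ on $K^w_\infty$ and $p\ge 2$, we have $|G_u(x,y)|^p\le|G_u(x,y)|^2=w_{xy}|\nabla u(x,y)|^2$, whence, using $(a+b)^2\le 2a^2+2b^2$ and the reversibility of $\nu_G$,
$$J^w_p(u)\le\frac{1}{2p}\sum_{(x,y)\in V\times V}w_{xy}|u(y)-u(x)|^2\le\frac{2}{p}\sum_{x\in V}|u(x)|^2d_x\longrightarrow 0.$$
This is the exact analogue of~\eqref{1857}, and it also shows the gradient part of $J^w_p(u)$ is finite.

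For the inclusion $w\hbox{-}\limsup_{p\to\infty}{\rm Epi}(J^w_p)\subset{\rm Epi}(J^w_\infty)$, I would take $(u_{p_j},\lambda_{p_j})\in{\rm Epi}(J^w_{p_j})$ with $u_{p_j}\rightharpoonup u$ and $\lambda_{p_j}\to\lambda$; then $\lambda\ge 0$ and $\sum_{(x,y)}|G_{u_{p_j}}(x,y)|^{p_j}=2p_jJ^w_{p_j}(u_{p_j})\le Cp_j$. To conclude $u\in K^w_\infty$ I would repeat the exponent bookkeeping of Theorem~\ref{MoscoConv1}: with $q_j=\tfrac{p_j}{2}+1$, Cauchy--Schwarz gives
$$\left(\sum_{(x,y)\in V\times V}|G_{u_{p_j}}(x,y)|^{q_j}\right)^{1/q_j}\le(Cp_j)^{1/(p_j+2)}\left(2\sum_{x\in V}|u_{p_j}(x)|^2d_x\right)^{1/(p_j+2)},$$
which stays bounded because $u_{p_j}\rightharpoonup u$, and letting $j\to\infty$ (so $q_j\to\infty$) forces $|G_u(x,y)|=\sqrt{w_{xy}}\,|u(y)-u(x)|\le 1$ for $x\sim y$, i.e.\ $u\in K^w_\infty$. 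Equivalently, and more directly, for each fixed edge $|G_{u_{p_j}}(x,y)|\le(Cp_j)^{1/p_j}\to 1$, and one passes to the limit using that weak convergence in $L^2(V,\nu_G)$ forces pointwise convergence on the discrete space, since every vertex carries finite positive mass $d_x$.

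The main obstacle is precisely this last limit passage: deducing the pointwise slope bound for the weak limit $u$ from an $\ell^{p_j}$-type bound on $G_{u_{p_j}}$. The facts that make it go through are that weak $L^2(V,\nu_G)$-convergence on a locally finite graph yields pointwise (hence edgewise) convergence of $u_{p_j}$, and that the prefactors $(Cp_j)^{1/(p_j+2)}$ and $(2\|u_{p_j}\|^2)^{1/(p_j+2)}$ both tend to $1$. I note that the Mosco argument itself needs no appeal to the standing bound $w_{xy}\le M_w$: the substitution $\nabla u\mapsto\sqrt{w_{xy}}\,\nabla u$ absorbs all the weight dependence, so the estimates are structurally identical to those of the first model.
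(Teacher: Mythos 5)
Your proposal is correct and follows essentially the same route as the paper: the recovery sequence $u_p:=u$ with the bound $J^w_p(u)\le\frac{1}{2p}\sum_{(x,y)}w_{xy}|u(y)-u(x)|^2\to 0$, and the Cauchy--Schwarz bookkeeping with $q_j=\frac{p_j}{2}+1$ applied to $\left(\sqrt{w_{xy}}\right)^{q_j}|\nabla u_{p_j}(x,y)|^{q_j}$, are exactly the paper's two "main changes" to the proof of Theorem~\ref{MoscoConv1}; your weighted gradient $G_u(x,y)=\sqrt{w_{xy}}\,\nabla u(x,y)$ is just a clean repackaging of those computations. Your side remarks (the edgewise bound $|G_{u_{p_j}}(x,y)|\le (Cp_j)^{1/p_j}\to 1$ plus pointwise convergence from weak $L^2(V,\nu_G)$-convergence on atoms, and the observation that the bound $w_{xy}\le M_w$ is not needed here) are accurate but do not change the argument.
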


 \begin{proof}
To prove
$
 {\rm Epi}(J^w_{\infty}) \subset s\hbox{-}\liminf_{p
\to \infty}{\rm Epi}(J^w_{p}),
$
one main change is to get the equivalent to~\eqref{1857}:  Take $u \in K_\infty^w$, and $u_p:= u$, then, we have
$$J^w_p(u_p) =   \frac{1}{2p}  \sum_{(x,y) \in V \times V} \left(\sqrt{w_{xy}}\right)^{p-2}|u(y) - u(x)|^{p}   w_{xy}
 $$
 $$
= \frac{1}{2p}  \sum_{(x,y) \in V \times V} \left(\sqrt{w_{xy}}\right)^{p-2}|u(y)
 - u(x)|^{p-2} |u(y)
 - u(x)|^2 w_{xy} $$
 $$\leq  \frac{1}{2p}  \sum_{(x,y) \in V \times V}  |u(y)
 - u(x)|^2 w_{xy}   \le \frac{2}{p}  \sum_{x \in V  }  |u(x)|^2 d_x \to 0, $$
 as $ p\to \infty$.

To prove that
 $
 w\hbox{-}\limsup_{p \to \infty} {\rm
Epi}(J^w_{p}) \subset {\rm Epi}(J^w_{\infty}),
$
the main change is to get the equivalent to~\eqref{1903}, and this follows from this estimate (for the corresponding sequence $u_{p_j} \rightharpoonup u$):
$$
\begin{array}{l}
\displaystyle \left( \sum_{(x,y) \in V \times V}
\left(\sqrt{w_{xy}}\right)^{q_j}\left|
u_{p_j} (y)
 - u_{p_j} (x) \right|^{q_j}   \right)^{\frac{1}{q_j}}
\\[10pt]
\displaystyle
=\left( \sum_{(x,y) \in V \times V}
\left(\sqrt{w_{xy}}\right)^{p_j/2}\left|
u_{p_j} (y)
 - u_{p_j} (x) \right|^{p_j/2}
\left|
u_{p_j} (y)
 - u_{p_j} (x) \right| \sqrt{w_{xy}}  \right)^{\frac{1}{q_j}}
\\[10pt]
\displaystyle  \leq
\left( \sum_{(x,y) \in V \times V}
\left(\sqrt{w_{xy}}\right)^{p_j}\left|
u_{p_j} (y)
 - u_{p_j} (x) \right|^{p_j}
  \right)^{\frac{1}{2q_j}}
 \left( \sum_{(x,y) \in V \times V}
 \left|
u_{p_j} (y) - u_{p_j} (x) \right|^{2}w_{xy}
  \right)^{\frac{1}{2q_j}}.
 \end{array}
$$
 \end{proof}

As a  consequence of
  Theorem \ref{convergencia1.Mosco} and Theorem \ref{MoscoConv2} we have:

\begin{theorem} \label{convergencia.p.introW}
Let   $T>0$, $f \in L^2(0,T; L^2(V,\nu_G))$, $u_0\in L^2 (V,\nu_G)$ such that
$\vert \nabla  u(x,y) \vert\leq \frac{1}{\sqrt{w_{xy}}}$ if $x \sim y$ and $u_{p}$ the
unique solution of $P_p^{w}(u_0,f)$. Then, if
 $u_{\infty}$ is the unique strong solution to
$P^w_{\infty}(u_0,f)$,
$$
\displaystyle \lim_{p\to \infty}\sup_{t\in [0,T]}\Vert
u_{p}(t,\cdot)-u_{\infty}(t,\cdot)\Vert_{L^2(V,\nu_G)}=0.
$$
\end{theorem}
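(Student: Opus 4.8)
The plan is to follow verbatim the strategy used for Theorem~\ref{convergencia.p.intro}, reducing the statement to an application of the abstract convergence result Theorem~\ref{convergencia1.Mosco} fed by the Mosco convergence just established in Theorem~\ref{MoscoConv2}. First I would recast both evolution problems as abstract Cauchy problems in the Hilbert space $H=L^2(V,\nu_G)$. By Theorem~\ref{CharactThN} we have $\mathcal{B}^w_p=\partial J^w_p$, so the unique strong solution $u_p$ of $P_p^w(u_0,f)$ furnished by Theorem~\ref{ExitUniqN} is exactly the strong solution of
$$u_p'(t)+\partial J^w_p(u_p(t))\ni f(t),\qquad u_p(0)=u_0.$$
Likewise, since $J^w_\infty=I_{K^w_\infty}$, the solution $u_\infty$ of $P^w_\infty(u_0,f)$ solves $u_\infty'(t)+\partial J^w_\infty(u_\infty(t))\ni f(t)$ with $u_\infty(0)=u_0$. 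Both are of the form required in Theorem~\ref{convergencia1.Mosco}(iii), with $\Psi_n=J^w_{p_n}$ and $\Psi=J^w_\infty$.

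Next I would verify the remaining hypotheses of Theorem~\ref{convergencia1.Mosco}(iii). The initial datum and the source are kept fixed, $u_{0,p}:=u_0$ and $f_p:=f$, so the convergences $u_{0,p}\to u_0$ in $H$ and $f_p\to f$ in $L^2(0,T;H)$ are trivial. The admissibility conditions hold because $\partial J^w_p$ has dense domain (Theorem~\ref{CharactThN}), whence $u_0\in L^2(V,\nu_G)=\overline{D(\partial J^w_p)}$; on the limit side $D(J^w_\infty)=K^w_\infty$ is closed and convex (point evaluations are continuous on $H$ since $\nu_G(\{x\})=d_x>0$), so $\overline{D(\partial J^w_\infty)}=K^w_\infty$, and the hypothesis $u_0\in K^w_\infty$ places $u_0$ in this closure. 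Since Theorem~\ref{MoscoConv2} provides condition~(i) of Theorem~\ref{convergencia1.Mosco}, the implication (i)$\Rightarrow$(iii) yields, for every sequence $p_n\to\infty$, the convergence $u_{p_n}\to u_\infty$ in $C([0,T]:H)$.

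Finally I would pass from sequences to the continuous limit $p\to\infty$. Setting $a_p:=\sup_{t\in[0,T]}\Vert u_p(t,\cdot)-u_\infty(t,\cdot)\Vert_{L^2(V,\nu_G)}$, convergence in $C([0,T]:H)$ means $a_{p_n}\to 0$ for every sequence $p_n\to\infty$; as the limit function $u_\infty$ is independent of the chosen sequence, the full net satisfies $\lim_{p\to\infty}a_p=0$, which is the assertion. The only step requiring genuine care is precisely this reduction from the sequential formulation of Theorem~\ref{convergencia1.Mosco} to the parameter limit $p\to\infty$; beyond that, every ingredient is already in place and the argument is identical to the one for the first model, the sole difference being that the weighted Mosco convergence of Theorem~\ref{MoscoConv2} replaces Theorem~\ref{MoscoConv1}. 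Consequently there is no substantive obstacle apart from invoking the cited results correctly.
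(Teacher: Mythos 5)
Your proposal is correct and coincides with the paper's argument: the paper gives no separate proof of Theorem~\ref{convergencia.p.introW}, presenting it exactly as the consequence of Theorem~\ref{convergencia1.Mosco}(iii) fed by the Mosco convergence of Theorem~\ref{MoscoConv2}, in parallel with Theorem~\ref{convergencia.p.intro} for the first model. Your verification of the hypotheses (dense domain from Theorem~\ref{CharactThN}, $u_0\in K^w_\infty=\overline{D(\partial I_{K^w_\infty})}$, and the passage from sequences to the full limit $p\to\infty$) simply makes explicit what the paper leaves implicit.
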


\subsection{Collapse of the initial condition}\label{Sect.collapsingW}

Working as in the proof of Theorem \ref{teo.collapsing.intro}, we have the following result.

\begin{theorem} \label{teo.collapsing.introW}
Let $u_p$ be the solution to $P_{p}^w(u_0,0)$ with initial condition
$u_0 \in L^2 (V, \nu_G)$ such that
$$
1< L = \Vert \nabla_w u_0 \Vert_{L^\infty( \nu_G \otimes m^G_x)}.
$$
Then, there exists the limit
$$
\lim_{p\to \infty } u_p (t,x) = u_\infty (x) \qquad \mbox{ in }
L^2(V,\nu_G),
$$
which is a function independent of $t$ such that $u_\infty \in K^w_{\infty}$.  Moreover,
$u_\infty(x) = v(1,x)$, where $v$ is the unique strong solution of
the evolution equation
$$
\left\{ \begin{array}{ll}
\displaystyle \frac{v}{t} - v_{t} \in \partial
I_{K^w_{\infty}}(v), \quad & t \in ]\tau,\infty[,
\\[10pt]
v(\tau,x)= \tau u_0(x),
\end{array}\right.
$$
with $\tau = L^{-1}$.
\end{theorem}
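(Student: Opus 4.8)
The plan is to verify the hypotheses of the abstract collapse result Theorem~\ref{teodebenilan} with $A_n = \partial J^w_{p_n}$, following step by step the five-point scheme used for Theorem~\ref{teo.collapsing.intro}. Since the solution $u_p$ to $P_p^w(u_0,0)$ is the strong solution of $u_t + \partial J^w_p u \ni 0$, this is an instance of~\eqref{amn01}. For the first ingredient, Theorem~\ref{CharactThN} gives that each $\partial J^w_p$ is $m$-accretive in $L^2(V,\nu_G)$; moreover $J^w_p$ is positively homogeneous of degree $p$, because replacing $u$ by $\lambda u$ multiplies every term $(\sqrt{w_{xy}})^p|\nabla u(x,y)|^p$ by $\lambda^p$, so $\partial J^w_p$ is positively homogeneous of degree $p-1 \to \infty$.

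For the second ingredient I would introduce
$$C := \left\{ u \in L^2(V,\nu_G) : \exists\, (u_p,v_p) \in \partial J^w_p \ \text{with}\ u_p \to u,\ v_p \to 0 \ \text{as}\ p \to \infty \right\}$$
and prove $C = K^w_\infty$. The inclusion $K^w_\infty \subset C$ is obtained exactly as in the first model: for $u \in K^w_\infty$ one has $u = (I + \partial I_{K^w_\infty})^{-1}u$, and the Mosco convergence of Theorem~\ref{MoscoConv2} together with Theorem~\ref{convergencia1.Mosco} shows that $u_p := (I+\partial J^w_p)^{-1}u \to u$ with $v_p := u - u_p \in \partial J^w_p(u_p)$ and $v_p \to 0$. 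For the converse, if $u \in C$ then, since the resolvents $(I+\partial J^w_p)^{-1}$ are contractions and converge to $(I+\partial I_{K^w_\infty})^{-1}$ by the same two theorems, one gets $u = (I+\partial I_{K^w_\infty})^{-1}u$, i.e. $u \in K^w_\infty$.

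The remaining ingredients are then routine. For the density $\overline{\bigcup_{\lambda>0}\lambda C}^{L^2(V,\nu_G)} = L^2(V,\nu_G)$ I would use the truncations $T_\lambda u$: because $w_{xy}\le M_w$ in this section, a function with values in $[-\lambda,\lambda]$ satisfies $|T_\lambda u(y)-T_\lambda u(x)| \le 2\lambda \le 2\sqrt{M_w}\,\lambda/\sqrt{w_{xy}}$ on each edge, so $T_\lambda u \in 2\sqrt{M_w}\,\lambda\,C$, and $T_\lambda u \to u$ in $L^2(V,\nu_G)$ gives the claim. By Theorem~\ref{MoscoConv2} and Theorem~\ref{convergencia1.Mosco}, $\lim_p (I+\partial J^w_p)^{-1}f = (I+\partial I_{K^w_\infty})^{-1}f$ for every $f$, which identifies $P = (I+\partial I_{K^w_\infty})^{-1}$ and hence $A = P^{-1}-I = \partial I_{K^w_\infty}$, an $m$-accretive operator with $D(A) = C = K^w_\infty$. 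Finally, the hypothesis $1 < L = \Vert\nabla_w u_0\Vert_{L^\infty(\nu_G\otimes m^G_x)} < \infty$ means precisely that $u_0/L \in K^w_\infty = C$, i.e. $u_0 \in LC$; Theorem~\ref{teodebenilan}, together with the regularity of strong solutions (\cite[Theorem 3.6]{Brezis}), then yields that $u_p(t,\cdot) \to u_\infty$ is independent of $t$, belongs to $K^w_\infty$, and equals $v(1,\cdot)$ for $v$ the unique strong solution of~\eqref{amn02} with $\tau = L^{-1}$.

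The step I expect to require the most attention is not an obstacle so much as the one genuinely model-dependent point: the density argument, where the standing assumption $w_{xy}\le M_w$ is essential (in the first model the constraint was uniform, so no bound on the weights was needed). A secondary check is the complete accretivity of $\partial I_{K^w_\infty}$, should it be needed for the well-posedness and regularity of~\eqref{amn02}; this amounts to the analogue of Proposition~\ref{CAopI}, namely the stability of $K^w_\infty$ under $u,v \mapsto u\vee(v-k)$ and $u\wedge(v+k)$ for $k\ge 0$. That proof carries over unchanged, since the edge-dependent bound $|u(y)-u(x)|\le 1/\sqrt{w_{xy}}$ enters each of the four case distinctions exactly as the constant bound $1$ did there; the only conceptual difference from the first model, that the constraint now lives on edges rather than on weighted degrees, is already absorbed into the Mosco convergence of Theorem~\ref{MoscoConv2}.
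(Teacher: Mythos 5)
Your proposal is correct and follows essentially the same route as the paper: the paper's proof simply says to repeat the five-step verification of Theorem~\ref{teodebenilan} from the first model, noting that the only genuine change is the density of $\bigcup_{\lambda>0}\lambda C$, which now uses the standing bound $w_{xy}\le M_w$ to place $T_\lambda u$ in a dilate of $C$ --- exactly the point you single out. Your constant $2\sqrt{M_w}\,\lambda$ is in fact more carefully justified than the paper's $M_w\lambda$, and the extra remark on the complete accretivity of $\partial I_{K^w_\infty}$ is a sensible (if implicit in the paper) supplementary check.
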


\begin{proof}
 In fact, the main change is related to the equivalent fact for~\eqref{closure1}, and this is true since now, for the equivalent set $C$, we have that, for any $u\in L^2(V,\nu_G)$ and $\lambda>0$, $T_\lambda u\in M_w\lambda C$, where $M_w$ is the bound assumed on the weights $w_{xy}$.
\end{proof}

\subsection{Mass transport interpretation}\label{Sect.Mass.TransferW}

The mass transport interpretation of this second model is similar to the given in Subsection \ref{Sect.Mass.Transfer}  but using the metric $d_w$ defined as
$$ d_w(x,y) = \inf\left\{\displaystyle\sum_{i=1}^n \frac{1}{\sqrt{w_{x_{i-1}x_i}}}:\{x_1,x_2,...,x_n\} \hbox{ is a path connecting $x$ and $y$}\right\}
$$

\begin{remark}\label{rem45nui}\rm  Another energy functional  that can  be considered  is
$$\begin{array}{ll}
\widetilde{J^w_p}(u)\!\! &= \left\{\begin{array}{ll}\displaystyle\frac{1}{2p} \sum_{(x,y) \in V \times V} \left(w_{xy}\right)^{p-1} \vert \nabla  u(x,y) \vert^p    \quad &\hbox{if} \ u \in   L^2(V, \nu_G) \cap L^p(G,\nu_G) , \\[10pt] + \infty  &\hbox{if} \ u \in  L^2(V, \nu_G) \setminus L^p(G,\nu_G), \end{array} \right.
\end{array}
$$
which will be  related in the limit to the indicator function of
 $$\widetilde{K^w_{\infty}}:= \left\{ u \in   L^2(V, \nu_G), \ \vert \nabla u(x,y) \vert\leq \frac{1}{w_{xy}}  \quad \hbox{if} \ x \sim y\right\}.$$
The distance involved in this case for the mass transport interpretation is given by
$$\inf\left\{\displaystyle\sum_{i=1}^n \frac{1}{w_{x_{i-1}x_i}}:\{x_1,x_2,...,x_n\} \hbox{ is a path connecting $x$ and $y$}\right\},$$
usually used in the literature. $\Box$
\end{remark}

\subsection{Explicit solutions}\label{sect.explicitpesado}

 Let  us  see with a very simple example the different  dynamics of the two models.

\begin{example}{\rm Let us consider, the weighted graph $G=(V,E)$ with $V:= \{x_1,x_2,x_3 \}$, $E:= \{(x_1,x_2), (x_2,x_3) \}$ and  weights $w_{x_1x_2} =1$ and $w_{x_2x_3} = 4$ and zero  otherwise. We have $d_{x_1} = 1$, $d_{x_2} = 5$ and $d_{x_3} = 4$.

We take
as source the function
$$
    f(t,x) = \alpha \1_{\{x_2 \}}(x),
    \quad 0 < \alpha,
$$
and as initial datum
$$
    u_0 (x) = 0.
$$

 First, we give the solution of problem $P^G_{\infty}(u_0,f)$. Working as in the Examples of Subsection \ref{sect.explicit}, it is easy to see that, for small times, the solution to
$P_{\infty}^G(u_0,f)$ is given by
$$
u(t,x) =  \alpha t \, \1_{\{x_0 \}}(x), \quad \hbox{for} \ 0=t_0 \leq t < t_1 = \frac{1}{\alpha};
$$
and for later times:

 $$
   \begin{array}{ll}
   u(t,x) &= \left((n-1) +\frac{\alpha}{2}(t- t_n)\right) \1_{\{x_1 \}}(x) + \left(n +\frac{\alpha}{2}(t- t_n)\right) \1_{\{x_2 \}}(x) + \\[10pt] &+\left((n-1)+\frac{\alpha}{2}(t- t_n)\right) \1_{\{x_3 \}}(x), \end{array}
$$
   for $t_{n-1} \leq t \leq t_n:= \frac{1}{\alpha} + (n-1) \frac{2}{\alpha},$ for $n=1,2,3,...$.

   \medskip

  Let us now  to find the solution of problem $P^w_{\infty}(u_0,f)$.

    $\bullet$ First, for small times, the solution to
$P_{\infty}^w(u_0,f)$ is given by
$$
u(t,x) =  \alpha t \, \1_{\{x_0 \}}(x), \quad \hbox{for} \ 0=t_0 \leq t < t_1 = \frac{1}{2\alpha}.
$$
 Observe that, just from the beginning the solutions are different because the  slope condition $u\in K^w_{\infty}$ is different. Note that $t_1 = \frac{1}{2\alpha}$ is the first time when
$\displaystyle u(t,x) = \frac12$.

$\bullet$  For times greater than $t_1$ the support of the solution increases but only $x_3$ enters in the equation since $\displaystyle \frac{1}{\sqrt{w_{23}}}=\frac12<1=\frac{1}{\sqrt{w_{12}}}$.   The solution will have the form
$$
u(t,x) = \left(\frac12 + \widetilde\alpha(t)\right)  \1_{\{x_2\}}(x)+  \widetilde\alpha(t)  \1_{\{x_3 \}}(x),
$$
with $\widetilde\alpha(t_1)=0$. By the mass preservation principle we get
$$\widetilde\alpha(t)=\frac{5\alpha}{9}(t - t_1).$$
Hence,
$$
u(t,x) = \left(\frac12 + \frac{5\alpha}{9}(t - t_1)\right)  \1_{\{x_2\}}(x)+  \frac{5\alpha}{9}(t - t_1)  \1_{\{x_3 \}}(x),
$$
that belongs to $K^w_{\infty}$ for $t_1 \leq t < t_2 = t_1 + \frac{1}{2\frac{5\alpha}{9}}$. Note that $t_2$ is the first time when $u(t,x_3) = \frac12$.
We have
  $$\displaystyle  \int_{V} (f(t,x) - u_t (t,x)) (v(x) - u(t,x))\, d\nu_G(x)  =\frac{20\alpha}{9} \left(v(x_2) - v(x_3) - \frac12 \right)\leq 0$$
  for $v \in K^w_{\infty}$.

$\bullet$ For times greater than $t_2$, working similarly, we get that the solution to
$P_{\infty}^w(u_0,f)$ is given by
$$u(t,x) =  \frac{\alpha}{2}(t-t_2)  \1_{\{x_1 \}}(x) + \left(1 + \frac{\alpha}{2}(t-t_2) \right) \1_{\{x_2 \}}(x) + \left(\frac12 + \frac{\alpha}{2}(t-t_2) \right) \1_{\{x_3 \}}(x),$$
 for $t_2 \leq t < t_3:= t_2 + \frac{1}{\alpha}= \frac{12}{5\alpha}$.
 In fact, we can describe the solution at any time:
$$\begin{array}{lll}
u(t,x) = \left((n-2) \frac12 + \frac{\alpha}{2}(t-t_n) \right) \1_{\{x_1 \}}(x) + \left(n \frac12 + \frac{\alpha}{2}(t-t_n) \right) \1_{\{x_2 \}}(x) \\[10pt] + \left((n-1) \frac12 + \frac{\alpha}{2}(t-t_n) \right) \1_{\{x_3 \}}(x), \end{array}
$$
 for $t_{n-1}\le t\le t_n:= t_{n-1} + \frac{1}{\alpha}$ and $n = 2, 3,4,...$.

}
\end{example}

{\bf Acknowledgements.} The authors has been partially supported by:
\\
 Grant PID2022-136589NB-I00 funded by MCIN/AEI/10.13039/501100011033 and FEDER
\\
   Grant RED2022-134784-T funded by
MCIN/AEI/10.13039/501100011033.

\end{document}